\documentclass[11pt,leqno]{amsart}
\usepackage{amsmath}
\usepackage{amssymb}
\usepackage{enumerate}
\usepackage{multirow,array}
\usepackage{tablefootnote}
\usepackage{amsthm}
\usepackage{graphics,epsfig,color}

\textwidth 6.7in \textheight 9in \topmargin -0.5in
\parskip0cm
\oddsidemargin0cm \evensidemargin0cm

\def\ba{\begin{array}}
\def\ea{\end{array}}
\def\beq{\begin{equation}}
\def\eeq{\end{equation}}
\def\bea{\begin{eqnarray}}
\def\eea{\end{eqnarray}}
\def\beann{\begin{eqnarray*}}
\def\eeann{\end{eqnarray*}}

\def\dim{\textup{dim}}
\def\inte{\textup{int}}
\def\relint{\textup{relint}}

\setlength{\oddsidemargin}{0cm} \setlength{\evensidemargin}{0cm}
\setlength{\textwidth}{17cm} \setlength{\topmargin}{0cm}
\setlength{\textheight}{21cm}
\renewcommand{\arraystretch}{1.2}
\baselineskip = 12pt plus 1pt minus 1pt
\parindent = 1.4 true pc
\parskip = 11pt minus 1pt

\newtheorem{theorem}{Theorem}[section]
\newtheorem{lemma}{Lemma}[section]

\newtheorem{proposition}{Proposition}[section]
\newtheorem{definition}{Definition}[section]
\newtheorem{remark}{Remark}[section]
\newtheorem{example}{Example}[section]

\def\ba{\begin{array}}
\def\ea{\end{array}}
\def\beq{\begin{equation}}
\def\eeq{\end{equation}}
\def\bea{\begin{eqnarray}}
\def\eea{\end{eqnarray}}
\def\beann{\begin{eqnarray*}}
\def\eeann{\end{eqnarray*}}

\def\aff{\textup{aff}}
\def\lin{\textup{lin}}

\def\dim{\textup{dim}}
\def\dom{\textup{dom}}
\def\arg{\textup{arg}}

\def\arg{\textup{arg}}
\def\inte{\textup{int}}
\def\Pr{\textup{Pr}}
\def\relint{\textup{relint}}

\title[A Utility Theory Based Approach to Robustness]{\bf  A Utility Theory Based Interactive Approach to Robustness in Linear Optimization}

\author{Mehdi Karimi \and Somayeh Moazeni \and Levent Tun\c{c}el}
\date{\today}
\thanks{Mehdi Karimi: (m7karimi@uwaterloo.ca) Department of
Combinatorics and Optimization, Graduate Student, University
of Waterloo, Waterloo, Ontario N2L 3G1, Canada. Research of this
author was supported in part by a Discovery Grant from NSERC and by ONR
Research Grant N00014-12-10049.\\
Somayeh Moazeni: (smoazeni@stevens.edu)
School of Systems and Enterprises,
Stevens Institute of Technology, Babbio Center,
1 Castle Point Terrace on Hudson
Hoboken, NJ 07030, U.S.A.  Research of this
author was supported in part by Discovery Grants from NSERC.\\
Levent Tun\c{c}el: (ltuncel@uwaterloo.ca) Department of
Combinatorics and Optimization, Faculty of Mathematics, University
of Waterloo, Waterloo, Ontario N2L 3G1, Canada. Research of this
author was supported in part by Discovery Grants from NSERC and by ONR
Research Grant N00014-12-10049.}

\begin{document}

\begin{abstract}
We treat uncertain linear programming problems by utilizing the notion
of weighted analytic centers and notions from the area of
multi-criteria decision making.
After introducing our approach, we develop
interactive cutting-plane algorithms for robust optimization,
based on concave and
quasi-concave utility functions. 
In addition to practical advantages, due to the flexibility
of our approach, we are able to prove that under a theoretical framework due to Bertsimas and Sim \cite{price}, which establishes the existence of certain convex formulation of robust optimization problems, the robust optimal solutions
generated by our algorithms are at least as desirable to the decision maker
as any solution generated by many other robust optimization algorithms in the theoretical framework.
We present some probabilistic
bounds for feasibility of robust solutions
and evaluate our approach by means of
computational experiments.
\end{abstract}
\maketitle
\section{Introduction} \label{intro}
Optimization problems are widespread in real life decision making
situations. However, data perturbations as well as uncertainty in at least part of the
data are very difficult to avoid in practice.
Therefore, in most cases we have to deal with the reality that
some aspects of the data of the optimization problem at hand are
uncertain. This uncertainty is caused by many sources such as
forecasting, or approximations in the design of mathematical models,  or data approximation, or noise in measurements.
In order to handle optimization problems under
uncertainty, several techniques have been proposed. The most common, widely-known
approaches are
\begin{itemize}
\item{\textbf{Sensitivity analysis:}} typically, the influence of data uncertainty
is initially ignored, and then the obtained solution is justified/analyzed based on
the data perturbations \cite{bon}. 
\item{\textbf{Chance constrained programming:}} we use some stochastic models of
uncertain data to replace the deterministic constraints by their
probabilistic counterparts \cite{chance1,chance2,chance3,R-GO-6}. It is a natural way
of converting the uncertain optimization problem into a deterministic one.
However, most of the time the result is a computationally intractable
problem \cite{robust-book}. 
\item{\textbf{Stochastic programming:}} the goal is to find a solution that is feasible
for all (or almost all) possible instances of the data and to optimize
the expectation of some function of the decisions and the random variables \cite{sto-shapiro}.
\item{\textbf{Robust optimization:}} robust optimization is the method that is most closely related to our approach.
Generally speaking, robust optimization can be applied to any optimization
problem where the uncertain data can be separated from the problem's structure.
Having been heavily studied for convex optimization problems  \cite{robust-book, soyster,uncertain,jadid,rco, adjust, price,
bsdis,new, norm, mulvey}, robust optimization is also applicable to discrete \cite{R-GO-1, R-GO-2,R-GO-3} and more general nonconvex optimization problems \cite{ R-GO-7}. Robustness can be achieved by 
solving the robust counterpart or utilizing other unconventional methods such as simulated annealing algorithm \cite{R-GO-5}. Our focus
in this paper is on uncertain linear programming problems.
Uncertainty in the data means that the exact values of the data are
not known, at the time when the solution has to be determined. In
robust optimization framework, uncertainty in the data is described
through \emph{uncertainty sets}, which contain all
possible values that may be realized for the uncertain parameters.
Generally speaking, the distinction between robust optimization and stochastic programming 
is that robust optimization does not require the specification of the exact distribution. Stochastic 
programming performs well when the distributions of the uncertainties are exactly known, and 
robust optimization can be very useful when there is little information about those distributions. 
\end{itemize}
Since the interest in robust formulations was revived in the 1990s,
many researchers have introduced new formulations for robust optimization
framework in linear programming and general convex programming \cite{soyster,uncertain,jadid,rco, adjust,price,
bsdis,new, norm, mulvey}. Ben-Tal
and Nemirovski \cite{uncertain,jadid}  provided some of the first
formulations for  robust LP with detailed mathematical analysis.
 Bertsimas and Sim \cite{price} proposed an approach that offers
control on the degree of conservatism for every constraint as well as the objective function.
Bertsimas et al. \cite{norm} characterize the robust counterpart of an LP problem with
uncertainty set described by an arbitrary norm. By choosing appropriate norms,
they recover the formulations proposed in the above papers \cite{uncertain,jadid,norm}.

The goal of classical robust optimization is to find a solution that
is capable to cope best of all with $\emph{all}$ realizations of the
data from a given (usually bounded) uncertainty set \cite{robust-book,boydnemi}.
By the classical definition of robustness
\cite{robust-book,rco,new,ghaoui}, a \emph{robust optimal solution} is the solution of the following problem:
\begin{eqnarray}\label{classicp}
\ \ \max_{x\in{\mathbb R}^{n}} \left \{\inf_{\tilde{c}\in\mathcal C}
\langle \tilde{c}, x\rangle : \tilde{A}x\leq\tilde{b} \ , \forall
\tilde{b}\in\mathcal B, \forall
\tilde{A}\in\mathcal A \right\},
\end{eqnarray}
where $\mathcal C$, $\mathcal A$, and $\mathcal B$ are given uncertainty sets for
$\tilde{c}$, $\tilde{A}$, and $\tilde{b}$, respectively. Throughout this paper,
we refer to the formulation of \eqref{classicp} as \emph{classical robust formulation}.
\subsection{Some drawbacks of robust optimization}
Classical robust optimization is a powerful method to deal
with optimization problems with uncertain data, however, we can raise some criticisms.
One of the assumptions for robust
optimization is that the uncertainty set must be precisely specified
before solving the problem. Even if the
uncertainty is only in the right-hand-side, expecting the Decision Maker (DM) to construct accurately
an ellipsoid or even a hypercube for the uncertainty set may not always be reasonable.
Recently, a new approach has been proposed, called \emph{distributionally robust} optimization, that tries
to cover the gap between robust optimization and stochastic programming \cite{distrib, kuhn, shapiro-1}. In this approach, 
one seeks a solution that is feasible for the worst-case probability distribution in a set of possible distributions. In a recent paper, Shapiro \cite{shapiro-1} studied distributionally robust stochastic programming in a scenario that the uncertainty set of probability measures is ``close" to a reference measure. 
It is mentioned in \cite{kuhn} and also emphasized in a plenary lecture by Kuhn in ISMP2015 that, in real life applications, determining uncertainty sets precisely or determining safe operation probabilities accurately is at least very challenging.

Another main criticism of classical robust optimization is that
satisfying all of the constraints, if not make the problem infeasible, may
lead to an objective value very far from the optimal value of the nominal problem. This
issue is more critical for large deviations.
As an example, \cite{jadid,somayeh} considered some of the problems in the NETLIB library
(under reasonable assumptions on uncertainty of certain entries) and showed that
classical robust counterparts of most of the problems in NETLIB become infeasible for a small
perturbation. Moreover, in many other problems, objective value of the classical robust
optimal solution is very low and may be unsatisfactory for the decision
maker.

Several modifications of classical robust optimization have been introduced
to deal with this issue. One, for example, is \emph{globalized robust conterparts}
introduced in Section $3$ of \cite{robust-book}. The idea is to
consider some constraints as ``soft" whose violation can be tolerated
to some degree. In this method, we take care of what happens when the data leaves
 the nominal uncertainty set. In other words, we have ``controlled deterioration"
of the constraint. These modified approaches have more flexibility than the
classical robust methodology, but we have the problem that the modified robust
counterpart of uncertain problems may become computationally
intractable.
Although the modified robust optimization framework rectifies this
drawback to some extent, it intensifies the first criticism by putting
more pressure on the DM to specify
deterministic uncertainty sets before solving the problem.

Another criticism of classical robust optimization
is that it gives the same ``weight" to all the constraints.
In practice, this is not the case as some constraints may be more
important for the DM. There are some options in classical robust
optimization like changing the uncertainty set which again
intensifies the first criticism. We will see that our
approach can alleviate these difficulties.
\subsection{Contributions and overview of this paper}
We present a framework which allows a fine-tuning of the classical tradeoff between
robustness and conservativeness by the DM and engages DM continuously and in a more 
effective way throughout the optimization process. Under a suitable theoretical modeling 
setup, we prove that the classical robust optimization approach is a special case of our
framework. We demonstrate that it is possible to efficiently perform optimization under this
framework and finally, we illustrate some of our methods in our computational experiments.

One of the main contributions of this paper is the development of
cutting-plane algorithms for robust optimization
using the notion of weighted analytic centers
in a small dimensional weight-space. We also design algorithms
in the slack variable space as a theoretical stepping stone towards the more
applicable and more efficient weight-space cutting-plane algorithms.
Ultimately, we are proposing that our approach be
used in practice with a small number (say somewhere in the order of 1 to 20)
of \emph{driving factors} that really matter to the DM. These driving factors
are independent of the number of variables and constraints, and
determine the dimension of the weight space (for interaction with the DM).
Working in a low dimensional weight-space not only simplifies the interaction
for the DM, but also makes our cutting-plane algorithms more efficient.

The notion of moving across a weight space has been widely used in the area of multi-criteria decision making: when we have several competing objective functions to optimize, a natural approach is to optimize a weighted sum of them \cite{qeval}, \cite{hu-meh}.
Authors in \cite{qeval}  presented an algorithm for evaluating and ranking items with multiple attributes. \cite{qeval} is related to our work as the proposed algorithm is a cutting-plane one. However, our algorithm uses the concept of weighted analytic center which is completely different. Authors in \cite{hu-meh} proposed a family of models (denoted by McRow) for multi-expert multi-criteria decision making.  Their work is close to ours as they derived compact formulations of the McRow model by assuming
some structure for the weight region, such as polyhedral or conic descriptions. Our work also has fundamental differences with \cite{hu-meh}: cutting-plane algorithms in the weight-space find a weight vector $w$ in a fixed weight region (the unit simplex) such that the weighted analytic center
of $w$, say $x(w)$, is the desired solution for the DM.
The algorithms we design in this paper make it possible to implement the ideas we mentioned above to help overcome some
of the difficulties for robust optimization to reach a broader, practicing user base.
For some further details and related discussion, also see Moazeni \cite{somayeh} and Karimi \cite{Karimi}.

We introduce our formulation and the notations we use in the paper in Section \ref{notions}. In Section \ref{utility}, we explain our approach and prove that, under a theoretical framework due to Bertsimas and Sim \cite{price}, our approach is as least as strong as classical robust optimization. In this section, we also introduce the notion of weighted analytic centers.  In Section \ref{alg}, we design the cutting-plane algorithms, and explain some practical uses of our approach. Some preliminary computational results are presented in Section \ref{num}. In Section \ref{con}, we briefly talk about the extension of the approach to semidefinite programming and quasi-concave utility functions, and then conclude the paper.
\section{Formulation, notations, and assumptions} \label{notions}
Before introducing our approach in the next section, let us first explain some
of the assumptions and notations we are going to use.
Much of the prior work on robust linear programming addresses the
uncertainty through the coefficient matrix. Bertsimas and Sim
\cite{bsdis} considered linear programming problems in which all
data except the right-hand-side (RHS) vector is uncertain. In
\cite{rco,uncertain,new}, it is assumed that the uncertainty affects
the coefficient matrix and the RHS vector. Some papers
deal with uncertainty only in the coefficient matrix
\cite{jadid,price,norm}. Optimization problems in which all of the data
in the objective function, RHS vector and the
coefficient matrix are subject to uncertainty, have been considered
in \cite{adjust}. As we explain
in Section \ref{utility}, the nominal data and a rough outer approximation of the uncertainty set
are enough for our approach.  However, the structure of  uncertainty region is useful for the probability analysis.
In this paper, we deal with the general setup that any part of the data $(A,b,c)$ may be subject to uncertainty; however,
we handle the uncertainty in $A$ and $c$ by first pushing the objective function into the constraints, then in the new formulation (without an objective function), by pushing
all uncertainty into the RHS. Moreover, in at least some applications, 
the amount of uncertainty in $A$ is limited whereas the uncertainty in the RHS and the objective function vectors may be 
very significant. Some of the supporting arguments for this viewpoint are:
\begin{enumerate}
\item{Instead of specifying uncertainty for each local variable, we can handle the uncertainties by lumping them into some global variables. These global variables can be, for example,
the whole budget, human resources, availability of certain critical raw materials, government quotas, etc. It may be easier for the DM to specify the uncertainty
set for these global variables.
Then, we can approximate the uncertainty in the
coefficient matrix with the uncertainty in the RHS and the objective function.
In other words, we may fix the coefficient matrix on one of the samples from
the uncertainty set and then handle the uncertainty by introducing uncertainty
to the RHS vector as in \cite{Bert-inequal}.}
\item{A certain coefficient matrix  is typical for many real world problems.
In many applications of planning and network design
problems such as scheduling, manufacturing, electric utilities,
telecommunications, inventory management and transportation,
uncertainty might only  affect costs (coefficients of the objective function) and demands (the RHS vector)\cite{net1,net2,R-GO-4}. 
\textbf{Transportation systems:}
in some problems, the nodes and the arcs are fixed. However, the cost associated to each
arc is not known precisely.
\textbf{Traffic assignment problems:} in most models, we may assume that the
drivers have perfect information about the arcs and nodes. However, their route choice behavior
makes the travelling time uncertain.
 \textbf{Distribution systems:} in some applications, the locations
of warehouses and their
capacities (in inventory planning and distribution problems) are
well-known and fixed for the DM. However, the size of
orders and the demand rate for an item could translate to an
uncertain RHS vector. Holding costs, set up costs
and shortage costs, which affect the optimal inventory cost,
are also typically uncertain. These affect at least the objective function.
\textbf{Medical/health applications:} in these applications (see for instance, 
\cite{CZHS2006,BCTT2008,SEP2012,CM2013}) the DM may be a group of people
(including medical doctors and a patient) who are more comfortable with
a few, say 4-20, driving factors which may be more easily handled by the mathematical
model, if these factors could be represented as uncertain RHS values.

In the aforementioned applications, well-understood existing resources,
reliable structures (well-established street and road networks, warehouses, and machines
which are not going to change), and logical components of the formulation are translated into a certain coefficient matrix. The
data in the objective function and the RHS vector are
usually estimated by statistical techniques by the DM, or affected by
uncertain elements such as institutional, social, or economic
market conditions. Therefore, determining
these coefficients with precision is often difficult or practically
impossible. Hence, considering uncertainty in the objective function and the
RHS vector seems to be very applicable, and motivates
us to consider such formulation in LP problems separately.}
\item  Uncertainty when restricted to the RHS and the objective function is easier to handle mathematically, in probabilistic analyses as well as in sensitivity analyses. 
\end{enumerate}
 As explained above, we represent the objective function as a constraint  $\langle c,x \rangle \geq v$, where $v$
is a lower bound specified by the information from the DM.
For example, if the DM decides that the objective value must
not be below a certain amount, we can put $v$ equal to that value. Therefore, as the input for our approach, we need the nominal values of $A$ and $c$ that we denote by $A^{(0)}$ and $c^{(0)}$, largest realizable values of $b_i$s collected in $b^{(0)}$, and a lower bound $v$ on the objective value. 
In this paper, we prefer to work only with the slack variables.
For any feasible point $x$, we define the slack variables $\bar s=b^{(0)}-A^{(0)}x$ and $s_0=-v+\langle c^{(0)},x \rangle$.
The following LP program extracted from \eqref{classicp} is the 
framework of our algorithms:
\begin{eqnarray} \label{mainp-n-2}
\ \max    && s_{0} \\
\nonumber                    \text{s.t.}    && \langle -c^{(0)},x \rangle +s_{0}= -v, \\
\nonumber                    && A^{(0)}x + \bar s =  b^{(0)},\\
\nonumber                     &&  s:= ( s_{0}, \bar s)^\top  \geq 0.
\end{eqnarray}
Our algorithms are designed for the feasible region of \eqref{mainp-n-2} and we do not need the information about the uncertainty sets. However, in Appendix \ref{prob} where we show how the uncertainty sets can affect our solutions, we assume that all the uncertainty has been pushed into the RHS. 
In view of \eqref{mainp-n-2}, let us define
\[
A:= \left [ \begin{array}{c} -c^{(0)} \\ A^{(0)}  \end{array} \right ], \ \ b:= \left [ \begin{array}{c} -v \\ b^{(0)}  \end{array} \right ].
\]
From now on, we may assume that $A \in \mathbb R^{m \times n}$ and $b  \in \mathbb R^{m}$. 
Here, without loss of generality, we impose the following restrictions on the problem (for details, see \cite{somayeh}):
The matrix $A$ has full column rank, i.e., $\textup{rank}(A)=n\leq m$. The set $\{x\in \mathbb R^{n}: Ax \leq
b\}$ is bounded and has nonempty interior.
In this paper, vectors and matrices are denoted, respectively, by lower
and uppercase letters. The matrices $Y$ and $S$ represent diagonal
matrices, having the components of vectors $y$ and $s$ on their main
diagonals, respectively. The letters $e$ and $e_{i}$ denote a vector
of all ones and the $i$th
unit basis vector with the appropriate dimension, respectively. The rows of a
matrix are shown by superscripts of the row, i.e., $a^{(i)}$ is the
$i$-th row of the matrix $A$. The inner product of two vectors $a,b \in \mathbb R^n$ is
shown both by $\langle a,b\rangle$ and $a^{\top}b$. For a matrix $A$, we show the \emph{range} of $A$
with $\mathcal R(A)$ and the \emph{null space} of $A$ with $\mathcal N(A)$.

In the next section, we introduce our utility theory based approach and compare it to classical robust optimization. In order to use robust optimization efficiently, a tractable robust counterpart is needed for a problem with uncertainty. We introduce a general framework that covers many interesting robust counterparts in the literature, and then prove two theorems that show our approach is at least as general as this framework for classical robust optimization.
%

\section{A utility theory based interactive approach and weighted analytic centers} \label{utility}

\subsection{A utility theory based interactive approach}
Consider $A$ and $b$ defined in Section \ref{notions}.
Let us define $B_s :=\{b-Ax: \ Ax \leq b\}$ as the set of all feasible slack vectors. Then we can write \eqref{mainp-n-2} as
\begin{eqnarray} \label{main p-1}
\max && \ \ U(s) \nonumber \\
s.t. && \ s \in B_s,
\end{eqnarray}
where $U(s) := s_{0}$. This $U(s)$, which we denote as utility function,  is the simplest one that takes into account only maximizing the objective function. Intuitively, we can cover a huge class of problems by using more complicated utility functions in problem \eqref{main p-1}. In this paper, we try to solve \eqref{main p-1} for a general utility function \mbox{$U: \mathbb R^{m} \rightarrow \mathbb R $} 
that models all the preferences of the DM. We do \emph{not} have access to this utility function, however assume that, for a slack vector $s$,
we can ask the DM questions to extract some information about the function. In many applications, robustness of a solution may be a monotone function of the slack variables (this typically corresponds to quasi-concave utility function in our theoretical development); however, this kind of property of the utility function is not as restrictive in our approach as it may seem since we can also handle quasi-concave utility functions. We can also use modeling techniques from goal programming (see \cite{Ignizio1976}).  Assuming that $U(s)$ is concave or quasi-concave, we retrieve the supergradient of $U(s)$ at some points through a sequence of simple questions such as pairwise comparison questions  (see for instance
\cite{KeeneyRaiffa1976,Keeney1992,KWZ2012}).

Table \ref{table-compare} compares our utility theory based interactive approach with classical robust optimization on the input to the algorithm, interaction with the DM, and handling large scale problems.  
Note that our approach is different from the heavily studied \emph{Reinforcement Learning} \cite{reinforce-1,reinforce-2}. Reinforcement Learning is a method of using 
statistical techniques and dynamic programming to estimate an explicit utility function, whereas in our approach, we do not need an explicit formulation or even an estimate. 
Our
interactive approach has the additional
benefit that in case the DM is inconsistent in his/her answers, since
our approach is interactive and operates
with very local information, we can provide the DM with a better chance
of correcting mistakes as well as
\emph{learning} throughout the interactive process, what is possible
within the given constraints and preferences. 

\newcolumntype{P}[1]{>{\centering\arraybackslash}p{#1}}
\newcolumntype{M}[1]{>{\centering\arraybackslash}m{#1}}
\renewcommand{\arraystretch}{1.5}
\begin{center}
\begin{table}[h!]
\caption{Classical robust optimization versus our utility theory based approach.}
\label{table-compare}
\begin{tabular} { >\centering m{4cm}|m{6cm}|m{6cm} }
                     &    {\bf Classical Robust Opt.}    &   {\bf  Utility Theory Based Interactive Approach } \\
     \hline 
{\bf   Input }  &    Nominal values of $A$, $b$, and $c$. \newline  Uncertainty regions for $A$, $b$, and $c$, e.g., high-dimensional ellipsoids and/or intervals \tablefootnote{We quote an axiom for robust optimization from the book \cite{robust-book}: ``The decision maker is fully responsible for consequences of the
	decisions to be made when, and only when, the actual data is
	within the prespecified uncertainty set".  }.  & Nominal values of $A$ and $c$, a lower bound on the objective function $v$, and a suitable \tablefootnote{see problem \eqref{com-1} and the explanation before equation \eqref{mainp-n-2}.} vector $b^{(0)}$. \\
     \hline
  {\bf  Communication with the DM }   &   Once at the modeling phase in receiving uncertainty regions. Once at the end, delivering the robust optimal solution.    &   Interactive throughout the whole optimization process. \\
  
 \hline 

 \multirow{2}{3cm}{{\bf \\ Handling large scale problems } }  & Large scale optimization techniques can be used for the robust counterpart.   &   In addition to large scale optimization techniques, a driving factor idea can be used to drastically reduce the dimension of search space and communication space for the DM.  \\
                    &    Specifying the uncertainty region at one shot becomes even harder as size grows.  & Connection between this small problem and the original problem requires an expert.  
\end{tabular}
\end{table}
\end{center}
In the rest of this subsection, we prove that, under a general theoretical framework, the solutions
generated by our algorithms are at least as desirable to the DM
as the solutions generated by many other robust optimization algorithms. The solution that a robust optimization technique returns is an optimal solution of a tractable robust counterpart for the LP problem with uncertainty. In the first theorem, we prove that given any optimal solution $x^*$ of a classical robust optimization problem, there exists a concave utility function $U$ such that the problem
\begin{eqnarray} \label{com-0}
\max  && g(x) := U(b-Ax) \nonumber \\
\text{s.t.}  && a_i^{\top}x \leq b_i, \,\,\, i\in \{1,\ldots,m\}.
\end{eqnarray}
has a unique optimal solution $x^*$.  We emphasize again that problem \eqref{main p-1} is not explicitly available. Next, we prove that a proper utility function always exists. 

\noindent Many classical robust optimization models and their approximations can be written as follows
\begin{eqnarray} \label{com-1}
\ \max    && c^{\top}x \\
\nonumber                    \text{s.t.}  && a_i^{\top}x + f_i(x) \leq b_i, \,\,\, i\in \{1,\ldots,m\},
\end{eqnarray}
where $f_i(x)$, $i\in \{1,\ldots,m\}$, is a convex function such that $f_i(x) \geq 0$ for all feasible $x$. If the convex uncertainty set ${\mathcal A}_i$ is known for each $i\in \{1,\ldots,m\}$ and $a_i \in {\mathcal A}_i$, then we have \mbox{$f_i(x):= \sup_{\tilde a \in {\mathcal A}_i} \tilde a ^\top x - a_i^\top x$}. By changing $f_i(x)$, different formulations can be derived.
In the following we mention some examples. Assume that for each entry $A_{ij}$ of matrix $A$ we have $A_{ij} \in [a_{ij}-\hat{a}_{ij},a_{ij}+\hat{a}_{ij}]$. It can easily be seen \cite{price} that the classical robust optimization problem is equivalent to \eqref{com-1} for $f_i(x)=\hat{a}_i^{\top} |x|$.
For the second example, assume that $A \in \{A \ : \, \left\| M(\textup{vec}(A)-\textup{vec}(\bar{A})) \right\| \leq \Delta \}$ for a given $\bar{A}$ where $\left\| . \right\|$ is a general norm and $M$ is an invertible matrix. $\textup{vec}(A)$ is a vector in $\mathbb R ^{mn \times 1}$ created by stacking the columns of $A$ on top of one another.  It is proved in \cite{norm} that many approximate robust optimization models can be formulated by changing the norm. It is also proved in \cite{norm} that this robust optimization model can be formulated as \eqref{com-1} by $f_i(x)=\Delta \left\| M^{-T} x_i \right\| _{*}$, where $\left\| . \right\|_*$ is the dual norm and $x_i \in \mathbb R ^{mn \times 1}$ is a vector that contains $x$ in entries $(i-1)n+1$ through
$in$, and zero everywhere else. 

\noindent Now, utilizing Karush-Kuhn-Tucker (KKT) theorem, we establish the following theorem.

\begin{theorem}   \label{thm::compare-1}
Assume that \eqref{com-1} has Slater points. Then, for every optimal solution $x^*$ of \eqref{com-1}, there exists
a concave function $g(x)$ (or equivalently $U(s)$) such that $x^*$ is the unique solution of \eqref{com-0}.
\end{theorem}
\begin{proof}
 For the optimality condition of \eqref{com-1} we have: There exists $\lambda \in \mathbb R^m_+$ such that
\begin{eqnarray} \label{com-2}
&&c-\sum_{i=1}^m \lambda_i (a_i+\nabla f_i(x))=0  \nonumber \\
&&\lambda_i(a_i^{\top}x + f_i(x)-b_i)=0,  \ \ \ i\in \{1,\ldots,m\}.
\end{eqnarray}
Since the Slater condition holds for \eqref{com-1}, optimality conditions \eqref{com-2} are necessary and sufficient. Let $x^*$ be an optimal solution of \eqref{com-1}, and let $J\subseteq \{1,\ldots,m\}$ denote the set of indices for which
$\lambda_i \neq 0, i \in J$. Let us define $g(x)$ as follows:
\begin{eqnarray} \label{com-3-2}
g(x):=c^{\top}x+ \sum_{i\in J} \mu_i \ln(b_i+t_i-a_i^{\top}x - f_i(x)),
\end{eqnarray}
where $t_i>0, i \in J$, are arbitrary numbers. We claim that $g(x)$ is concave. $b_i+t_i-a_i^{\top}x - f_i(x)$ is a concave function and $\ln(x)$ is increasing concave, hence $\ln(b_i+t_i-a_i^{\top}x - f_i(x))$ is a concave function for $ i\in \{1,\ldots,m\}$. $g(x)$ is the summation of an affine function and some concave functions and so is concave. The gradient of $g(x)$ is
\begin{eqnarray} \label{com-3-3}
\nabla g(x)= c- \sum_{i\in J} \frac{\mu_i}{b_i+t_i-a_i^{\top}x - f_i(x)} (a_i+\nabla f_i(x)).
\end{eqnarray}
Now define $\mu_i,  i \in J$, as
\begin{eqnarray} \label{com-4-2}
\mu_i := \lambda_i \left[b_i +t_i-a_i^{\top}x^* - f_i(x^*)\right].
\end{eqnarray}
Using \eqref{com-4-2} and comparing \eqref{com-3-3} and \eqref{com-2}, we conclude that $x^*$ is a solution of \eqref{com-0}, as we wanted. Uniqueness is because $g(x)$ is strictly convex. 
\end{proof}
The above argument proves the existence of a suitable utility function. A related, purely theoretical question is that can we construct such a utility function without having a solution of \eqref{com-2}? In the following, we construct a function with objective value arbitrarily close to the objective value of \eqref{com-1}. Assume that strong duality holds for \eqref{com-1}. Let us define
$g(x):=c^{\top}x+ \mu \sum_{i=1}^m \ln(b_i-a_i^{\top}x - f_i(x))$ and assume that $\hat{x}$ is the maximizer of $g(x)$. We have
\begin{eqnarray} \label{com-5}
\nabla g(\hat{x})= c-  \sum_{i=1}^m \frac{\mu}{b_i-a_i^{\top} \hat{x} - f_i(\hat{x})} (a_i+\nabla f_i(\hat{x})) =0.
\end{eqnarray}
This means that $\hat{x}$ is the maximizer of the Lagrangian of the problem in \eqref{com-1}, $L(\lambda,x)$, for \\
\mbox{$\hat{\lambda}_i:=\mu / (b_i-a_i^{\top} \hat{x} - f_i(\hat{x}))$}, $i \in \{1,\ldots,m\}$. So by strong duality, we have
\begin{eqnarray} \label{com-6}
c^{\top}x^* \leq L(\hat{\lambda},\hat{x}) &=& c^{\top}\hat{x} + \sum _{i=1}^m \frac{\mu}{b_i-a_i^{\top} \hat{x} - f_i(\hat{x})} (b_i-a_i^{\top} \hat{x} - f_i(\hat{x})) \nonumber \\
&=& c^{\top}\hat{x} + m\mu.
\end{eqnarray}
\eqref{com-6} shows that by choosing $\mu$ small enough, we can construct $g(x)$ such that the optimal objective value of \eqref{com-0} is arbitrarily close to the optimal objective value of \eqref{com-1}.

Note that many other approaches to robust optimization and decision making under uncertainty
(including the generalized robust counterpart introduced by Ben-Tal and
Nemirovski \cite{robust-book}, and the approach
of Iancu and Trichakis \cite{IancuTrichakis2013} using the notion of
pareto robust optimization) can be included as a special case of our framework.  A good starting
point to prove the existence of a utility function is to start with indicator functions of sets
encoding feasibility conditions. This approach first leads to utility functions that are not continuous;
however, as we showed above, these functions can be smoothed by use of barriers which then lead to
differentiable utility functions with desired properties.

To illustrate the above points, we can prove the stronger version (from the viewpoint of optimal solution sets) of Theorem \ref{thm::compare-1}.  
\begin{theorem} \label{thm::compare-2}
Assume that \eqref{com-1} has Slater points. Then, there exists
a concave function $g(x)$ (or equivalently $U(s)$) such that the sets of optimal solutions of \eqref{com-0} and  \eqref{com-1} are the same.
\end{theorem}
\begin{proof}
Note that because $f_i(x) \geq 0$, $i\in \{1,\ldots,m\}$, for all feasible points $x$, the feasible region of \eqref{com-1} is a subset of the feasible region of \eqref{com-0}. As the objective function of \eqref{com-1} is linear, the set of optimal solutions of \eqref{com-1}, denoted by $X_{opt}$, is a convex set. For an obvious choice, if we define the concave function
\begin{eqnarray*}
g(x):= \left \{ \begin{array}{rl}
	0   &   x \in X_{opt} \\
	-\infty  &  o.w.
\end{array} \right.,
\end{eqnarray*}
then \eqref{com-0} has the same set of optimal solutions as \eqref{com-1}. To show that there exists a continuous concave function, let us assume $X_{opt}$ is represented by the following system
\begin{eqnarray*}
X_{opt}=\left\{x \in \mathbb R^n:  \bar A x=\bar b, \ h_i(x) \leq 0, i\in \{1,\ldots,q\} \right\},
\end{eqnarray*}
where $\bar Ax=\bar b$ defines an affine subspace, and $h_i(x)$, $i\in \{1,\ldots,q\}$, are continuous convex functions. Consider the following function:
\begin{eqnarray*}
g(x):= \min \left\{ 0, -h_1(x),\cdots,-h_q(x) ,-\|\bar A x -\bar b\|^2\right\}.
\end{eqnarray*}
This function is concave because it is the minimum of concave functions. The maximum of the function is 0 and is achieved only on $X_{opt}$. Therefore,  the sets of optimal solutions of \eqref{com-0} and  \eqref{com-1} are the same.
\end{proof}
We can make a connection between the feasible slack vectors of an LP and the notion of weighted-analytic-centers. There are strong justifications for using weight space ($w$-space) instead of $s$-space that we will see when we design the algorithms. Besides, by using the notion of weighted center, we benefit from differentiability and smoothness of our functions in our formulations. Weight-space and weighted-analytic-centers approach embeds a ``highly differentiable" structure into the
algorithms. Such tools are extremely useful in both the theory and applications of optimization. In contrast, classical robust optimization and other competing techniques usually end up delivering a final solution where
differentiability cannot be expected; this happens because their potential optimal solutions located on the boundary of the domain of some of the structures defining the problem. 

\subsection{Definition of weighted center}  \label{weight-0}
For every $i\in\{1,2,\ldots,m\}$, let $\mathcal F_{i}$ be a closed
convex subset of $\mathbb R^{n}$ such that $\mathcal
F:=\bigcap_{i=1}^{m} \mathcal F_{i}$ is bounded and has nonempty
interior.\\
Let $F_{i}:\textup{int}(\mathcal F_{i})\rightarrow \mathbb R$ be a
self-concordant barrier for $\mathcal F_{i}$, $i\in\{1,2,\ldots,m\}$ (For a definition of self-concordant barrier functions see \cite{NN}).
For every $w\in \mathbb R_{++}^{m}$, we define the $w$-center of
$\mathcal F$ as
$$
\text{arg}\min\left\{\sum_{i=1}^{m}w_{i}F_{i}(x):x\in \mathcal
F\right\}.
$$
Consider the special case when each $\mathcal F_{i}$ is a closed half-space
in $\mathbb R^{n}$. Then the following result is well-known (see for example \cite{kojima-book,somayeh,Monteiro, Anderson}).
\begin{theorem}\label{w}
Suppose for every $i\in\{1,2,\ldots,m\}$, $a^{(i)}\in\mathbb
R^{n}\setminus\{0\}$ and $b_{i}\in\mathbb R$ are given such that:
$$
\mathcal F:=\left\{x\in\mathbb R^{n}: \langle a^{(i)},x\rangle \leq
b_{i}, \forall i\in\{1,2,\ldots,m\}\right\},
$$
is bounded and $\textup{int}(\mathcal F)$ is nonempty. Also, for
every $i\in\{1,2,\ldots,m\}$ define \mbox{$F_{i}(x):=-\ln(b_{i}-\langle
a^{(i)},x\rangle)$}. Then for every $w\in \mathbb R_{++}^{m}$, there
exists a unique $w$-center in the interior of $\mathcal F$, $x(w)$.
Conversely, for every $x\in \textup{int}(\mathcal F)$, there exists
some weight vector $w(x)\in\mathbb R_{++}^{m}$ such that $x$ is the
unique $w(x)$-center of $\mathcal F$.
\end{theorem}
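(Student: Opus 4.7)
The plan is to handle the two directions separately, with both parts resting on properties of the function $\phi_w(x) := \sum_{i=1}^{m} w_i F_i(x) = -\sum_{i=1}^{m} w_i \ln(b_i - \langle a^{(i)}, x\rangle)$ on $\textup{int}(\mathcal F)$.

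For the forward direction, I would first observe that each $-\ln(b_i - \langle a^{(i)}, x\rangle)$ is convex on the open halfspace $\{x : \langle a^{(i)}, x\rangle < b_i\}$, so $\phi_w$ is convex on $\textup{int}(\mathcal F)$ for any $w \in \mathbb R^m_{++}$. To upgrade this to strict convexity, I would use that $\mathcal F$ is bounded, which forces the vectors $a^{(1)}, \dots, a^{(m)}$ to span $\mathbb R^n$; combined with $w_i > 0$ for all $i$, the Hessian $\nabla^2 \phi_w(x) = \sum_i \frac{w_i}{(b_i - \langle a^{(i)}, x\rangle)^2} a^{(i)} (a^{(i)})^\top$ is positive definite. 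Next, since $w_i > 0$ for every $i$, $\phi_w(x) \to +\infty$ as $x$ approaches any boundary point of $\mathcal F$ (some $b_i - \langle a^{(i)}, x\rangle$ drops to $0$). Because $\textup{int}(\mathcal F)$ is nonempty and bounded and $\phi_w$ is continuous, strictly convex, and coercive relative to this domain, $\phi_w$ attains its infimum at a unique point $x(w) \in \textup{int}(\mathcal F)$, giving the $w$-center.

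For the converse, I would start from the first-order optimality condition for a $w$-center. A point $x \in \textup{int}(\mathcal F)$ is the $w$-center iff $\nabla \phi_w(x) = 0$, that is,
\begin{equation*}
\sum_{i=1}^{m} \frac{w_i}{b_i - \langle a^{(i)}, x \rangle}\, a^{(i)} \;=\; 0.
\end{equation*}
Writing $s_i(x) := b_i - \langle a^{(i)}, x\rangle > 0$ and $y_i := w_i / s_i(x)$, this is just $A^\top y = 0$ with $y > 0$. So the task reduces to exhibiting \emph{some} $y \in \mathbb R^m_{++}$ with $A^\top y = 0$; given such $y$, I set $w_i := y_i s_i(x) > 0$, and uniqueness of $x$ as the $w$-center follows from the strict convexity already established.

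The crux, and the step I expect to be the main obstacle, is producing this strictly positive vector in $\mathcal N(A^\top)$. I would invoke a theorem of the alternative (Stiemke's lemma applied to $A^\top$): either (I) there exists $y > 0$ with $A^\top y = 0$, or (II) there exists $d$ with $Ad \leq 0$ and $Ad \neq 0$. Alternative (II) would give a nonzero direction $d$ (using $\textup{rank}(A) = n$ to exclude $d \neq 0$ with $Ad = 0$) along which $\mathcal F$ is unbounded, contradicting the hypothesis that $\mathcal F$ is bounded. Hence alternative (I) holds, which completes the argument and closes the converse. This theorem-of-alternatives step is the only nontrivial ingredient; everything else is smoothness, strict convexity, and boundary behavior of the logarithmic barrier.
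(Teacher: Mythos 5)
Your proposal is correct. The paper does not actually prove Theorem \ref{w}; it states it as well known and, in the discussion immediately following, only sketches the forward direction in essentially the same way you do: strict convexity of the weighted barrier on its domain plus blow-up of the objective along sequences approaching the boundary, giving existence and uniqueness of an interior minimizer. (Your justification of strict convexity --- boundedness of $\mathcal F$ forces $a^{(1)},\dots,a^{(m)}$ to span $\mathbb R^n$, hence the Hessian $\sum_i \frac{w_i}{s_i^2}a^{(i)}(a^{(i)})^{\top}$ is positive definite --- is exactly the missing detail, and your implicit use of boundedness again to keep the non-vanishing barrier terms bounded below is fine.) For the converse, the paper merely points at the primal--dual system $Ax+s=b$, $A^{\top}y=0$, $Sy=w$ and asserts that any interior $x$ arises as a $w$-center; it never produces the strictly positive $y\in\mathcal N(A^{\top})$ that this requires. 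Your Stiemke's-lemma argument supplies precisely that ingredient: alternative (II) yields a recession direction of $\mathcal F$, contradicting boundedness, so a strictly positive null-space vector exists and $w_i:=y_i s_i(x)$ works. One cosmetic remark: the parenthetical ``using $\rank(A)=n$ to exclude $d\neq 0$ with $Ad=0$'' is unnecessary, since alternative (II) already gives $Ad\neq 0$ and hence $d\neq 0$; nothing in the argument depends on it.
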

Define the following family of convex optimization problems:
\begin{eqnarray}
\begin{array}{cc}
\min  & -\sum_{i=1}^{m}w_{i}\ln(s_{i})\\  
\text{s.t.}& Ax+s=b
\end{array}   \ \ \ \ \ \ 
\begin{array}{cc}
\min  & \langle b,y\rangle-\sum_{i=1}^{m}w_{i}\ln(y_{i})\\  
\text{s.t.}& A^{\top}y=0
\end{array}
\end{eqnarray}
For every
weight vector $w>0$, the objective functions of the above problems
are strictly convex on their domains. Moreover, the objective
function values tend to $+\infty$ along any sequence of their
interior points (strictly feasible points), converging to a
point on their respective boundary.
So, the above problems have minimizers in the interior of their
respective feasible regions. Since the objective functions are
strictly convex, the minimizers are unique. Therefore, for every
given $w>0$, the above problems have unique solutions $(x(w),s(w))$
and $y(w)$. 
If we write the optimality conditions for both problems, there exist $\bar y \in \mathbb R^m$ for the first problem and  $\bar s \in \mathbb R^m, \bar x \in \mathbb R^n$ for the second problem such that 
 \begin{eqnarray}
\begin{array}{c}
 Ax(w)+s(w)=b  \\
 A^\top \bar y=0\\
 S(w) \bar y = w, 
\end{array}   \ \ \ \ \ \ 
\begin{array}{c}
A \bar x+\bar s=b\\
 A^{\top}y(w)=0 \\
 \bar S y(w)=w. 
\end{array}
\end{eqnarray}
By the above uniqueness discussion, we must have $\bar y =y(w)$, $\bar s=s(w)$, and $\bar x=x(w)$, and these two systems are actually the same. 
These solutions can be used to define many \emph{primal-dual
weighted-central-paths} as the solution set
$\{(x(w),y(w),s(w)):w > 0\}$ of the following system of equations and
strict inequalities:
\begin{eqnarray}\label{analytic center}
&& Ax+s=b,\ s>0,\\
\nonumber && A^{\top}y=0,\\
\nonumber && Sy=w,
\end{eqnarray}
When we set $w:= te$, $t > 0$, we obtain the usual primal-dual
 central-path. Figure \ref{Fig-PD} illustrates some weighted central paths.
\begin{figure}[ht]
\centering
\includegraphics[height=3.2in, width = 3.5in]{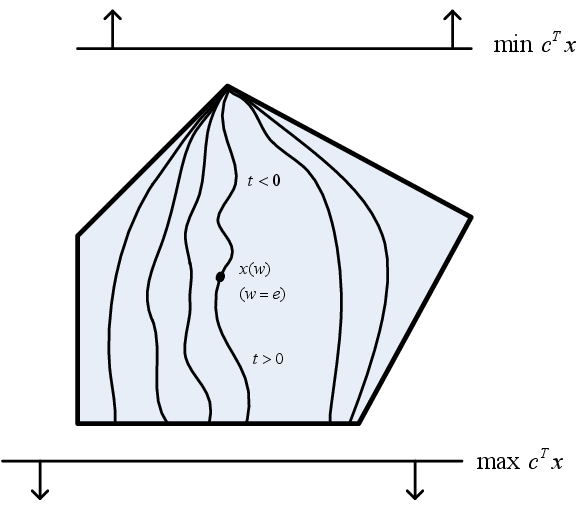}
\caption {Primal-dual central paths.}
\label{Fig-PD}
\end{figure}

For every given weight vector $w$,
$(x(w),y(w),s(w))$ is obtained uniquely from \eqref{analytic center} and $x(w)$ is called the \emph{weighted
center} of $w$. We may also refer to $(x(w),y(w),s(w))$ as the weighted
center of $w$.  For every given
$x \in \mathbb R^n$ and $y \in \mathbb R^m$, $y>0$, that satisfy the above system, $w$ and $s(w)$
are obtained uniquely. However, for a given $x \in \mathbb R^n$, there
are many weight vectors $w$ that may give $x$ as the $w$-center of the
corresponding polytope.
Without loss of generality, we restrict ourselves to the weights on the unit simplex, i.e., we consider weighted center $({x},{y},{s})$ corresponding  to weight vectors ${w}$ such that $\sum_{i=1}^m {w}_i =1$. We call this simplex of weight vectors $W$:
$$
W:=\{w \in \mathbb{R}^m : w >0, \ e^{\top}w=1\}.
$$
We will show that weight vectors in $W$ are enough to represent the feasible region (a special case can be ${w}=\frac{1}{m}e$).
We define the following notion for future reference:
\begin{definition} \label{centric}
A vector $s\in \mathbb R^m$ or $y \in \mathbb R^m$ is called \emph{centric} if there exists $x$ such that $(x,y,s)$ satisfies \eqref{analytic center} for a weight vector $w > 0$ where $e^{\top}w=1$.
\end{definition}

Let $s$ and $y$ be centric. First, we note that the simplex of the weight vectors can be divided into regions of constant $y$-vector  ($W_y$) and constant $s$-vector ($W_s$).
\begin{eqnarray}
W_y:= \{ w \in W: \ y(w)=y \}, \ \ \ W_s:=\{w \in W: \ s(w)=s\}.
\end{eqnarray}
By using Lemma \ref{W1}, if $(\hat{x},\hat{y},\hat{s})$ is the solution of system \eqref{analytic center} corresponding to the weight vector $\hat{w} \in W$, and $\bar{y}>0$ is any centric $y$-vector, then $(\hat{x},\bar{y},\hat{s})$ is the solution of system \eqref{analytic center} corresponding to the weight vector $\bar Y (\hat Y)^{-1} \hat{w}$. This means that for every centric vector $\hat{s}$ and any centric vector $y$, $\hat{S}y$ is a weight vector in the simplex.

For every pair of centric vectors $s$ and $y$, $W_s$ and $W_y$ are convex. To see this, let  $(x,\bar{y},s)$ and $(x,y,s)$ be the weighted centers of $\hat{w}$ and $w$. Then, it is easy to see that for every $\beta \in [0,1]$, $(x,\beta\bar{y}+(1-\beta) y,s)$ is the weighted center of $\beta \hat{w}+(1-\beta) w$. 
Various properties of $W_s$ and $W_y$ are studied in Appendix \ref{appendix-1}, but the following simple examples make the geometry of $W_s$ and $W_y$ clearer. We present two examples with $m=3, \ n=1$.
\begin{example} \label{Wsy}
 For the first example, let $b:=[1 \ 0 \ 0]^{\top}$ and \mbox{$A:=[1 \ -1 \ -1]^{\top}$}. By using \eqref{analytic center}, the set of centric $s$-vectors is $B_s=\{[(1-x), \ x, \ x]^{\top}: x\in(0,1)\}$. The set of centric $y$-vectors is specified by solving $A^{\top}y=0$ and $b^{\top}y=1$, while $y>0$. We can see that in this example, as shown in Figure \ref{Fig2}, $W_s$ regions are parallel line segments while $W_y$ regions are line segments which all intersect at $[1 \ 0 \ 0]^{\top}$.
\begin{figure}[ht]
\centering
\includegraphics[height=3in, width = 3.1in]{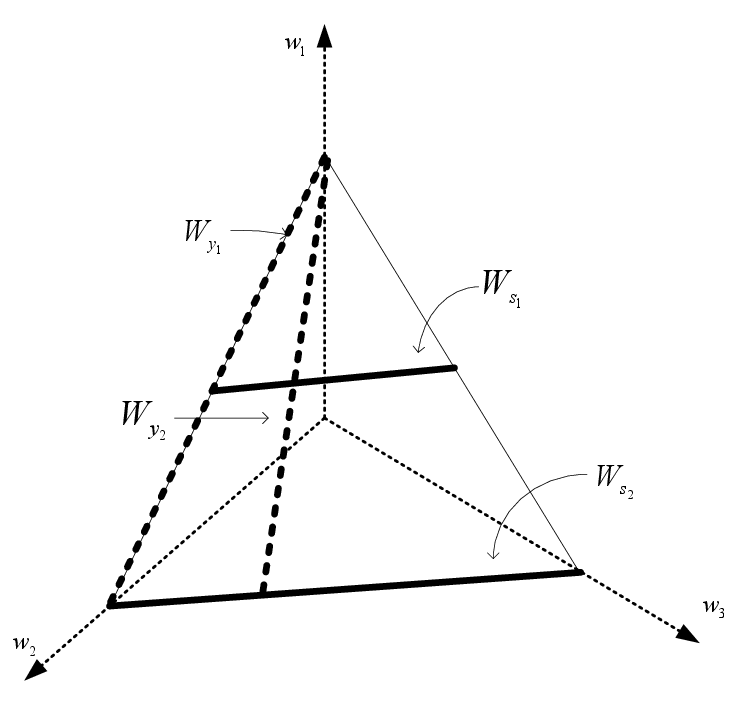}
\caption {$W_s$ and $W_y$ regions for the first example in Example \ref{Wsy}.}
\label{Fig2}
\end{figure}
For the second example, let $A:=[1 \ -1 \ 0]^{\top}$ and $b:=[1 \ 0 \ 1]^{\top}$. $W_s$ and $W_y$ regions are shown in Figure \ref{Fig3} derived by solving \eqref{analytic center}. As can be seen, this time $W_y$ regions are parallel line segments and $W_s$ regions are line segments which intersect at the point $[0 \ 0 \ 1]^{\top}$.
\begin{figure}[ht]
\centering
\includegraphics[height=3in, width = 3.1in]{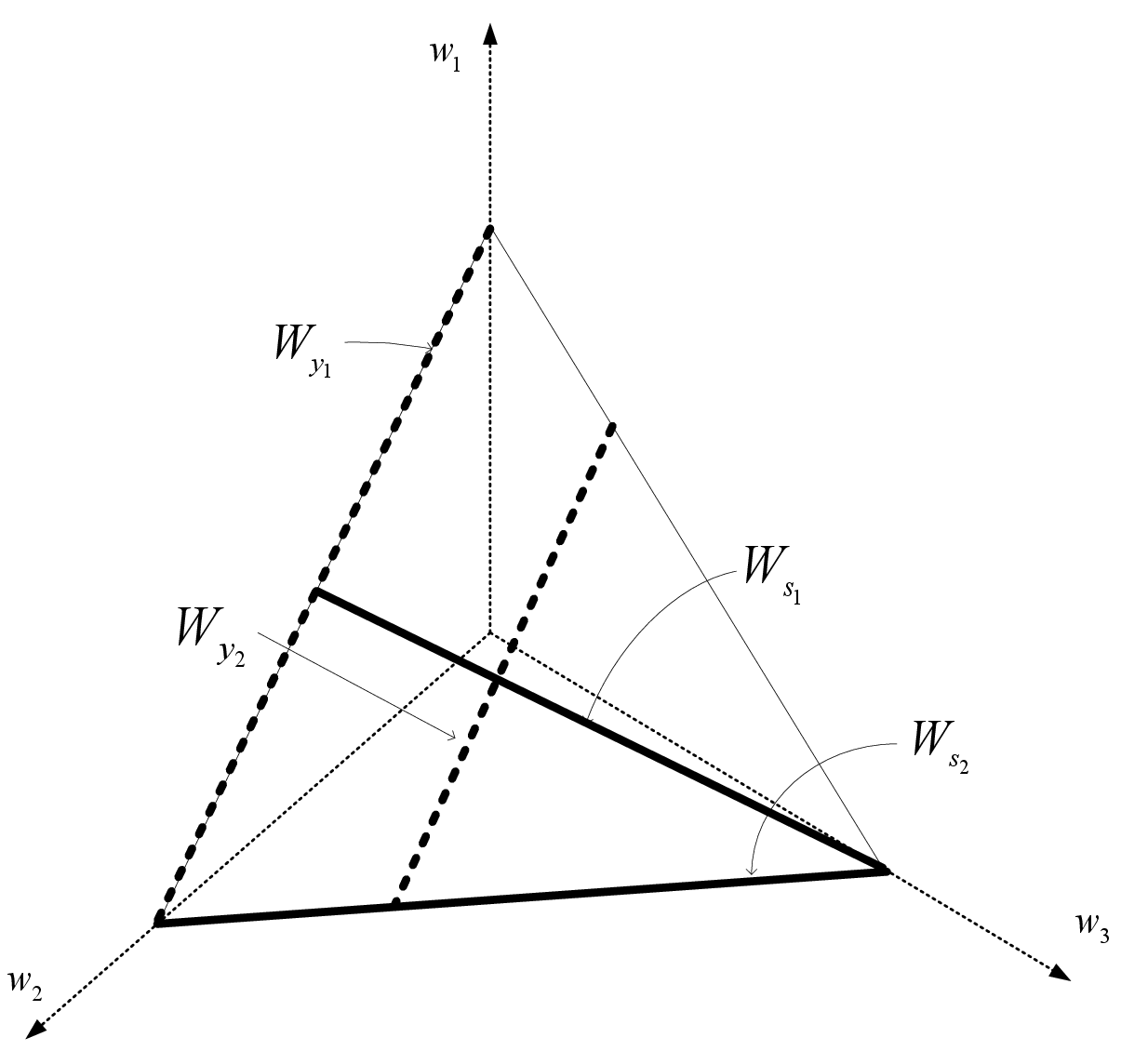}
\caption {$W_s$ and $W_y$ regions for the second example in Example \ref{Wsy}.}
\label{Fig3}
\end{figure}
\end{example}
These examples show that the affine hulls of $W_{y^1}$ and $W_{y^2}$ might not intersect for two centric $y$-vectors $y^1$ and $y^2$. This is also true for the affine hulls of $W_{s^1}$ and $W_{s^2}$ for two centric $s$-vectors $s^1$ and $s^2$.

\begin{example} \label{Wsy-2}
For the second example, let $A:=[3 \ -3 \ -2]^{\top}$ and $b:=[1 \ 1 \ 0]^{\top}$. $W_s$ and $W_y$ regions are shown in Figure \ref{Fig-lastexample}, derived by solving \eqref{analytic center}. In this example, none of
$W_y$ regions, $W_s$ regions, or their affine hulls intersect in a single point.
\begin{figure}[ht]
\centering
\includegraphics[height=3in, width = 3.1in]{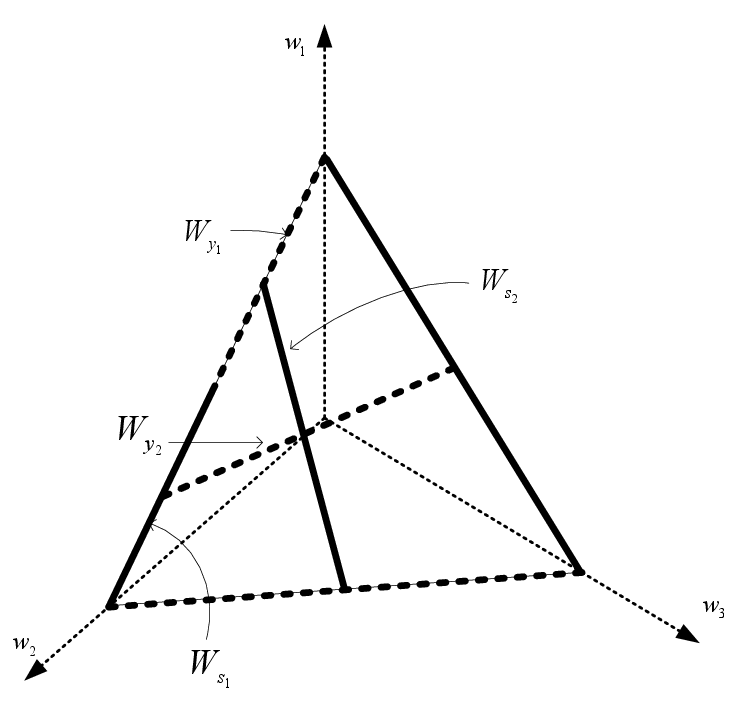}
\caption {$W_s$ and $W_y$ regions for Example \ref{Wsy-2}.}
\label{Fig-lastexample}
\end{figure}
\end{example}

\section{Algorithms} \label{alg}

In this section, we develop some cutting-plane algorithms which find an optimal solution for the DM, using the facts we established in the previous sections.
As we mentioned in Section \ref{utility},  we assume that the DM's
 preferences, knowledge, wisdom, expertise, etc. can be modeled in principle by
a utility function (as a function of the slack variables $s$),
i.e., $U(s)$, and our problem is to maximize this utility function
over the set of centric (Definition \ref{centric}) $s$-vectors $B_s$.
(Of course, we do not assume to have access to this function $U$, except
through our limited interactions with the DM.)
Therefore, our problem becomes
\begin{eqnarray} \label{main p}
\max && \ \ U(s) \nonumber \\
s.t. && \ s \in B_s.
\end{eqnarray}
In the following, we denote an optimal solution of \eqref{main p} with $s^{opt}$. In many applications, it is possible to capture choices with concave, quasi-concave, or nondecreasing utility functions. We are going to start with the assumption of concave $U(s)$. We will see in Subsection \ref{quasi-concave} that the algorithm can easily be refined to be used for quasi-concave functions. 

Suppose we start the algorithm from a point $w^0 \in \mathbb R^m$	with the corresponding $s$-vector $s^0 \in \mathbb R^m$. Using the idea of supergradient, we can introduce cuts in the $s$-space or $w$-space to shrink the set of $s$-vectors or $w$-vectors, such that the shrunken space contains an optimal point. In the following subsections, we discuss these algorithms in the $s$-space and in the $w$-space. Our main algorithm is the one in the $w$-space, however, the $s$-space algorithm helps us understand the other better.

Our algorithms are based on the notion of supergradient of a concave function. Therefore, before stating the
algorithms, we express a summary of the results we want to use.  These properties are
typically proven for convex functions in the literature \cite{rockafellar,convex-boyd}, however
we can translate all of them to concave functions.
It is a well-known fact that
for a concave function $f :  \mathbb R^n \rightarrow \mathbb R$, any local maximizer is also a global maximizer.
If a strictly concave function attains its global maximizer, it is unique. The following theorem is fundamental for developing our cutting-plane algorithms.
\begin{theorem} \label{T2}
Assume that $f :  \mathbb R^n \rightarrow \mathbb R$ is a concave function and let $x^0 \in \relint(\dom f)$. Then there exists $g \in \mathbb R^n$ such that
\begin{eqnarray} \label{super_def}
f(x) \leq f(x^0) + g^{\top}(x-x^0),  \ \ \ \forall  x \in \mathbb R^n.
\end{eqnarray}
If $f$ is differentiable at $x^0$, then $g$ is unique, and  $g=\nabla f(x^0)$.
\end{theorem}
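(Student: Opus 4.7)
The plan is to first establish existence of $g$ by applying a supporting hyperplane argument to the hypograph of $f$, and then, under differentiability, to pin down uniqueness via a directional-derivative squeeze.

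For existence, I would observe that the hypograph $\textup{hypo}(f) := \{(x,t) \in \mathbb R^n \times \mathbb R : t \leq f(x)\}$ is convex because $f$ is concave, and that $(x^0, f(x^0))$ lies on its boundary. The supporting hyperplane theorem then yields a nonzero pair $(a,\alpha) \in \mathbb R^n \times \mathbb R$ with
$$a^{\top}(x - x^0) + \alpha (t - f(x^0)) \leq 0 \quad \text{for every } (x,t) \in \textup{hypo}(f).$$
Fixing $x = x^0$ and letting $t \to -\infty$ forces $\alpha \geq 0$. The delicate step is to rule out $\alpha = 0$: were $\alpha = 0$, the supporting inequality would reduce to $a^{\top}(x - x^0) \leq 0$ for every $x \in \dom(f)$. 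Since $x^0 \in \relint(\dom(f))$, one can perturb $x^0$ by a small amount in any direction parallel to $\aff(\dom(f))$ and still remain in $\dom(f)$; applying the inequality to both $d$ and $-d$ shows that $a$ is orthogonal to the parallel subspace of $\aff(\dom(f))$, so $(a,0)$ is trivial as a supporting functional and can be replaced by one with $\alpha > 0$ (in the case $\dom(f) = \mathbb R^n$ stated in the theorem, this immediately forces $a = 0$, contradicting $(a,\alpha) \neq 0$). With $\alpha > 0$, setting $g := -a/\alpha$ and taking $t = f(x)$ in the supporting inequality yields exactly \eqref{super_def}.

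For the uniqueness clause, suppose $f$ is differentiable at $x^0$ and let $g$ be any vector satisfying \eqref{super_def}. For every direction $d \in \mathbb R^n$ and every $t > 0$, plugging $x = x^0 + td$ into \eqref{super_def} and dividing by $t$ yields $[f(x^0+td) - f(x^0)]/t \leq g^{\top}d$. Passing to the limit $t \to 0^+$ and invoking differentiability gives $\nabla f(x^0)^{\top} d \leq g^{\top} d$, so $(g - \nabla f(x^0))^{\top} d \geq 0$; applying this to both $d$ and $-d$ forces $g = \nabla f(x^0)$.

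The main obstacle I foresee is the careful justification that the supporting hyperplane can be chosen with $\alpha > 0$ under the relative-interior hypothesis; everything else follows by routine manipulations using convexity and the definition of differentiability.
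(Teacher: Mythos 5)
Your proof is correct and is exactly the standard argument from the references the paper cites for this fact (Rockafellar; Boyd and Vandenberghe) --- the paper itself states Theorem \ref{T2} without proof, noting only that the convex-function versions of these results translate to the concave case. Your handling of the one delicate point (ruling out a vertical supporting hyperplane, i.e.\ $\alpha=0$) is adequate: for the theorem as stated $f$ is finite on all of $\mathbb R^n$, so $\dom f=\mathbb R^n$ and $\alpha=0$ immediately forces $a=0$, contradicting $(a,\alpha)\neq 0$; and the directional-derivative squeeze for uniqueness is the standard one.
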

The vector $g$ that satisfies \eqref{super_def} is called the \emph{supergradient} of $f$ at $x^0$. The set of all supergradients of $f$ at $x_0$ is called the \emph{superdifferential} of $f$ at $x^0$, and is denoted $\partial f(x^0)$.
By Theorem \ref{T2}, if $f$ is differentiable at $x^0$, then $\partial f(x^0)=\{\nabla f(x^0)\}$.
The following lemma about supergradient, which is a simple application of the chain rule, is also useful.
\begin{lemma}\label{T3}
Let $f \ :  \mathbb R^n \rightarrow \mathbb R$ be a concave function, and $D \in \mathbb R^{m \times n}$ and $b \in \mathbb R^m$ be arbitrary matrices. Then, $g(x):=f(Dx+b)$ is a concave function and we have:
$$
\partial g(x)=D^{\top} \partial f(Dx+b).
$$
\end{lemma}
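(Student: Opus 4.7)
The plan is to establish the two claims of the lemma—concavity of $g$ and the superdifferential identity—separately. Concavity is immediate from the affine invariance of convex combinations: for $x_1,x_2\in\mathbb R^n$ and $\lambda\in[0,1]$, I would write
$$
g(\lambda x_1 + (1-\lambda) x_2) = f(\lambda(Dx_1+b) + (1-\lambda)(Dx_2+b)) \geq \lambda g(x_1) + (1-\lambda) g(x_2),
$$
using concavity of $f$.

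For the superdifferential identity, I would prove the two inclusions separately. The inclusion $D^\top\partial f(Dx+b)\subseteq\partial g(x)$ is the straightforward ``chain-rule direction'': given $h\in\partial f(Dx+b)$, applying the supergradient inequality of $f$ at the point $Dx'+b$ yields
$$
g(x')=f(Dx'+b)\leq f(Dx+b)+h^\top D(x'-x)=g(x)+(D^\top h)^\top(x'-x),
$$
for every $x'\in\mathbb R^n$, which is exactly the statement $D^\top h\in\partial g(x)$.

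The reverse inclusion is where the real work lies. Given $g^*\in\partial g(x)$, my first step would be to observe that $g^*\in\mathcal R(D^\top)$: since $g$ is constant on cosets of $\mathcal N(D)$, applying the supergradient inequality to $v$ and then to $-v$ for $v\in\mathcal N(D)$ forces $(g^*)^\top v=0$, so $g^*\in \mathcal N(D)^\perp=\mathcal R(D^\top)$. This guarantees some $h_0$ with $D^\top h_0=g^*$ exists, but not every such $h_0$ need lie in $\partial f(Dx+b)$. To isolate one that does, I would invoke a separation argument in $\mathbb R^m\times\mathbb R$: set
$$
C:=\{(z,\tau)\in\mathbb R^m\times\mathbb R:\, f(Dx+b+z)-f(Dx+b)>\tau\},
$$
which is convex and open (since $f$ is finite on all of $\mathbb R^n$ and hence continuous), and set $L:=\{(Du,(g^*)^\top u):u\in\mathbb R^n\}$. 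The supergradient property of $g^*$ translates precisely to $L\cap C=\emptyset$, so a hyperplane separates them, giving a nonzero $(h,-\alpha)\in\mathbb R^m\times\mathbb R$ with $\langle h,z\rangle\leq\alpha\tau$ on the closure of $C$ and $\langle h,Du\rangle=\alpha(g^*)^\top u$ on $L$. A short argument (letting $\tau$ tend to $-\infty$ with $z$ fixed) forces $\alpha>0$; after normalizing to $\alpha=1$, the inequality on $C$ yields $h\in\partial f(Dx+b)$ and the equality on $L$ yields $D^\top h=g^*$.

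The main obstacle is this last separation-and-normalization step; the hypothesis that $f$ is finite on all of $\mathbb R^n$ plays the role of a constraint qualification that ensures $\alpha>0$, and is what turns the chain rule into an equality rather than merely a one-sided inclusion. This is essentially the concave analogue of Rockafellar's Theorem 23.9 specialized to affine maps, applied under the (strong) global-finiteness hypothesis built into the lemma.
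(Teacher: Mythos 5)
The paper does not actually prove Lemma \ref{T3}: it is stated as ``a simple application of the chain rule'' with the burden shifted to the standard references \cite{rockafellar,convex-boyd}, and no argument appears in the appendices. Your proposal therefore supplies a proof the paper omits, and it is the right one: it is precisely the concave mirror of Rockafellar's Theorem 23.9 for the affine map $x\mapsto Dx+b$, with the global finiteness of $f$ standing in for the relative-interior constraint qualification, which is exactly the fact the paper is implicitly invoking. Your easy inclusion $D^{\top}\partial f(Dx+b)\subseteq\partial g(x)$ and the observation that any $g^{*}\in\partial g(x)$ must lie in $\mathcal R(D^{\top})$ are both correct, and the separation of the open strict hypograph $C$ from the subspace $L$ is the standard way to produce a preimage $h$ that is genuinely a supergradient of $f$. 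One bookkeeping slip: with the orientation you wrote, $\langle h,z\rangle\leq\alpha\tau$ on $C$, sending $\tau\to-\infty$ forces $\alpha<0$ rather than $\alpha>0$ (for $\alpha>0$ the right-hand side tends to $-\infty$); you should either flip the separation inequality or replace $(h,-\alpha)$ by its negative, after which the same normalization gives $h\in\partial f(Dx+b)$ and $D^{\top}h=g^{*}$. This does not affect the validity of the argument. You might also note, as a point against the paper rather than against you, that the statement has a dimension typo: for $f(Dx+b)$ to be defined with $D\in\mathbb R^{m\times n}$ and $b\in\mathbb R^{m}$, the function $f$ must be defined on $\mathbb R^{m}$, not $\mathbb R^{n}$; your proof tacitly (and correctly) reads it that way.
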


\subsection{Cutting-plane algorithm in the $s$-space}

Assume that we have a starting point $s^0$ and we can obtain a supergradient $g^0$ of $U$ at $s^0$ from the DM. By using \eqref{super_def}, for every $s$,
\begin{eqnarray} \label{for-quasi}
U(s)- U(s^0) \geq 0 \ \ \Rightarrow \ \  (g^{0})^{\top}(s-s^0) \geq 0.
\end{eqnarray}
 This means that all optimal points are in the half-space $(g^{0})^{\top}(s-s^0) \geq 0$. So, by adding this cut, we can shrink the $s$-space and guarantee that there exists an optimal solution in the shrunken part. We can translate this cut to a cut in the $x$-space by using \eqref{analytic center}:
$$
(g^{0})^{\top}(s-s^0) = (g^{0})^{\top}(b-Ax-b+Ax^0) = (g^{0})^{\top}A(x^0-x).
$$
Using this equation, we consider the cut as a new constraint of the original problem; \\ \mbox{$(g^{0})^{\top}Ax \leq (g^{0})^{\top}Ax^0$}. Let us define $a^{(m+1)}=(g^{0})^{\top}A$ and $b_{m+1}=(g^{0})^{\top}Ax^0$. We redefine $\mathcal F$ by adding this new constraint and find the weighted center for a chosen weight vector $w^1$.
The step-by-step algorithm is as follows:

\textbf{$S$-space Algorithm:}
\begin{itemize}
\item{Step 1: Set $w^0=\frac{1}{m}e$ and find the $w^0$-centers $(x^0,y^0,s^0)$ with respect to ${\mathcal F}$.}
\item{Step 2: Set $k=0$, $A_0=A$, $b^0=b$, and $\mathcal F_{0}= \mathcal F$.}
\item{Step 3: If $s^k$ satisfies the DM, return $(x^k,y^k,s^k)$ and \textbf{stop}.}
\item{Step 4: Set $k=k+1$. Find $g^{k-1}$, the supergradient of $U(s)$ at $s^{k-1}$. Set
\begin{eqnarray} \label{Al1-1}
&&A_k=\left[%
\begin{array}{c}
A_{k-1} \\
(g^{k-1})^{\top}A_{k-1} \\
\end{array}
\right] ,  \ \ \ \
b^k=\left[%
\begin{array}{c}
b^{k-1} \\
(g^{k-1})^{\top}A_{k-1}x^{k-1} \\
\end{array}%
\right],  \nonumber \\
&&\mathcal F_k:=\left\{ x\in\mathbb R^{n}: \ \langle a_k^{(i)},x\rangle \leq
b^{k}_i, \forall i\in\{1,2,\ldots ,m+k\}\right\}.
\end{eqnarray}
}
\item{Step 5: Set $w_i^k=\frac{1}{m^2}$ for $i\in\{m+1,\ldots,m+k\}$ and $w_i^k=\frac{1}{m}-\frac{k}{m^2}$ for $i\in\{1,\ldots,m\}$. Find the $w^k$-center $(x^k,y^k,s^k)$ with respect to $\mathcal F_k$. Return to Step 3.}
\end{itemize}

The logic behind Step 5 is that we want to give smaller weights to the new constraints than the original ones
(however, our choices above are just examples; implementers should make suitable, practical
choices that are tailored to their specific application). A main problem with this algorithm is
that the dimension of the weight-space is very large and is increased by one every time we add a constraint.
We show that this problem is solved by our $w$-space algorithm and the notion of driving factors in the following subsections.

\subsection{Cutting-plane algorithm in the $w$-space}

\label{sec:algorithm w-space}
In this subsection, we consider the cuts in the $w$-space. To do that, we first try a natural way of extending the algorithm in the $s$-space to the one in the $w$-space. We show that this extension only works for a limited subset of utility functions. Then, we develop an algorithm applicable to all concave utility functions.

Like the $s$-space, we try to use the supergradients of $U(s)$. Let $U_w$ denote the utility function as a function of $w$. From \eqref{analytic center} we have $Ys=w$; so, $U_w(w)=U(s)=U(Y^{-1}w)$. If $Y$ were constant for all weight vectors, $U_w(w)$ would be a concave function, and we could use Lemma \ref{T3} to find the supergradient at each point.
The problem here is that $Y$ is not necessarily the same for different weight vectors. Assume that we confine ourselves to weight vectors in the simplex $W$ with the same $y$-vector ($W_y$). $U_w(w)$ is a concave function on $W_y$, so, we can define its supergradient. By Lemma \ref{T3}, we conclude that  $\partial U_w(w)= Y^{-1} \partial U(s)$ for all $w \in W_y$.

Suppose we start at $w^0$ with the weighted center $(x^0,y^0,s^0)$. Let us define $g^{0w}:=(Y^0)^{-1}g^0$, where $g^0$ is a supergradient of $U(s)$ at $s^0$. Then, from \eqref{super_def} we have,
\begin{eqnarray} \label{superg-w}
U_w(w) \leq U_w(w^0) + (g^{0w})^{\top}(w-w^0), \ \ \ \forall w \in W_{y^0}.
\end{eqnarray}


\noindent If we confine the weight-space to $W_y$, by the same procedure used for $s$-space, we can introduce cuts in the $w$-space using \eqref{superg-w}. The problem is that we do not have a proper characterization of $W_y$. On the other hand, $U_w$ may not be a concave function on the whole simplex.
Assume that $s^{opt}$ is an optimal solution of \eqref{main p}, and $W_{s^{opt}}$ is the set of weight vectors in the simplex with $s$-vector $s^{opt}$. It is easy to see that $W_{s^{opt}}$ is convex. We also have the following lemma:

\begin{lemma} \label{FL}
Let $(x',y',s')$ be the weighted center corresponding to $w'$, $s^{opt}$ be an optimal solution of \eqref{main p}, and $g'$ be the supergradient of $U(s)$ at $s'$. Then, $S^{opt}y'$ is in the half-space ${g'}_{w}^{\top}(w-w') \geq 0$, where $g'_w=Y'^{-1}g'$.
\end{lemma}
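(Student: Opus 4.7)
The plan is a direct computation followed by invoking the supergradient inequality and the optimality of $s^{opt}$.

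First, I would set $w := S^{opt} y'$. By the discussion following Definition \ref{centric}, since $s^{opt}$ is centric (being a feasible slack vector of an optimal weighted center) and $y'$ is centric, the vector $w = S^{opt} y'$ lies in the unit simplex $W$, so it is a bona fide weight vector and the statement of the lemma makes sense. Note also that $w' = S' y'$ by \eqref{analytic center}.

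Next, the key algebraic step is to rewrite $w-w'$ in terms of $s^{opt}-s'$. Because $Sy = Ys$ coordinate-wise,
\begin{equation*}
w - w' \;=\; S^{opt} y' - S' y' \;=\; Y'\bigl(s^{opt}-s'\bigr).
\end{equation*}
Using $g'_w = (Y')^{-1} g'$, this yields
\begin{equation*}
(g'_w)^\top (w-w') \;=\; \bigl((Y')^{-1} g'\bigr)^\top Y'\bigl(s^{opt}-s'\bigr) \;=\; (g')^\top \bigl(s^{opt}-s'\bigr).
\end{equation*}

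Finally, since $g'$ is a supergradient of the concave function $U$ at $s'$, Theorem \ref{T2} gives
\begin{equation*}
U(s^{opt}) \;\leq\; U(s') + (g')^\top \bigl(s^{opt}-s'\bigr),
\end{equation*}
hence $(g')^\top(s^{opt}-s') \geq U(s^{opt}) - U(s')$. Because $s^{opt}$ is optimal for \eqref{main p} and $s' \in B_s$ is feasible, $U(s^{opt}) - U(s') \geq 0$. Combining these gives $(g'_w)^\top (w-w') \geq 0$, which is exactly the claim.

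There is essentially no hard step here; the only subtlety to flag is the need to verify that $S^{opt} y'$ is in the simplex (so the cut in weight space is meaningful), which follows from the results stated just before Example \ref{Wsy}. Everything else is a line of algebra combined with the supergradient inequality \eqref{super_def} applied in the same spirit as \eqref{for-quasi}.
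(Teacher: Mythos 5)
Your proof is correct and follows essentially the same route as the paper's: the identity $(g'_w)^\top(S^{opt}y'-w') = (g')^\top(s^{opt}-s')$ via the diagonal-scaling cancellation, followed by the supergradient inequality and optimality of $s^{opt}$. The only addition is your (harmless and reasonable) preliminary check that $S^{opt}y'$ lies in the simplex, which the paper leaves implicit.
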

\begin{proof}
We have
${g'}_{w}^{\top}(S^{opt}y'-w')=g'^{\top}{Y'}^{-1}(S^{opt}y'-S'y')=g'^{\top}(s^{opt}-s') \geq 0.$
The last inequality follows from the fact that $s^{opt}$ is a maximizer and $g'$ is a supergradient of $U(s)$ at $s'$.
\end{proof}

The above lemma shows that using hyperplanes of the form $g'^{\top}Y'^{-1}(w-w')$, we can always keep a point from $W_{s^{opt}}$. Now, using the fact that $W_{s^{opt}}$ is convex and the above lemma, the question is: if we use a sequence of these hyperplanes, can we always keep a point from $W_{s^{opt}}$?  A simpler question is: We start with $w^0$ and shrink the simplex $W$ into the intersection of the half-space $(g^{0w})^{\top}(w-w^0) \geq 0$ and the simplex, say $W_0$. Then we choose an arbitrary weight vector $w^1$ with weighted center $(x^1,y^1,s^1)$ from the shrunken space $W_0$. If $g^1$ is a supergradient of $U(s)$ at $s^1$, then we shrink $W_0$ into the intersection of $W_0$ and the half-space $(g^{1w})^{\top}(w-w^1) \geq 0$, where $g^{1w}=(Y^{1})^{-1}g^1$, and call the last shrunken space $W_1$. Is it always true that a weight vector with $s$-vector $s^{opt}$ exists in $W_1$?
In the following, we show that this is true for some utility functions, but not true in general. We define a special set of functions that have good properties for cuts in the $w$-space, and the above algorithm works for them.
\begin{definition} \label{NDAS}
A function $f: \mathbb R^m_{++} \rightarrow \mathbb R$ is called \emph{Non-Decreasing under Affine Scaling (NDAS)} if for every $d \in \mathbb R^m_{++}$ we have:
\begin{enumerate}
\item{$
f(s) \ \leq \max\{f(Ds),f(D^{-1}s)\}, \ \ \ \forall s \in \mathbb R^m_{++}.
$}
\item{If for a single $s^0 \in \mathbb R^m_{++}$ we have $f(s^0) \leq f(Ds^0)$, then $f(s) \leq f(Ds)$ for all $s \in \mathbb R^m_{++}$.}
\end{enumerate}
\end{definition}
For every $t \in \mathbb R^m$ the function $f_1(s):=\sum_{i=1}^mt_i \log{s_i}$ is NDAS. Indeed, for every $s,d \in \mathbb R^m_{++}$ we have:
\begin{eqnarray*}
f_1(s)-f_1(Ds) &=& -\sum_{i=1}^mt_i \log{d_i}, \nonumber\\
f_1(s)-f_1(D^{-1}s) &=& -\sum_{i=1}^mt_i \log{\frac{1}{d_i}}=\sum_{i=1}^mt_i \log{d_i},
\end{eqnarray*}
and so we have $2f_1(s)=f_1(Ds)+f_1(D^{-1}s)$. The second property is also easy to verify and the function is NDAS. $f_1(s)$ is also important due to its relation to a family of classical utility functions in mathematical economics: Cobb-Douglas production function which is defined as $U_{cd}(s)=\prod_{i=1}^m s_i^{t_i}$, where $t \in \mathbb R^m_{++}$. Usage of this function to simulate problems in economics goes back to at least the 1920's. Maximization of $U_{cd}(s)$ is equivalent to the maximization of its logarithm which is equal to \\ $f_1(s)=\ln(U_{cd}(s))=\sum_{i=1}^mt_i \log{s_i}$.
Authors in \cite{qeval} considered the Cobb-Douglas utility function to present an algorithm for evaluating and ranking items with multiple attributes. \cite{qeval} is related to our work as the proposed algorithm is a cutting-plane one. \cite{qeval} also used the idea of weight-space as the utility function is the weighted sum of the attributes. However, our algorithm uses the concept of weighted analytic center which is different.
Now, we have the following proposition.
\begin{proposition} \label{W-l3}
Assume that $U(s)$ is a NDAS concave function.
Let $(x^0,y^0,s^0)$ and $(x^1,y^1,s^1)$ be the weighted centers of $w^0$ and $w^1$, and $g^0$ and $g^1$ be the supergradients of $U(s)$ at $s^0$ and $s^1$, respectively. Then we have
$$
\left \{w: \ (g^{0w})^{\top}(w-w^0) \geq 0, \ (g^{1w})^{\top}(w-w^1) \geq 0  \right \} \cap W_{s^{opt}}  \neq \ \phi,
$$
where $g^{0w}=(Y^{0})^{-1}g^0$ and $g^{1w}=(Y^{1})^{-1}g^1$.
\end{proposition}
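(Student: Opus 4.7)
The plan is to exhibit two natural candidate points in $W_{s^{opt}}$ and show that at least one of them satisfies both cuts. The two candidates are $w^a := S^{opt}y^0$ and $w^b := S^{opt}y^1$. By the discussion preceding Definition \ref{centric}, for every centric $s$-vector $\hat s$ and every centric $y$, the vector $\hat S y$ lies in the simplex $W$ and is the weight vector for which $(\hat x, y, \hat s)$ is the weighted center. So $w^a, w^b$ are both in $W_{s^{opt}}$. Moreover, Lemma \ref{FL} immediately yields that $w^a$ satisfies the first cut and $w^b$ satisfies the second cut. The task therefore reduces to showing that one of $w^a, w^b$ satisfies the remaining cut as well.

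Next I would translate the remaining cuts back to the $s$-space. Introduce the diagonal scaling $D := (Y^0)^{-1}Y^1 \succ 0$, so that $(Y^0)^{-1}S^{opt}y^1 = D s^{opt}$ and $(Y^1)^{-1}S^{opt}y^0 = D^{-1}s^{opt}$. A direct computation analogous to the one in the proof of Lemma \ref{FL} gives
\begin{eqnarray*}
(g^{0w})^{\top}(w^b - w^0) &=& (g^0)^{\top}\bigl(Ds^{opt} - s^0\bigr), \\
(g^{1w})^{\top}(w^a - w^1) &=& (g^1)^{\top}\bigl(D^{-1}s^{opt} - s^1\bigr).
\end{eqnarray*}
Applying the defining supergradient inequality \eqref{super_def} at $s^0$ and at $s^1$, I can lower bound these expressions by $U(Ds^{opt}) - U(s^0)$ and $U(D^{-1}s^{opt}) - U(s^1)$ respectively. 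Thus it suffices to show $U(Ds^{opt}) \geq U(s^0)$ or $U(D^{-1}s^{opt}) \geq U(s^1)$.

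This is exactly where the NDAS hypothesis enters, and is the conceptual heart of the argument. Applying the first NDAS property to $s = s^{opt}$ with the diagonal $D$ above gives
\[
U(s^{opt}) \;\leq\; \max\bigl\{U(Ds^{opt}),\, U(D^{-1}s^{opt})\bigr\}.
\]
Combined with the optimality bounds $U(s^{opt}) \geq U(s^0)$ and $U(s^{opt}) \geq U(s^1)$, a two-case split finishes the proof: if $U(Ds^{opt}) \geq U(s^{opt})$, then $U(Ds^{opt}) \geq U(s^0)$ and $w^b$ satisfies both cuts; otherwise $U(D^{-1}s^{opt}) \geq U(s^{opt}) \geq U(s^1)$ and $w^a$ satisfies both cuts. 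In either case we have produced a point in the desired intersection.

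The main obstacle in discovering the argument is recognizing the right diagonal scaling $D = (Y^0)^{-1}Y^1$ that naturally couples the two weighted centers, and seeing how the NDAS property was tailored precisely to supply the dichotomy needed to close the gap. Once $D$ is written down, everything else is a routine chase through Lemma \ref{FL}, the supergradient definition, and the two defining properties of NDAS; no further structural facts about the weight simplex or the central path beyond what is already recorded in the paper are needed.
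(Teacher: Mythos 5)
Your proof is correct and follows essentially the same route as the paper's: both arguments test the two candidate points $Y^0s^{opt}$ and $Y^1s^{opt}$, invoke Lemma \ref{FL} for one cut each, reduce the remaining cut to comparing $U(Ds^{opt})$ and $U(D^{-1}s^{opt})$ with $U(s^0)$ and $U(s^1)$ via the supergradient inequality, and close with the first NDAS property applied to $D=(Y^0)^{-1}Y^1$. The only difference is presentational: the paper argues by contradiction while you run the equivalent direct two-case split.
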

\begin{proof}
Consider the weight vectors $Y^0s^{opt}$ and $Y^1s^{opt}$. Our two hyperplanes are
\begin{eqnarray*}
&&P_0:= \{w: \ \ (g^0)^{\top}(Y^{0})^{-1}(w-Y^0s^0)=0 \}, \\
&&P_1:= \{w: \ \ (g^1)^{\top}(Y^{1})^{-1}(w-Y^1s^1)=0 \}.
\end{eqnarray*}
By Lemma \ref{FL}, $Y^0s^{opt}$ is in the half-space $(g^0)^{\top}(Y^{0})^{-1}(w-Y^0s^0) \geq0$  and $Y^1s^{opt}$ is in the half-space $(g^1)^{\top}(Y^{1})^{-1}(w-Y^1s_1) \geq 0$. If one of these two points is also in the other half-space, then we are done. So, assume that
$$
(g^0)^{\top}(Y^{0})^{-1}(Y^1s^{opt}-Y^0s^0) < 0 \ \ \textup{and} \ \ (g^1)^{\top}(Y^{1})^{-1}(Y^0s^{opt}-Y^1s^1) < 0
$$
(we are seeking contradiction), which is equivalent to
\begin{eqnarray}\label {W-l2-1}
 (g^0)^{\top}((Y^{0})^{-1}Y^1s^{opt}-s^0) < 0 \ \ \textup{and} \ \ (g^1)^{\top}((Y^{1})^{-1}Y^0s^{opt}-s^1) < 0.
\end{eqnarray}
Using \eqref{superg-w} and \eqref{W-l2-1} we conclude that
\begin{eqnarray*}
&&U((Y^{0})^{-1}Y^1s^{opt}) < U(s^0) \leq U(s^{opt}) \ \ \textup{and} \\
&&U((Y^{1})^{-1}(Y^0)s^{opt}) < U(s^1) \leq U(s^{opt}).
\end{eqnarray*}
However, note that $(Y^{0})^{-1}Y^1=((Y^{1})^{-1}Y^0)^{-1}$ and this is a contradiction to Definition \ref{NDAS}. So \eqref{W-l2-1} is not true and at least one of $Y^0s^{opt}$ and $Y^1s^{opt}$ is in
$$\{w: \ (g^{0w})^{\top}(w-w^0) \geq 0, \ (g^{1w})^{\top}(w-w^1) \geq 0 \}.$$
\end{proof}
By Proposition \ref{W-l3}, using the first two hyperplanes, the intersection of the shrunken space and $W_{s^{opt}}$ is not empty.
Now, we want to show that we can continue shrinking the space and have nonempty intersection with  $W_{s^{opt}}$.

\begin{proposition} \label{NDAS-thm}
Assume that $U(s)$ is a NDAS concave function.
Let $(x^i,y^i,s^i)$ be the weighted centers of $w^i$, $ i \in \{0, \ldots , k\}$, and $g^i$ be the supergradients of $U(s)$ at $s^i$.
Let us define
$$
W^i:=\left \{w: \ (g^{iw})^{\top}(w-w^i) \geq 0  \right \} \cap W,
$$
where $g^{iw}=(Y^{i})^{-1}g^i$. Assume we picked the points such that
\begin{eqnarray} \label{NDAS-thm-1}
w^i \in \relint \left ( \bigcap_{j=0}^{i-1} W^j \right ), \ \ \ \ i \in \{1, \ldots , k\}.
\end{eqnarray}
Then we have
\begin{eqnarray} \label{NDAS-thm-2}
\left ( \bigcap_{j=0}^{k} W^j \right ) \ \cap \ W_{s^{opt}} \neq \ \phi,
\end{eqnarray}
where $s^{opt}$ is an optimal solution of \eqref{main p}.
\end{proposition}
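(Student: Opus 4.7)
My plan is to avoid a direct induction on $k$ and instead establish the conclusion in one stroke by constructing a total preorder on the index set $\{0, 1, \ldots, k\}$ whose minimum element supplies the desired point in $\bigcap_{j=0}^{k} W^j \cap W_{s^{opt}}$. Concretely, I will show that there exists $i^* \in \{0, \ldots, k\}$ such that $S^{opt} y^{i^*} \in W^j$ for every $j$; since $S^{opt} y^{i^*}$ is a simplex weight vector (by the observation in Subsection \ref{weight-0} that $\hat S y$ is a simplex weight vector whenever $\hat s$ and $y$ are centric) and manifestly lies in $W_{s^{opt}}$ by construction, this will finish the proof.

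The algebraic backbone is the diagonal matrix $D_{ji} := (Y^j)^{-1} Y^i$, which satisfies the identities $D_{ji}^{-1} = D_{ij}$ and $D_{\ell j} D_{ji} = D_{\ell i}$. A short computation in the spirit of Lemma \ref{FL} shows that $S^{opt} y^i \in W^j$ is equivalent to $(g^j)^{\top}(D_{ji} s^{opt} - s^j) \geq 0$, and the supergradient inequality for the concave $U$ at $s^j$ reduces this to the sufficient condition $U(D_{ji} s^{opt}) \geq U(s^j)$. Applying NDAS property (1) at $s = s^{opt}$ with $D = D_{ji}$ yields $U(s^{opt}) \leq \max\{U(D_{ji} s^{opt}), U(D_{ij} s^{opt})\}$, so at least one of the two values on the right is $\geq U(s^{opt})$. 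NDAS property (2) then upgrades this single-point information into the global statement: $U(s^{opt}) \leq U(D_{ji} s^{opt})$ forces $U(s) \leq U(D_{ji} s)$ for every $s \in \mathbb R_{++}^{m}$.

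These observations lead naturally to the relation $i \preceq j$ defined by ``$U(s) \leq U(D_{ji} s)$ for all $s \in \mathbb R_{++}^{m}$'', which is reflexive (since $D_{ii} = I$), transitive by chaining two such inequalities through the composition identity $D_{\ell j} D_{ji} = D_{\ell i}$, and total by the NDAS dichotomy above. Hence $\preceq$ is a total preorder on a finite set and admits a minimum element $i^*$ that satisfies $i^* \preceq j$ for every $j$. Evaluating the relation at $s^{opt}$ and combining with $U(s^j) \leq U(s^{opt})$ (feasibility of $s^j$ and optimality of $s^{opt}$ over $B_s$), I get $U(D_{ji^*} s^{opt}) \geq U(s^{opt}) \geq U(s^j)$, so the supergradient inequality at $s^j$ yields $(g^j)^{\top}(D_{ji^*} s^{opt} - s^j) \geq 0$, i.e., $S^{opt} y^{i^*} \in W^j$, for every $j$.

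The main obstacle I anticipate is the clean verification of transitivity of $\preceq$: one must chain two NDAS-driven pointwise inequalities through the matrix identity $D_{\ell j} D_{ji} = D_{\ell i}$, and it is essential that property (2) upgrades each individual link from a single-point to a uniform-in-$s$ statement, otherwise the composition step breaks. Once transitivity is in hand, totality and the existence of a minimum element are immediate, and the rest of the argument is bookkeeping via the supergradient inequality. I also note that this approach does not appear to use the hypothesis $w^i \in \relint\bigl(\bigcap_{j<i} W^j\bigr)$, which therefore seems to play an algorithmic (well-posedness of the iteration) rather than mathematical role in this proposition.
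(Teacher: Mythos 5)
Your argument is correct, and it takes a genuinely different route from the paper's. The paper proves \eqref{NDAS-thm-2} by contradiction: it encodes the emptiness of the intersection as the infeasibility of a linear system, invokes Farkas' Lemma to produce multipliers, massages the resulting inequalities (multiplying by $e^{\top}Y^{j}$) to show that every index $j$ admits some $\phi_j\neq j$ with $U(Y^{j}(Y^{\phi_j})^{-1}s^{opt})<U(s^{opt})$, and then chases a cycle $j_1,\ldots,j_t$ of the map $\phi$ to contradict the chained NDAS inequalities. Your proof dispenses with Farkas entirely: the total preorder $i\preceq j$ defined by $U(s)\leq U(D_{ji}s)$ for all $s$ (totality from NDAS property (1) plus the single-point-to-global upgrade of property (2), transitivity from $D_{\ell j}D_{ji}=D_{\ell i}$) has a least element $i^*$ on the finite index set, and $S^{opt}y^{i^*}$ is then verified directly to lie in every $W^j$ via the supergradient inequality, exactly as in Lemma \ref{FL}. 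The combinatorial core (chaining NDAS inequalities through products of the diagonal scalings $(Y^j)^{-1}Y^i$) is the same in both proofs --- your cycle-free ordering argument and the paper's cycle-chasing are two faces of the same dichotomy --- but your version is constructive and strictly more informative: it exhibits an explicit witness of the form $S^{opt}y^{i^*}$ with $i^*$ among the iterates, rather than merely asserting nonemptiness of the intersection. Your closing observation is also accurate: neither your proof nor the paper's actually invokes hypothesis \eqref{NDAS-thm-1}, which appears to serve only to keep the algorithm well defined (so that each $w^i$ is a legitimate interior point of the current cut simplex) rather than to support the conclusion. One small point worth making explicit if you write this up: the supergradient inequality is applied at the point $D_{ji^*}s^{opt}$, which lies in $\mathbb R^m_{++}=\dom U$ but need not be a centric slack vector; this is harmless (Theorem \ref{T2} gives the inequality for all $s$), and the paper's proof relies on the same fact implicitly.
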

\begin{proof}
Among the three representations of $W_s$ were given in \eqref{W_s}, we use the second one in the following. If \eqref{NDAS-thm-2} is not true, then the following system is infeasible:
\begin{eqnarray} \label{NDAS-thm-3}
A^{\top}(S^{opt})^{-1}w = 0, \ \ \ e^{\top}w=1,  \ \ \ w \ge 0, \nonumber \\
(g^{iw})^{\top}(w-w^i) \geq 0, \ \ \ i \in \{0, \ldots , k\}.
\end{eqnarray}
By Farkas' Lemma, there exist $v \in \mathbb R^n$, $p \in \mathbb R$, and $q \in \mathbb R^k_{+}$ such that:
\begin{eqnarray} \label{NDAS-thm-4}
&&(S^{opt})^{-1}Av+pe-\sum_{i=0}^k q_i g^{iw} \geq 0  \ \ \Leftrightarrow \ \ Av+ps^{opt}-\sum_{i=0}^k q_i S^{opt}(Y^{i})^{-1}g^i \geq 0, \nonumber \\
&&p -  \sum_{i=0}^k q_i (g^{iw})^{\top} w^i < 0 \ \ \Leftrightarrow \ \ p -  \sum_{i=0}^k q_i (g^{i})^{\top} s^i < 0.
\end{eqnarray}
Now for each $j \in \{0, \ldots , k\}$, we multiply both sides of the first inequality in \eqref{NDAS-thm-4} with $e^{\top}Y^{j}$, then we have:
\begin{eqnarray} \label{NDAS-thm-5}
&&p -\sum_{i=0}^k q_i (s^{opt})^{\top} Y^{j}(Y^{i})^{-1}g^i \geq 0,  \ \ \ \forall j \in \{0, \ldots , k\}, \nonumber \\
&&p -  \sum_{i=0}^k q_i (g^{i})^{\top} s^i < 0,
\end{eqnarray}
where we used the facts that $e^{\top}Y^{j}Av=(A^{\top}y^{j})^{\top}v=0$ and $e^{\top}Y^{j}s^{opt}=1$. If we multiply the first set of inequalities in \eqref{NDAS-thm-5} with $-1$ and add it to the second one we have
\begin{eqnarray} \label{NDAS-thm-6}
q_j(g^{j})^{\top} (s^{opt}-s^j) + \sum_{i\neq j}  q_i (g^{i})^{\top} (Y^{j}(Y^{i})^{-1}s^{opt}-s^i) < 0,
\end{eqnarray}
for all $j \in \{0, \ldots , k\}$. $q \in \mathbb R^k_{+}$ and $(g^{j})^{\top} (s^{opt}-s^j) \geq 0$ by supergradient inequality. Hence, from \eqref{NDAS-thm-6}, for each $j \in \{0, \ldots , k\}$, there exists $\phi_j \in \{0, \ldots , k\} \backslash \{j\}$ such that $(g^{\phi_j})^{\top} (Y^{j}(Y^{\phi_j})^{-1}s^{opt}-s^{\phi_j}) < 0$ which, using \eqref{for-quasi}, means $U(Y^{j}(Y^{\phi_j})^{-1}s^{opt}) < U(s^{\phi_j}) \leq U(s^{opt}) $. Therefore, by the first property of NDAS functions, we must have
\begin{eqnarray} \label{NDAS-thm-7}
U(Y^{\phi_j}(Y^{j})^{-1}s^{opt}) \geq U(s^{opt}).
\end{eqnarray}
Now, it is easy to see that there exists a sequence $j_1, \ldots, j_t \in \{0, \ldots , k\} $ such that $\phi_{j_i}=j_{i+1}$ and $\phi_{j_t}=j_{1}$. By using \eqref{NDAS-thm-7} and the second property of NDAS functions $t-1$ times we can write:
\begin{eqnarray} \label{NDAS-thm-8}
U(s^{opt}) & \leq & U(Y^{j_2}(Y^{j_1})^{-1}s^{opt}) \nonumber \\ & \leq & U(Y^{j_3}(Y^{j_2})^{-1}Y^{j_2}(Y^{j_1})^{-1}s^{opt}) \nonumber \\ &\leq& \cdots \leq U(Y^{j_t}(Y^{j_{t-1}})^{-1} \cdots Y^{j_2}(Y^{j_1})^{-1}s^{opt}) \nonumber \\ &=& U(Y^{j_t}(Y^{j_1})^{-1}s^{opt}).
\end{eqnarray}
However, we had $U(Y^{j_t}(Y^{j_1})^{-1}s^{opt})= U(Y^{j_t}(Y^{\phi_{j_t}})^{-1}s^{opt}) < U(s^{opt})$ which is a contradiction to \eqref{NDAS-thm-8}. This means the system \eqref{NDAS-thm-3} is feasible and we are done.
\end{proof}
Proposition \ref{NDAS-thm} shows that the above-mentioned cutting-plane algorithm works for the NDAS functions.
However, the conclusion of the proposition is not true for a general concave function. For a counter example, see Example \ref{CoE} in Appendix \ref{appendix}.  To be able to perform a cutting-plane algorithm in the $w$-space, we modify the definition of cutting hyperplanes. In the next two propositions, we introduce a new set of cutting-planes.

\begin{proposition} \label{W-l5}
For every point $Y^0s^0 \in W$, there exists a hyperplane $P$ passing through it such that: \newline
1- P contains all the points in $W_{s^0}$, and \newline
2- P cuts $W_{y^0}$ the same way as $(g^0)^{\top}(Y^0)^{-1}(w-Y^0s^0)=0$ cuts it; the intersections of P and $(g^0)^{\top}(Y^0)^{-1}(w-Y^0s^0)=0$ with $W_{y^0}$ is the same, and the projections of their normals onto $W_{y^0}$ have the same direction.
\end{proposition}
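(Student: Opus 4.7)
The plan is to reduce the proposition to the solvability of a linear system for the normal $n$ of the hyperplane. Writing the desired hyperplane as $P := \{w : n^{\top}(w - w^0) = 0\}$ with $w^0 = Y^0 s^0$, Condition 1 is equivalent to $n \perp L_s$, where $L_s$ denotes the tangent space to $W_{s^0}$ at $w^0$; Condition 2 (equal intersection with $\aff(W_{y^0})$ and projections of normals onto $L_y$ having the same direction, in fact taken equal) is equivalent to $(n-d) \perp L_y$, where $d := (Y^0)^{-1}g^0$ and $L_y$ is the tangent space to $W_{y^0}$ at $w^0$. So it suffices to show this linear system in $n$ is solvable and to exhibit a solution.

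First, I would compute $L_s$ and $L_y$ from the analytic-center system \eqref{analytic center}. Fixing $s = s^0$ forces $x = x^0$ (since $A$ has full column rank) and leaves $y$ varying in $\mathcal{N}(A^{\top})$ subject to $e^{\top}S^0 y = 1$; hence
\[
L_s = \{S^0 z : z \in \mathcal{N}(A^{\top}),\ (S^0 e)^{\top} z = 0\}, \qquad \dim L_s = m - n - 1.
\]
Fixing $y = y^0$ parameterizes $s = b - Ax$, with the normalization $(y^0)^{\top} s = 1$ being automatic thanks to $A^{\top} y^0 = 0$, so
\[
L_y = Y^0 \mathcal{R}(A), \qquad \dim L_y = n.
\]
Both subspaces sit inside $E := \{z \in \mathbb{R}^m : e^{\top} z = 0\}$, which is $(m-1)$-dimensional. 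Next, I would check $L_s \cap L_y = \{0\}$: if $S^0 z = Y^0 A \xi$ with $A^{\top} z = 0$, then $z = (S^0)^{-1} Y^0 A\xi$, and applying $A^{\top}$ gives $A^{\top}(S^0)^{-1} Y^0 A\, \xi = 0$. Since $(S^0)^{-1} Y^0$ is positive diagonal and $A$ has full column rank, this Gram-type matrix is positive definite, forcing $\xi = 0$. Combined with the dimension count $\dim L_s + \dim L_y = m - 1 = \dim E$, this yields $E = L_s \oplus L_y$.

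Finally, I construct $n$ explicitly. The condition $n \perp L_s$ translates, via $L_s = S^0(\mathcal{N}(A^{\top}) \cap (S^0 e)^{\perp})$, into $S^0 n \in \mathcal{R}(A) + \lin(S^0 e)$, i.e.
\[
n = (S^0)^{-1} A \mu + \lambda e \quad\text{for some } \mu \in \mathbb{R}^n,\ \lambda \in \mathbb{R}.
\]
Substituting into the $L_y$-condition $A^{\top} Y^0 (n - d) = 0$, and using $A^{\top} y^0 = 0$ together with $Y^0 d = g^0$, yields the normal equation
\[
A^{\top} Y^0 (S^0)^{-1} A\, \mu = A^{\top} g^0,
\]
which is uniquely solvable in $\mu$ by the same positive-definiteness argument; $\lambda$ remains free. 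Then $P := \{w : n^{\top}(w - w^0) = 0\}$ is the desired hyperplane: condition~1 holds by construction of $n$, and condition~2 holds because $n - d \in L_y^\perp$ forces the two linear functionals $n^{\top}(\cdot - w^0)$ and $d^{\top}(\cdot - w^0)$ to agree on $\aff(W_{y^0})$, which in particular makes the cuts coincide there and their normals have the same projection onto $L_y$. The main delicacy I anticipate is the bookkeeping between ``orthogonality in $\mathbb{R}^m$'' versus ``within $E$'', since both tangent spaces live inside $E$; this is handled cleanly by carrying the free scalar $\lambda$, which encodes precisely the freedom of adding multiples of $e$ without altering the cut on $\aff(W)$.
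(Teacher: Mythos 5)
Your proposal is correct and follows essentially the same route as the paper: both reduce condition 1 to taking the normal in $(S^0)^{-1}\mathcal{R}(A)$ (your extra $\lambda e$ term is the harmless freedom of translating the functional on $\aff(W)$) and condition 2 to the normal equation $A^{\top}Y^0(S^0)^{-1}A\,\mu = A^{\top}g^0$, solved via positive definiteness of that matrix. Your additional observations --- the explicit identification of the tangent spaces $L_s$ and $L_y$ and the decomposition $E = L_s \oplus L_y$ --- are a nice sanity check but not needed beyond what the paper's argument already uses.
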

\begin{proof}
Assume that $w^0=Y^0s^0$ is the point that is chosen and let $u^0$ be the normal vector to the desired hyperplane $P$. First, we want the hyperplane to contain $W_{s^0}$. This means that for all centric $\hat{y}$, the vector $S^0y^0-S^0\hat{y}$ is on $P$, i.e., we have $(u^0)^{\top}S^0(y^0-\hat{y})=0$. Since $A^{\top}(y^0-\hat{y})=0$, we can put $u^0=(S^0)^{-1}Ah^0$ with an arbitrary $h^0$ and we have:
$$
(u^0)^{\top}S^0(y^0-\hat{y})=(h^0)^{\top}A^{\top}(S^0)^{-1}S^0(y^0-\hat{y})=0.
$$
Now, we want to find $h^0$ such that $(u^0)^{\top}(w-Y^0s^0)$ cuts $W_{y^0}$ the same way as \\ $(g^0)^{\top}(Y^0)^{-1}(w-Y^0s^0)$ cuts it. We actually want to find $h^0$ which satisfies the stronger property that $(u^0)^{\top}(w-Y^0s^0)=(g^0)^{\top}(Y_0)^{-1}(w-Y^0s^0)$ for all $w \in W_{y^0}$. All the points in $W_{y^0}$ are of the form $Y^0\hat{s}$, so we must have $(u^0)^{\top}Y^0(\hat{s}-s^0)=(g^0)^{\top}(\hat{s}-s^0)$. Since $(\hat{s}-s^0)$ is in the range of $A$, this equation is true for every $\hat s$ if and only if:
$$
(u^0)^{\top}Y^0Ax=(g^0)^{\top}Ax \ \Rightarrow \ ((u^0)^{\top}Y^0-(g^0)^{\top})Ax=0, \ \ \forall x \in \mathbb R^n.
$$
This means that $Y^0u^0-g^0$ must be in $\mathcal R(A)^{\bot}=\mathcal N(A^{\top})$, which means $A^{\top}(Y^0u^0-g^0)=0$. However, we had from above that $u^0=(S^0)^{-1}Ah^0$ and hence:
\begin{eqnarray} \label{normal-1}
A^{\top}Y^0u^0=A^{\top}g^0 \ \Rightarrow \ A^{\top}Y^0(S^0)^{-1}Ah^0=A^{\top}g^0 \ \Rightarrow \ h^0=(A^{\top}Y^0(S^0)^{-1}A)^{-1}A^{\top}g^0.
\end{eqnarray}
So, the hyperplane with normal vector $u^0=(S^0)^{-1}Ah^0$, where
$h^0=(A^{\top}Y^0(S^0)^{-1}A)^{-1}A^{\top}g^0$ has the required properties.
Since this hyperplane cuts $W_{y^0}$ the same way as $(g^0)^{\top}(Y^0)^{-1}(w-Y^0s^0)$ does, we conclude that $(u^0)^{\top}(Y^0s^{opt}-Y^0s^0) \geq 0$. Therefore, $Y^0s^{opt}$ is in the half-space $(u^0)^{\top}(w-Y^0s^0) \geq 0$.
\end{proof}
The normal of the hyperplane derived in Proposition \ref{W-l5} has a nice interpretation with respect to orthogonal projection and the primal-dual scaling $Y^{-1}S$. We have:
\begin{eqnarray} \label{u0-NPro}
u^0&=&(S^0)^{-1}A(A^{\top}Y^0(S^0)^{-1}A)^{-1}A^{\top}g^0 \nonumber \\
&=& (Y^0)^{-1/2}(S^0)^{-1/2} \nonumber \\ && \ \ \ \ \ \
\underbrace{[((Y^0)^{1/2}(S^0)^{-1/2}A) (A^{\top}Y^0(S^0)^{-1}A)^{-1} (A^{\top}(S^0)^{-1/2}(Y^0)^{1/2})]}_{\Pi} (Y^0)^{-1/2}(S^0)^{1/2}g_0 \nonumber \\
&=& (Y^0)^{-1/2}(S^0)^{-1/2} P (Y^0)^{-1/2}(S^0)^{1/2}g_0,
\end{eqnarray}
where $\Pi$ is the orthogonal projection onto the range of $(Y^0)^{1/2}(S^0)^{-1/2}A$.
Note that a main benefit of the hyperplane in Proposition \ref{W-l5} is that when we choose a point $Y^0s^0$, we can cut away $W_{s^0}$. Now, we prove the following proposition which shows we can cut the simplex with a sequence of hyperplanes such that the intersection of their corresponding half-spaces contain a point from $W_{s^{opt}}$.
\begin{proposition} \label{W-l6}
Assume that we choose the points $Y^0s^0, Y^1s^1 \in W$. The hyperplane $P$ passing through $Y^1s^1$, with the normal vector $u^1:=(S^1)^{-1}Ah^1$ , $h^1=(A^{\top}Y^0(S^1)^{-1}A)^{-1}A^{\top}g^1$ satisfies the following properties: \newline
1- P contains all the points in $W_{s^1}$, and \newline
2- $(u^1)^{\top}(Y^0s^{opt}-Y^1s^1) \geq 0$ for every feasible maximizer of $U(s)$.
\end{proposition}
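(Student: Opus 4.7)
The plan is to verify the two parts of the proposition by direct computation, exploiting the identities coming from the weighted-analytic-center system~\eqref{analytic center}, together with the supergradient inequality for $U$ at $s^{1}$.

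For part 1, I would start with an arbitrary point of $W_{s^{1}}$. By the definition of $W_{s^{1}}$ (points in the simplex whose $s$-vector equals $s^{1}$), any such point has the form $S^{1}y$, where $y$ is a centric $y$-vector with $s$-vector $s^{1}$; in particular $A^{\top}y = 0$. Then
\begin{equation*}
(u^{1})^{\top}(S^{1}y - Y^{1}s^{1}) \;=\; (h^{1})^{\top}A^{\top}(S^{1})^{-1}(S^{1}y - S^{1}y^{1}) \;=\; (h^{1})^{\top}A^{\top}(y - y^{1}) \;=\; 0,
\end{equation*}
because both $A^{\top}y = 0$ and $A^{\top}y^{1} = 0$. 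Hence $P$ contains every point of $W_{s^{1}}$.

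For part 2, the key step is the decomposition
\begin{equation*}
Y^{0}s^{opt} - Y^{1}s^{1} \;=\; Y^{0}(s^{opt}-s^{1}) \;+\; (Y^{0}-Y^{1})s^{1}.
\end{equation*}
For the second summand, I would use the fact that $(S^{1})^{-1}$ and $Y^{0}$ are diagonal and hence commute, so that $(S^{1})^{-1}Y^{0}s^{1} = y^{0}$. Combined with $(S^{1})^{-1}Y^{1}s^{1} = y^{1}$ and $A^{\top}y^{0} = A^{\top}y^{1} = 0$, this yields
\begin{equation*}
(u^{1})^{\top}(Y^{0}-Y^{1})s^{1} \;=\; (h^{1})^{\top}A^{\top}(y^{0}-y^{1}) \;=\; 0.
\end{equation*}
For the first summand, using $s^{opt}-s^{1} = A(x^{1}-x^{opt})$ and the defining relation $(A^{\top}Y^{0}(S^{1})^{-1}A)h^{1} = A^{\top}g^{1}$, I obtain
\begin{equation*}
(u^{1})^{\top}Y^{0}(s^{opt}-s^{1}) \;=\; (h^{1})^{\top}\bigl(A^{\top}Y^{0}(S^{1})^{-1}A\bigr)(x^{1}-x^{opt}) \;=\; (g^{1})^{\top}A(x^{1}-x^{opt}) \;=\; (g^{1})^{\top}(s^{opt}-s^{1}).
\end{equation*}
Finally, since $g^{1}\in\partial U(s^{1})$ and $s^{opt}$ is a maximizer, Theorem~\ref{T2} gives $(g^{1})^{\top}(s^{opt}-s^{1}) \geq U(s^{opt}) - U(s^{1}) \geq 0$. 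Adding the two summands yields the claim.

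The main obstacle, conceptually, is recognizing the correct algebraic re-grouping in part 2, particularly the identity $(S^{1})^{-1}Y^{0}s^{1} = y^{0}$ that kills the cross term $(Y^{0}-Y^{1})s^{1}$; without this observation, the expression $(u^{1})^{\top}(Y^{0}s^{opt}-Y^{1}s^{1})$ does not visibly reduce to a supergradient inequality. Once that cancellation is in place, the remainder is bookkeeping: the weighted normal equation for $h^{1}$ is designed precisely so that multiplication by the Hessian-like matrix $A^{\top}Y^{0}(S^{1})^{-1}A$ recovers $A^{\top}g^{1}$, which converts the $w$-space quantity back into an $s$-space supergradient inequality at $s^{1}$.
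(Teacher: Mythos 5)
Your proof is correct and follows essentially the same route as the paper's: part 1 is verified identically via $(u^{1})^{\top}S^{1}y=(h^{1})^{\top}A^{\top}y=0$ for centric $y$, and in part 2 your splitting $Y^{0}s^{opt}-Y^{1}s^{1}=Y^{0}(s^{opt}-s^{1})+(Y^{0}-Y^{1})s^{1}$ with the cancellation $(S^{1})^{-1}Y^{0}s^{1}=y^{0}$ is algebraically the same as the paper's observation that $(u^{1})^{\top}Y^{0}s^{1}=(u^{1})^{\top}Y^{1}s^{1}=0$, after which both arguments use the normal equation for $h^{1}$ to reduce to the supergradient inequality $(g^{1})^{\top}(s^{opt}-s^{1})\geq 0$.
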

\begin{proof}
As in the proof of Proposition \ref{W-l5}, if we set $u^1=(S^1)^{-1}Ah^1$, then the hyperplane contains all the points in $W_{s^1}$.
To satisfy the second property, we want to find $h^1$ with the stronger property that
\begin{eqnarray} \label{cuteq1}
(u^1)^{\top}(Y^0\hat{s}-Y^1s^1)=(g^1)^{\top}(\hat{s}-s^1),
\end{eqnarray}
for all the centric $\hat{s}$. The reason is that we already have $(g^1)^{\top}(s_{opt}-s^1) \geq 0$. By the choice of $u^1=(S^1)^{-1}Ah^1$, for every centric $y$ we have $$(u^1)^{\top}S^1y=(h^1)^{\top}A^{\top}(S^1)^{-1}S^1y=(h^1)^{\top}A^{\top}y=0.$$ So, we have $(u^1)^{\top}Y^1s^1=(u^1)^{\top}Y^0s^1=0$ and we can continue the above equation as follows:
\begin{eqnarray*}
(g^1)^{\top}(\hat{s}-s^1)&=&(u^1)^{\top}(Y^0\hat{s}-Y^1s^1)=(u^1)^{\top}(Y^0\hat{s}) \nonumber\\
&=&(u^1)^{\top}(Y^0\hat{s}-Y^0s^1) \nonumber\\
&=&(u^1)^{\top}Y^0(\hat{s}-s^1).
\end{eqnarray*}
Now we can continue in a similar way as in the proof of Proposition \ref{W-l5}. Since $(\hat{s}-s^0)$ is in the range of $A$, we must have:
$$
((u^1)^{\top}Y^0-(g^1)^{\top})Ax=0, \ \ \forall x \in \mathbb R^n.
$$
By the same reasoning, we have:
\begin{eqnarray} \label{normal-2}
A^{\top}Y^0u^1=A^{\top}g^1 \ \Rightarrow \ A^{\top}Y^0(S^1)^{-1}Ah^1=A^{\top}g^1 \ \Rightarrow \ h^1=(A^{\top}Y^0(S^1)^{-1}A)^{-1}A^{\top}g^1.
\end{eqnarray}
So, the hyperplane with normal vector $u^1=(S^1)^{-1}Ah^1$, where $h^1=(A^{\top}Y^0(S^1)^{-1}A)^{-1}A^{\top}g^1$ has the required properties.
\end{proof}
By Proposition \ref{W-l6}, we can create a sequence of points and hyperplanes such that the corresponding half-spaces contain $Y^0s^{opt}$.
The algorithm is as follows:

\textbf{$W$-space Algorithm:}
\begin{itemize}
\item{Step 1: Set $w^0=\frac{1}{m}e$ and find the $w^0$-centers $(x^0,y^0,s^0)$ with respect to ${\mathcal F}$.}

\item{Step 2: Set $k=0$, and $W_0=W$.}
\item{Step 3: If $s^k$ satisfies the optimality condition, return $(x^k,y^k,s^k)$ and \textbf{stop}.}
\item{Step 4: Find $g^{k}$, the supergradient of $U(s)$ at $s^{k}$. Find $h^{k}$ by solving the following
equation
\begin{eqnarray}
A^{\top}Y^0(S^{k})^{-1}Ah^{k}=A^{\top}g^{k}.
\end{eqnarray}
}
\item{Step 5: Set $u^{k}=(S^{k})^{-1}Ah^k$ and $W_{k+1}=W_{k} \cap \{ w: \ (u^{k})^{\top}(w-w^k) \geq 0 \}$. Pick a point $w^{k+1}$ from $W_{k+1}$ (see subsection \ref{convergence}) and find the $w^{k+1}$-center $(x^{k+1},y^{k+1},s^{k+1})$ with respect to $\mathcal F$. Set $k=k+1$ and return to Step 3.}
\end{itemize}

A clear advantage of this algorithm over the one in the $s$-space is that we do not have to increase the dimension of the $w$-space at each pass and subsequently we do not have to assign weights to the newly added constraints. So, the above algorithm is straightforward to implement. The convergence of the algorithm depends on the choice of $w^{k+1}$ in Step 5, which we discuss in Subsection \ref{convergence}. We can also use  the properties of the weighted center we derived in Appendix \ref{appendix-1} to improve the performance of the algorithm. 

\subsection{Some implementation ideas}
In the previous subsections, we introduced an algorithm that is highly cooperative with the DM and proved many interesting features about it. In this subsection, we set forth some implementation ideas.
\subsubsection{Driving factors}
As we mentioned, one of our main criticisms of classical robust optimization is that it is not practical to ask the DM to specify an $m$-dimensional ellipsoid for the uncertainty set. Our approach improves this situation by asking easier questions. 
The idea is similar to those used in the area of multi-criteria optimization. Consider the system of inequalities $Ax \leq b$ and the corresponding slack vector $s=b-Ax$ representing the problem. The DM might prefer to directly consider
only a few factors that really matter, we call them \emph{Driving Factors}. For example, the driving factors for a DM might be budget amount, profit, allocated human resources, etc. We can represent $k$ driving factors by $(c^i)^{\top}x$, $i\in \{1,\ldots,k\}$, and the problem for the DM is to maximize the utility function $U((c^1)^{\top}x,\ldots,(c^k)^{\top}x)$. Similar to the way we added the objective of the linear program to the constraints, we can add $k$ constraints to problem and write \eqref{main p} as:
\begin{eqnarray} \label{main p-m}
\max &&  U(\xi_1,\ldots,\xi_k) \nonumber \\
s.t. && \xi_i=\hat{b}_i-(c^i)^{\top}x, \ \  \xi \geq 0, \ \ i\in \{1,\ldots,k\} \nonumber \\
 &&     s= b-Ax, \ \ s \geq 0. 
\end{eqnarray}
As can be seen, the supergradient vector has only $k$ nonzero elements which makes it much easier for the DM to specify it for $k \ll m$.
$k$ is usually very small and we can figure out approximate gradients by asking pair-wise comparison questions among the driving factors. However, it still may have the problem that 
the cutting plane algorithm is in a high-dimensional space and it might be slow. We can take one step further to resolve this difficulty. 

\noindent Consider the following setup: 
\begin{itemize}
\item A very large  system of equalities and inequalities $Ax+s=b$, $s \geq 0$. 
\item A very small driving factor system in the space of $\xi$ `variables. Our goal is to solve problem  
\begin{eqnarray} \label{main p-m-2}
\max && U(\xi_1,\ldots,\xi_k) \nonumber \\
s.t. && \xi \in B_{\xi},
\end{eqnarray}
where 
$B_\xi:=\left\{ \xi \in \mathbb R^k_+: \xi_i=\hat{b}_i-(c^i)^{\top}x, i\in \{1,\ldots,k\}, Ax \leq b \right\}$
and, without loss of generality, we may assume that $ U(\xi_1,\ldots,\xi_k)$ is a monotone non-decreasing function of $\xi_1, \ldots, \xi_k$.
\item A matrix $C$ and a vector $d$ such that $\xi = C s+d$. 
\item A matrix $\bar C$ that translates a displacement in the driving factor space $d_{\xi}$ to a displacement in the $s$-space $d_s$, i.e., $d_s=\bar C d_\xi$. Note that considering $d_\xi=Cd_s$, there are infinite number of choices for $\bar C$ and finding the most effective one can be done by an optimizer/expert. The system $d_s=\bar C d_\xi$ actually showing how to change big space variables when there is a change in the driving factors. For example, assume that $d_\xi$ requires decreasing workforce in a retail corporation with several branches. The change in the workforce for each individual branch, embedded in $d_s$, should be done by an optimizer/expert. However, to mention one possibility, we may consider the pseudoinverse of $C$ as $\bar C:= C^\top (CC^\top)^{-1}$.
\end{itemize}
\eqref{main p-m-2} is a problem in a $k$-dimensional space (say, $k \in \{1,2,\ldots,20\}$)  and can be solved efficiently 
with our cutting-plane algorithms. Assume that at the $k$'th iteration we have a feasible slack vector $s^k$ in the big space and a feasible slack vector $\xi^k$ in the very small driving factor space, and by applying our algorithm in the driving factor space, we get a search direction $d_\xi$. Using $\bar C$ we get $d_s:=\bar Cd_\xi$ to update $s^{k+1}=s^k+\alpha d_s$ for an appropriate $\alpha>0$. Algorithm in the $w_\xi$-space stops quickly, and we have a good estimate of the optimal weights in $w$-space.

The DM deals only with problem \eqref{main p-m-2} directly, however, an optimizer/expert needs to translate the cuts (and the information extracted from the DM) in $w_\xi$-space into changes in the current assignment of weights in the big $w$-space and coordinate the search between the $w_\xi$-space and $w$-space (see Figure \ref{Fig_DF1}). 
\begin{figure}[ht]
\centering
\includegraphics[height=1.5in, width = 6in]{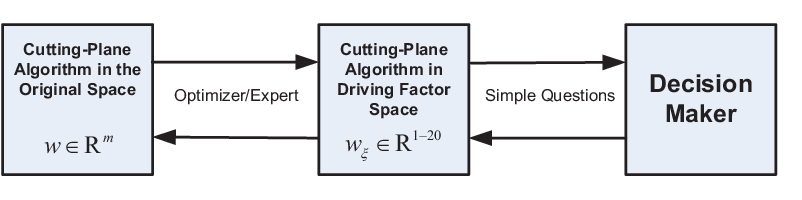}
\caption {Diagram for the driving factor approach.}
\label{Fig_DF1}
\end{figure}

\subsubsection{Approximate gradients}

In the previous subsection, we derived a cutting-plane algorithm in the $w$-space. As can be seen from Propositions \ref{W-l5} and \ref{W-l6}, for the implementation we need the supergradients of the utility function $U(s)$. However, we usually do not have an explicit formula for $U(s)$ and our knowledge about it comes from the interaction with the DM.
Supplying supergradient information on preferences (i.e., the utility function)
might still be a difficult task for the DM.
So, we have to simplify our questions for the DM and
try to adapt our algorithm accordingly.

We try to derive approximate supergradients based on simple questions from the DM. The idea is similar to the one used by Arbel and Oren in \cite{Ardel-Oren}. Assume that $U(s)$ is differentiable which means the supergradient at each point is unique and equal to the gradient of the function at that point. Assume that the algorithm is at the point $s$. By Taylor's Theorem (first order expansion) for arbitrarily small scalars $\epsilon_i >0$ we have:
\begin{eqnarray} \label{app-1}
u_i := U(s+\epsilon_i e_i) \approx U(s) + \frac{\partial U(s)}{\partial s_i} \epsilon_i \nonumber \\
\Rightarrow  \frac{\partial U(s)}{\partial s_i} \approx \frac{u_i-u_0}{\epsilon_i},  \ \ \ \ \  u_0:= U(s).
\end{eqnarray}
Assume that we have $m+1$ points $s$ and $s+\epsilon_i e_i$, $i \in \{1,\ldots,m\}$. By the above equations, if we have the value of $U(s)$ at these points, we can find the approximate gradient. But in the absence of true utility function, we have to find these values through proper questions from the DM. Here, we assume that we can ask the DM about the relative preference for the value of the function at these $m+1$ points. For example, DM can use a method called Analytic Hierarchy Process (AHP) to assess relative preference. We use these relative preferences to find the approximate gradient.

Assume that the DM provides us with the priority vector $p$, then we have the following relationship between $p$ and $u_i$'s
\begin{eqnarray} \label{app-2}
&&\frac{u_i}{u_j}=\frac{p_i}{p_j}, \ \ \ \ i,j \in \{0,\ldots,m\}, \nonumber \\
\Rightarrow&&\frac{u_i-u_0}{u_0}=\frac{p_i-p_0}{p_0}, \nonumber \\
\Rightarrow&& u_i-u_0 = \beta_0 (p_i-p_0),    \ \ \ \ \beta_0:=\frac{u_0}{p_0} .
\end{eqnarray}
Now, we can substitute the values of $u_i-u_0$ from \eqref{app-2} into \eqref{app-1} and we have
\begin{eqnarray} \label{app-3}
\nabla U(s)=\beta_0 \left [\frac{p_1-p_0}{\epsilon_1} \ \ \ \cdots \ \ \ \frac{p_m-p_0}{\epsilon_m} \right ]^{\top}.
\end{eqnarray}
The problem here is that we do not have the parameter $\beta_0$. However, this parameter is not important in our algorithm because we are looking for normals to our proper hyperplanes and, as it can be seen in Propositions \ref{W-l5} and \ref{W-l6}, a scaled gradient vector can also be used to calculate $h^0$ and $h^1$.  Therefore, we can simply ignore $\beta_0$ in our algorithm.

Note that supergradients may be approximate due to the imperfect nature of the interaction
with the DM.  However, this issue gives us an opportunity to highlight
another advantage of our approach when
compared to classical robust optimization.  Small errors in the
determination of uncertainty
regions in classical robust optimization may change the solution set
rather dramatically or even
make the underlying problem infeasible.  In our approach, however flawed
the supergradient information, as long as the halfspace defined by it
contains an optimal solution or an approximately optimal solution, our
algorithms are guaranteed to perform well. Therefore, this approximation in data is more serious in classical robust optimization that the DM needs to specify the whole uncertainty region, versus our approach wherein the supergradient is basically the normal to the halfspace used in reducing
the set of weights under consideration.

\subsection{Convergence of the algorithm} \label{convergence}
In this subsection, we focus on the convergence results for the $w$-space algorithm as our main algorithm. Note that no matter what the problem is, our cutting plane algorithm is applied to the unit simplex in the $w$-space. This makes the analysis straightforward and lets us use many results from the literature. On the other hand, if we use the driving factor scheme introduced above, our weight space has always dimension $k \leq 20$ and cutting plane algorithms become really fast. We define $W_O$ as the set of all weights $w$ such that the weighted center of $w$ is acceptable to the DM. In all cutting plane algorithms, a ``center" of the shrunken space must be chosen as the test point, which is crucial in the speed and convergence results of cutting plane algorithms.

Introduction of cutting-plane algorithms goes back at least to the 1960's and one of the first appealing ones is the center of gravity version \cite{gravity-1}. The center of gravity algorithm has not been used in practice because computing the center of gravity, in general, is difficult. However, it is noteworthy due to its theoretical properties.  For example,
Gr\"{u}nbaum \cite{grun} proved that by using any cutting-plane
through the center, more than $1/\exp(1) \approx 0.3678$ of the feasible set is cut out. Such results guarantee a geometric convergence rate with a sizeable constant \cite{Khachiyan,Grotschel}.
Many different types of centers have been proposed in the literature.
A group of algorithms use the center of a specific localization set,
which is updated at each step. One of them is the ellipsoid method
\cite{ellip}  where the localization set is represented by an ellipsoid
containing an optimal solution. Ellipsoid method can be related to our algorithm as we can use it to find the new weight vectors at each iteration.
Another family of cutting-plane algorithms are based on \emph{volumetric barriers} or \emph{volumetric centers} \cite{Vaidya, Vaidya-Atkin, Anst}. Vaidya used the volumetric center to design a new algorithm for minimizing a convex
function over a convex set \cite{Vaidya}. More sophisticated algorithms
have been developed based on Vaidya's volumetric cutting plane method \cite{Vaidya-Atkin, Anst}. Let us summarize  the above discussions about the three centering methods in a theorem:
\begin{theorem}  \label{thm:con-1}
	Assume that at Step 5 of the $w$-space algorithm, we set $w^{k+1}$ as one of the three following centers of $W^{k+1}$:
	\begin{itemize}
		\item the center of gravity, 
		\item the center of the minimum volume ellipsoid containing $W^{k+1}$, 
		\item the volumetric center. 
	\end{itemize}
	Also assume that $W_O$ contains a ball of radius $\epsilon$. Then, using the driving factor approach with $k=O(1)$, the algorithm stops in $O\left(\ln\left(\frac 1\epsilon \right)\right)$ iterations, with a solution acceptable to the DM. 
\end{theorem}
The cutting-plane method which is most relevant to our algorithm and is more efficient in practice is the analytic-center cutting plane method (ACCPM), see \cite{ana-sur} for a survey. In this method, the new point at each iteration is an approximate analytic center of the remaining polytope.
The complexity of such algorithms has been widely studied in the literature \cite{ana-1,ana-2}. Let us prove the following theorem: 
\begin{theorem}\label{thm:con-2}
	Assume that at Step 5 of the $w$-space algorithm, we calculate $w^{k+1}$ as the analytic center of $W^{k+1}$. Also assume that $W_O$ contains a ball of radius $\epsilon$. Then, using the driving factor approach with $k=O(1)$, the algorithm stops in $O^*\left(\frac{1}{\epsilon^2}\right)$ iterations with a solution acceptable to the DM, where $O^*$ means we ignore some logarithmic terms. 
\end{theorem}
\begin{proof}
	We use existing proved results in \cite{ana-2} and  \cite{ana-sur}.
	Consider the proof in Section 4 of \cite{ana-sur} for feasibility version of the analytic center cutting plane algorithm. The considered problem is finding $w \in C \cap [0,1]^n$, where $C$ is a closed convex set and contains a ball of radius $\epsilon$. $C$ is also equipped with an oracle that returns a cutting plane $\langle a, w-\bar w \rangle \geq 0$ whenever $\bar w \notin C$. 
	Note that we designed our approach so that our weight vectors are from the unit simplex, so $0 \leq w \leq e$. If we let $C=W_O$, it has all the required properties we mentioned. Therefore, all the discussions are carried forward and  we have the $O^*\left(\frac{n^2}{\epsilon^2}\right)$ iterations bound. However, as we use the driving factor approach, we further have $n=k=O(1)$. Hence, for our approach, the complexity bound is $O^*\left(\frac{1}{\epsilon^2}\right)$. 
\end{proof}
An alternative way to interpret the convergence properties above is after at most $O\left(\ln\left(\frac 1\epsilon \right)\right)$ iterations (ellipsoidal center algorithm) or $O^*\left(\frac{1}{\epsilon^2}\right)$ iterations (analytic center algorithm) our current iterate is within an $\epsilon$-neighborhood of a weight vector in $W_O$.  \cite{ana-sur} also has a discussion on how to modify the complexity if we use the approximate analytic center. Note that by some cut elimination and a complicated analysis, the authors in \cite{atkinson} proved a stronger convergence result for ACCPM that we mention for our approach in the following remark:
\begin{remark}
	If we use the cut elimination approach of \cite{atkinson}, we can improve the convergence result in Theorem \ref{thm:con-2} to $O\left(\ln^2\left(\frac 1\epsilon \right)\right)$ iterations. 
\end{remark}

\section{Illustrative preliminary computational experiments} \label{num}
In this section, we present some numerical results to illustrate the performance of the algorithms in the $w$-space designed in Section \ref{alg}. As we mentioned in previous sections, the utility function is not assumed to be explicitly available in our approach. So, for computational experiments with our algorithms, we maintain the same assumption. We choose a utility function; however, the algorithm does not ``see" the utility function we chose. The algorithm interacts with the utility function only through the supergradient oracle.   LP problems we use are chosen from the NETLIB library of LPs. Most of these LP problems are not in the format we have used throughout the paper which is the standard inequality form. Hence, we convert each problem to the standard equality form and then use the dual problem. In this section, the problem $\max \{ (c^{(0)})^{\top}x :  \ Ax \leq b^{(0)} \}$ is the converted one. In the following, we consider several numerical examples.

\textbf{Example 1:} In this example, we consider a simple problem of maximizing a quadratic function.  Consider the ADLITTLE problem (in the converted form) with 139 constraints and 56 variables. We apply the algorithm to function $U_{ij}(s)=-(s_i-s_j)^2$ which makes two slack variables as close as possible. This function may not have any practical application, however, shows a simple example difficult to solve by classical robust optimization.

The stopping criteria is $\|g\| \leq 10^{-6}$. For $U_{23}$ the algorithm takes 36 iterations and returns \mbox{ $U_{23}=-5 \times10^{-11}$}. For $U_{34}$ the algorithm takes 35 iterations and returns $U_{34}=-2.4\times10^{-12}$.

\textbf{Example 2:} Consider the ADLITTLE problem and assume that three constraints $\{68,71,74\}$ are important for the DM. Assume that the DM estimates that there is $20$ percent uncertainty in the RHS of these inequalities. We have $(b_{68},b_{71},b_{74})=(500,493,506)$ and so the desired slack variables are around $(s_{68},s_{71},s_{74})=(100,98,101)$.
By using the classical robust optimization method that satisfies the worst case scenario, the optimal objective value is $obj_{c}=1.6894 \times 10^5$.

Now assume that the following utility function represents DM's preferences:
$$
U_1(s)=t_{68}\ln(s_{68})+t_{71}\ln(s_{71})+t_{74}\ln(s_{74})+t_{m}\ln(s_{m}).
$$
This function is a NDAS function that we defined in Definition \ref{NDAS}. Assume that the DM set $t_m=10$ and \mbox{$t_{68}=t_{71}=t_{74}=1$}. By using our algorithm, we get the objective value of \mbox{$obj_1=1.7137 \times 10^5$} with the slack variables
$(s_{68},s_{71},s_{74})=(82,83,132)$. As we observe, the objective value is higher than the classical robust optimization method while two of the slack conditions are not satisfied. However, the slack variables are close to the desired ones. If the
DM sets $t_m=20$, we get the objective value of $obj_2=1.9694 \times 10^5$ with the slack variables
$(s_{68},s_{71},s_{74})=(40,41,79)$. However, all the iterates might be interesting for the DM. The following results are also returned by the algorithm before the optimal one:
\begin{eqnarray*}
&&obj_3=1.8847 \times 10^5, \ \  (s_{68},s_{71},s_{74})=(56,58,83), \\
&&obj_4=1.7 \times 10^5, \ \  (s_{68},s_{71},s_{74})=(82,84,125).
\end{eqnarray*}

Now assume that the DM wants to put more weight on constraints 68 and 71 and so set \mbox{$t_{68}=t_{71}=2$}, $t_{74}=1$ and $t_m=20$.
In this case, the algorithm returns $obj_5=1.8026 \times 10^5$ with the slack variables
$(s_{68},s_{71},s_{74})=(82,84,64)$.

\textbf{Example 3:} In this example, we consider the DEGEN2 problem (in the converted form) with 757 constraints and 442 variables. The optimal solution of this LP is $obj_1=-1.4352 \times 10^{3}$. Assume that constraints 245, 246, and 247 are important for the DM who wants them as large as possible, however, at the optimal solution we have $s(245)=s(246)=s(247)=0$.
The DM also wants the optimal objective value to be at least $-1.5 \times 10^{3}$. As we stated before, we add the objective function as a constraint to the system. To have the objective value at least $-1.5 \times 10^{3}$, we can add this constraint as $c^{\top}x=-1500+s_{m+1}$. For the utility function, the DM can use the NDAS function
$$
U(s)=\ln(s_{245})+\ln(s_{246})+\ln(s_{247}).
$$
By running the algorithm for the above utility function,
we get \\ $(s_{245},s_{246},s_{247})=(7.75,17.31,17.8)$ with objective value $obj_2 \approx -1500$ after 50 iterations and \\ $(s_{245},s_{246},s_{247})=(15.6,27.58,27.58)$ with $obj_3 \approx -1500$ after 100 iterations.

\textbf{Example 4:}  In this example, we consider utility functions introduced at the end of Appendix \ref{prob}. Consider problem SCORPION with optimal objective value of $obj_1=1.8781 \times 10^{3}$.  Assume that the uncertainty in constraints 211 to 215 are important for the DM and we have $\|\Delta b_i \|_1 = 0.7 b_i^{(0)}$, $i \in \{211, \ldots, 215\}$, where $\Delta b_i$ was defined in \eqref{uncer_set}. Let $\hat{x}$ be the solution of MATLAB's LP solver, then we have $s_{211}= \cdots=s_{215} =0$ which is not satisfactory for the DM.
Besides, assume that the DM wants the objective value to be at least $1800$. To satisfy that, we add the $(m+1)$th constraint as $s_{m+1}=-1800+(c^{(0)})^{\top}x$ which guarantees $(c^{(0)})^{\top}x \geq 1800$. For the utility function, first we define $u_i(s_i)$,  $i \in \{211, \ldots, 215\}$ similar to Figure \ref{Fig-utility} with $\epsilon_i^1 = \|\Delta b_i \|_1 = 0.7 b_i^{(0)}$ and $\epsilon_i^2 = \infty$. So we have for $i \in \{211, \ldots, 215\}$:
\begin{eqnarray}
u_i(s_i)=
\left\{
\begin{array}{lr}
s_i &  \ \ \ s_i < \|\Delta b_i \|_1 \\
\|\Delta b_i \|_1 & \ \ \ s_i \geq \|\Delta b_i \|_1.
\end{array}
\right.
\end{eqnarray}
Now, we can define $U(s) := \sum_{i=211}^{215} \ln u_i(s_i)$. By running the algorithm, the supergradient goes to zero after 65 iterations and the algorithm stops. Denote the solution by $x^*$, then the results are as follows:
\begin{eqnarray}
&& (c^{(0)})^{\top} x^* =  1800.3, \nonumber \\
&& b_{211}^{(0)}= 3.86, \ \ b_{212}^{(0)}= 48.26, \ \ b_{211}^{(0)}= 21.81, \ \ b_{211}^{(0)}= 48.26, \ \ b_{211}^{(0)}= 3.86,  \nonumber  \\
&& s_{211}^*= 3.29, \ \ s_{212}^*= 19.47, \ \ s_{211}^*= 7.39, \ \ s_{211}^*= 16.97, \ \ s_{211}^*= 3.24.
\end{eqnarray}
Now, assume that the DM wants the objective value to be at least $1850$ and
the $(m+1)$th constraint becomes $s_{m+1}=-1850+(c^{(0)})^{\top}x$. In
this case, the norm of the supergradient reaches zero, after 104 iterations.
The norm of supergradients versus the number of iterations are shown in
Figure \ref{Fig_numerical_2} for these two cases.
Denote the solution after 100 iterations by $\bar{x}^*$, then we have:
\begin{eqnarray}
&& (c^{(0)})^{\top} \bar{x}^* =  1850, \nonumber \\
&& \bar{s}_{211}^*= 1.22, \ \ \bar{s}_{212}^*= 16.74, \ \ \bar{s}_{211}^*= 6.80, \ \ \bar{s}_{211}^*= 14.54, \ \ \bar{s}_{211}^*= 1.25.
\end{eqnarray}
\begin{figure}[ht]
\centering
\includegraphics[height=3.1in, width = 4.5in]{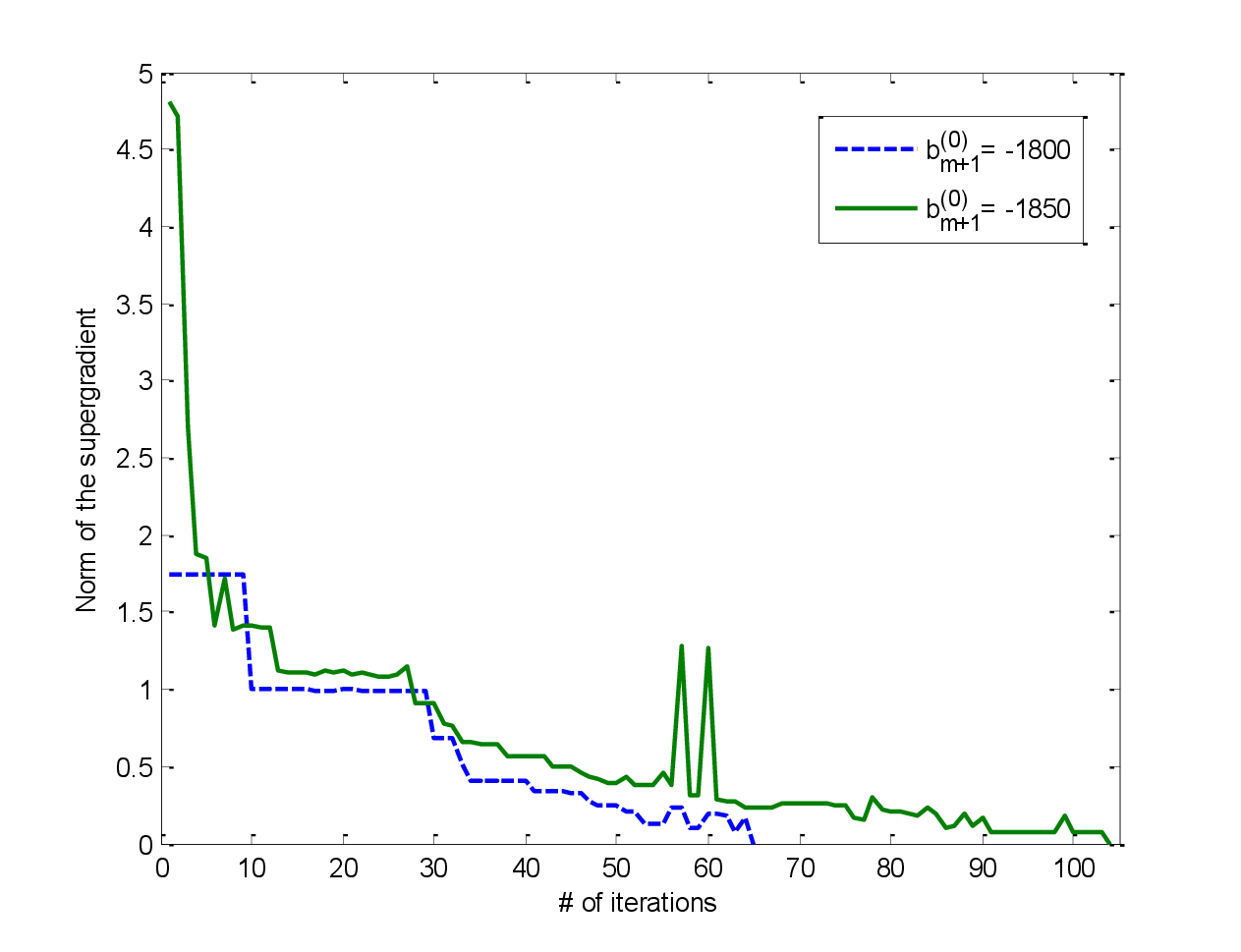}
\caption {Norm of the supergradient versus the number of iterations for Example 5.}
\label{Fig_numerical_2}
\end{figure}
Let $\bar{x}$ be the returned value in the second case
after 65 iterations. It is clearly not robust feasible;
however, we can use bound \eqref{B2} to find an upper bound
on the probability of infeasibility. Assume that $N=10$
and all the entries of $\Delta b_i$ are equal. Then,
bound \eqref{B2} reduces to $B(N,\delta_iN)$,
where $\delta_i=\frac{s_i}{ \|\Delta b_i \|_1}$.
The probabilities of infeasibility of $\bar{x}$
for constraints 211 to 215 are given in Table \ref{Table1}
(using bound \eqref{B2}).

\begin{table} [ht]
\label{Table1}
\begin{center}
\begin{tabular}{|r|r|r|}
\hline $i$& $\text{Pr}(\langle a_j,\bar{x} \rangle > \tilde{b}_j)$ \\
\hline $211$ &$0$ \\
\hline $212$ &$0.0827$ \\
\hline $213$ &$0.0018$ \\
\hline $214$ &$0.0866$ \\
\hline $215$ &$0$ \\
\hline
\end{tabular}
\caption{The probability of infeasibility of $\bar{x}$ for constraints 211 to 215.}
\end{center}
\end{table}

\section{Extensions and conclusion} \label{con}

\subsection{Extension to Semidefinite Optimization (SDP)} \label{sdp}

Semidefinite Programming is a special case of Conic Programming
where the cone is a direct product of semidefinite cones.
Many convex optimization problems can be modeled by SDP.
Since our method is based on a barrier function for a
polytope in $\mathbb R^{n}$, it can be generalized and used as an
approximation method for robust semidefinite programming that is
$NP$-hard for ellipsoidal uncertainty sets.
An SDP problem can be formulated as follows
\begin{eqnarray}
 & \sup & \langle \tilde{c},x\rangle,\nonumber \\
 &\text{s.t.}& \sum_{j=1}^{t_i}A^{(j)}_{i}x_{j}+S_{i}=\tilde{B}_{i},\ \ \ \forall i\in\{1,2,...,m\},\nonumber\\
 && S_{i}\succeq 0,\ \ \forall i\in\{1,2,\ldots,m\},\nonumber
\end{eqnarray}
where $A^{(j)}_{i}$ and $\tilde{B}_{i}$ are symmetric matrices of appropriate size,
and $\succeq$ is the L\"{o}wner order; for two square, symmetric matrices $C_1$ and $C_2$
with the same size, we have $C_1 \succeq C_2$ iff $C_1-C_2$ is a semidefinite matrix.
For every $i \in \{1, \ldots, m\}$, define
\begin{eqnarray*}
\mathcal F_i:= \{ x\in \mathbb R^n : \sum_{j=1}^{t_i}A^{(j)}_{i}x_{j} \preceq \tilde{B}_{i}\}.
\end{eqnarray*}
Assume that $\inte(\mathcal F_i) \neq \emptyset$ and let
$F_i : \inte(\mathcal F_i) \rightarrow \mathcal R$ be
a self-concordant barrier for $\mathcal F_i$. The typical
self-concordant barrier for SDP is
$F_i(x)=-\ln \left (\det \left (\tilde{B}_{i} - \sum_{j=1}^{t_i}A^{(j)}_{i}x_{j} \right) \right) $.  Assume
$$
\mathcal F := \bigcap_{i=1}^{m} \mathcal F_i
$$
is bounded and its interior is nonempty. Now,
as in the definition of the weighted center for LP, we can define a weighted center for SDP. For every $w \in \mathbb R^m_{++} $, we can define the weighted center as follows:
\begin{eqnarray}
\arg \min \left \{\sum_{i=1}^{m}w_i F_i(x) : x \in \mathcal F \right \}
\end{eqnarray}
The problem with this definition is that we do not have
many of the interesting properties we proved for LP.
The main one is that the weighted centers do not cover the
relative interior of the whole feasible region and
we cannot sweep the whole feasible region by moving
in the $w$-space. There are other notions of weighted centers
that address this problem; however, they are more difficult
to work with algorithmically. Extending the results we derived
for LP to SDP can be a good future research direction to follow.

\subsection{Quasi-concave utility functions} \label{quasi-concave}

The definition of the quasi-concave function is as follows:

\begin{definition}
A function $f : \ \mathbb R^n \rightarrow \mathbb R$ is \emph{quasi-concave} if its domain is convex, and for every $\alpha \in \mathbb R$, the set
$$
\{ x \in \dom f \ : \ f(x) \geq \alpha\}
$$
is also convex.
\end{definition}
All concave functions are quasi-concave, however, the converse is not true. Quasi-concave functions are important in many fields such as game theory and economics. In microeconomics, many utility functions are modeled as quasi-concave functions. For differentiable functions, we have the following useful proposition:
\begin{proposition}
A differentiable function $f$ is quasi-concave if and only if the domain
of $f$ is convex and for every $x$ and $y$ in $\dom f$ we have:
\begin{eqnarray} \label{quasi}
f(y) \geq f(x) \ \ \Rightarrow  \ \ (\nabla f(x))^{\top}(y-x) \geq 0
\end{eqnarray}
\end{proposition}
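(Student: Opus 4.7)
The plan is to reduce both directions of the equivalence to a one-dimensional statement about the restriction $\phi(t) := f(x + t(y-x))$ on $[0,1]$, which by the chain rule satisfies $\phi'(t) = \nabla f(x + t(y-x))^{\top}(y-x)$. Observe that the gradient condition on $f$ translates verbatim along any line in $\dom f$: whenever $\phi(s) \geq \phi(t)$ for $s,t \in [0,1]$, the condition applied to $u = x+t(y-x)$, $v = x+s(y-x)$ gives $\phi'(t)(s-t) \geq 0$. So it suffices to show the 1D equivalence for such $\phi$.

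For the forward direction, suppose $f$ is quasi-concave and $f(y) \geq f(x)$. The superlevel set $\{z \in \dom f : f(z) \geq f(x)\}$ is convex by definition of quasi-concavity and contains both $x$ and $y$, hence contains the entire segment. So $\phi(t) \geq \phi(0)$ for all $t \in [0,1]$, which yields $\phi'(0) \geq 0$, i.e., $\nabla f(x)^{\top}(y-x) \geq 0$.

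For the reverse direction, I would argue by contradiction. Assume \eqref{quasi} holds but $f$ is not quasi-concave, so there exist $x, y \in \dom f$ and $t^{*} \in (0,1)$ with $\phi(t^{*}) < \min(\phi(0), \phi(1))$. Set $\alpha := \min(\phi(0), \phi(1))$ and $V := \{t \in (0,1) : \phi(t) < \alpha\}$, an open set containing $t^{*}$. For any $t \in V$, applying the 1D gradient condition with $s = 0$ forces $\phi'(t) \leq 0$, and with $s = 1$ forces $\phi'(t) \geq 0$, so $\phi' \equiv 0$ on $V$. Letting $[p,q]$ denote the closure of the connected component of $V$ containing $t^{*}$, we obtain that $\phi$ is constant on $[p,q]$ with value $\phi(t^{*}) < \alpha$, and $0 < p \leq q < 1$.

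The main obstacle is extracting a contradiction from this, since $\phi' = 0$ on $V$ satisfies the gradient condition trivially. I would resolve it by looking just past $q$: by maximality of the component, $\phi(q+\epsilon) \geq \alpha > \phi(q)$ for all sufficiently small $\epsilon > 0$, and by continuity one can shrink $\epsilon$ so that also $\phi(q+\epsilon) < \phi(0)$. The mean value theorem then supplies $\xi \in (q, q+\epsilon)$ with $\phi'(\xi) = (\phi(q+\epsilon) - \phi(q))/\epsilon > 0$. But $\phi(0) > \phi(\xi)$ with $0 < \xi$, so the gradient condition forces $\phi'(\xi)(0-\xi) \geq 0$, i.e., $\phi'(\xi) \leq 0$, a contradiction. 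Therefore $f$ must be quasi-concave.
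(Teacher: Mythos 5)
The paper states this proposition without proof, treating it as a known fact from convex analysis (cf.\ \cite{convex-boyd}), so there is no in-paper argument to compare against and I assess yours on its own. Your overall strategy is the standard one: the reduction to the one-variable function $\phi$, the forward direction via convexity of the superlevel set $\{z:f(z)\ge f(x)\}$, and the first part of the reverse direction --- showing $\phi'\equiv 0$ on the open set $V=\{t\in(0,1):\phi(t)<\alpha\}$ by playing the gradient condition at $t$ against both endpoints, and hence that $\phi$ is constant, equal to $\phi(t^{*})<\alpha$, on the closure $[p,q]$ of the component containing $t^{*}$, with $0<p\le q<1$ --- are all correct.

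The gap is the sentence ``by maximality of the component, $\phi(q+\epsilon)\ge\alpha$ for all sufficiently small $\epsilon>0$.'' Maximality of the connected component $(p,q)$ of the open set $V$ tells you only that the endpoint $q$ itself does not belong to $V$; it says nothing about the points $q+\epsilon$, since $V$ could a priori contain points arbitrarily close to $q$ from the right, lying in other components. So the premise you feed into the mean value theorem is not justified by the reason you give, and the MVT step as written does not stand. Fortunately the contradiction is already in hand one line earlier and the MVT detour is unnecessary: since $q\in(0,1)$ and $q\notin V$ (an endpoint of a connected component of an open subset of $\mathbb{R}$ cannot lie in that set), the definition of $V$ forces $\phi(q)\ge\alpha$; but continuity and the constancy of $\phi$ on $[p,q]$ give $\phi(q)=\phi(t^{*})<\alpha$. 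Replacing your final paragraph with this observation closes the reverse direction, and with that repair the proof is complete.
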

\eqref{quasi} is similar to \eqref{for-quasi}, which is the property of the supergradient we used to design our algorithms. The whole point is that for a differentiable quasi-concave function $U(s)$ and any arbitrary point $s^0$, the maximizers of $U(s)$ are in the half-space \\ $(\nabla U(s^0))^{\top}(s-s^0) \geq 0$. This means that we can extend our algorithms to
differentiable quasi-concave utility functions simply by replacing supergradient with gradient, and all the results for $s$-space and $w$-space stay valid.

\subsection{Conclusion}
In this paper, we presented new algorithms in a framework for robust optimization designed to mitigate some of
the major drawbacks of robust optimization in practice. Our algorithms have the potential of increasing the applicability of robust optimization. Some of the advantages of our new algorithms are:

\begin{enumerate}
\item{Instead of a single, isolated, and very demanding interaction with the DM,
our algorithms interact continuously with the DM throughout the optimization process with
 more reasonable demands from the DM in each iteration. One of the benefits of our approach
is that the DM ``learns" what is feasible to achieve throughout the process. Another benefit is that
the DM is more likely to be satisfied (or at least be content) with the final solution. Moreover, being personally
involved in the production of the final solution, the DM bears some responsibility for it and is more likely
to adapt it in practice.}

\item{Our algorithms operate in the weight-space using only driving factors with the DM.
This helps reduce the dimension of the problem,
simplify the demands on the DM while computing the most important aspect of the problem at hand.}

\item{Weight-space and weighted-analytic-centers approach embeds a ``highly differentiable" structure into the
algorithms. Such tools are extremely useful in both the theory and applications of optimization. In contrast,
classical robust optimization and other competing techniques usually end up delivering a final solution where
differentiability cannot be expected.}
\end{enumerate}

Note that many elements of our approach can be partly utilized in other
approaches to robust optimization and decision making situations under
uncertainty.  Moreover, our work creates natural connections between robust optimization and
multi-attribute utility theory, elicitation methods used
in multi-criteria decision making problems and goal programming theory
(see \cite{KeeneyRaiffa1976,MorganHenrion1990,Ignizio1976}).

Developing similar algorithms for semidefinite programming is left as a future research topic.
As we explained in Subsection \ref{sdp}, we can define a similar notion of weighted center for SDP.
However, these weighted centers do not have some of the properties we used for LP, and we may have to
switch to other notions of weighted centers that are more difficult to work with algorithmically,
and have fewer desired properties compared to the LP setting.

\bibliographystyle{plain}




\nocite{*}

\appendix


\section{Probabilistic Analysis} \label{prob}

Probabilistic analysis is tied to robust optimization. One of the recent trends in robust optimization research is the attempt to try reducing conservatism to get better results, and at the same time keeping a good level of robustness. In other words, we have to show that our proposed answer has a low probability of infeasibility. In this section, we derive some probability bounds for our algorithms based on weight and slack vectors.  These bounds can be given to the DM with each answer and the DM can use them to improve the next feedback.

\subsection{Representing the robust feasible region with weight vectors} \label{weight-1}

Before starting the probabilistic analysis, want to relate the notion of weights to the parameters of the
uncertainty set. As we explained in Subsection \ref{notions}, we consider our uncertainty sets as follows:
\begin{eqnarray} \label{uncer_set}
B_i:=\left\{\tilde{b}_i \ : \ \exists \tilde{z} = (\tilde{z}_i^1,\ldots,\tilde{z}_i^{N_i})\in [-1,1]^{N_i} \ s.t. \
\tilde{b}_i=b_i^{(0)}+\sum_{l=1}^{N_i}{\Delta b_i^l \tilde{z}_i^l} \right \},
\end{eqnarray}
where $\{ \tilde{z}_i^l \}_{l=1}^{N_i}$, $i \in \{1,\ldots,m\}$ are independent random variables, and $\Delta b_i^l$ is the scaling factor of $\tilde{z}_i^l$. We assume that the support of $\tilde{z}_i^l$ contains $\tilde{z}_i^l=-1$, i.e., $Pr\{\tilde{z}_i^l=-1\} \ne 0$. Let us define another set which is related to the weight vectors:
\begin{eqnarray} \label{W}
\mathcal W := \left \{(w_1,\ldots,w_m): w_i \in [{y}_i(w) \| \Delta b_i \| _1, 1), \  \sum_{i=1}^m {w_i} = 1 \right \},
\end{eqnarray}
where $y(w)$ is the $y$-vector of $w$. Our goal is to explicitly specify a set of weights whose corresponding $w$-center makes the
feasible solution of the robust counterpart.
\begin {proposition}\label{22}
Let $x$ satisfy $Ax \leq \tilde{b}$ for every $\tilde{b} \in B_1 \times B_2 \times \cdots \times B_m$. Then there exists
some $w \in \mathcal W$, so that $x$ is the weighted analytic center with respect to the weight vector $w$, i.e.,
$x=x(w)$. In other words,
$$
\left \{ x \ : \ Ax \leq \tilde{b},\  \forall \tilde{b} \in B_1 \times B_2 \times \cdots \times B_m \right \}
\subseteq
\left \{ x(w) \ : \ w \in \mathcal W \right \}.
$$
\end {proposition}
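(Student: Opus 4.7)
The plan is to reduce the robust feasibility hypothesis to an explicit lower bound on each slack, and then produce a witnessing weight vector in $\mathcal W$. First, I would unravel the structure of each uncertainty set $B_i$: since $\tilde{b}_i = b_i^{(0)} + \sum_{l=1}^{N_i} \Delta b_i^l \tilde{z}_i^l$ with $\tilde{z}_i^l \in [-1,1]$, the minimum attainable value of $\tilde{b}_i$ over $B_i$ is $b_i^{(0)} - \sum_l |\Delta b_i^l| = b_i^{(0)} - \|\Delta b_i\|_1$, achieved by $\tilde{z}_i^l = -\mathrm{sign}(\Delta b_i^l)$. Robust feasibility, $Ax \leq \tilde{b}$ for all $\tilde{b} \in B_1 \times \cdots \times B_m$, therefore forces $\langle a^{(i)}, x \rangle \leq b_i^{(0)} - \|\Delta b_i\|_1$ for every $i$, i.e. the slack vector $s := b^{(0)} - Ax$ satisfies $s_i \geq \|\Delta b_i\|_1$ coordinatewise.

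Second, I would construct a weight vector $w$ such that $x = x(w)$ and $w \in W$. By the standing assumptions (bounded $\mathcal F$ with nonempty interior, full column-rank $A$, with the objective bound folded into $A$ as in \eqref{analytic center}), there exists $y > 0$ with $A^\top y = 0$; this follows from a standard theorem of the alternative applied to the recession cone of $\mathcal F$. Scaling $y$ by $1/(s^\top y) > 0$ preserves both $y > 0$ and $A^\top y = 0$, and ensures $e^\top (Sy) = s^\top y = 1$. Setting $w := Sy$, the triple $(x, y, s)$ satisfies system \eqref{analytic center} with $e^\top w = 1$, so $x$ is the (unique) $w$-center and $y = y(w)$ by the uniqueness statement following Definition \ref{centric}.

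Third, I would verify $w \in \mathcal W$. Combining the slack bound from step one with the identity $w_i = s_i y_i$ from step two yields
$$w_i \; = \; s_i\, y_i \; \geq \; \|\Delta b_i\|_1 \cdot y_i \; = \; y_i(w)\, \|\Delta b_i\|_1,$$
which is exactly the lower bound in the definition of $\mathcal W$. The upper bound $w_i < 1$ is automatic on the open simplex $W$ since $w_j > 0$ for all $j$ and $\sum_j w_j = 1$, and the normalization $\sum_i w_i = 1$ holds by construction. Thus $w \in \mathcal W$ and $x = x(w)$, as required.

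The main obstacle is really the existence of a strictly positive $y \in \mathcal N(A^\top)$; once this is established, the proposition reduces to the coordinatewise arithmetic $w_i = s_i y_i$ and the coordinatewise slack bound derived in the first step. The positivity of $y$ is essentially equivalent to the boundedness of $\mathcal F$ (combined with nonemptiness of its interior), which is a standing hypothesis, so no extra assumption is needed. No characterization of $x$ is used beyond robust feasibility, which is what gives the proof its clean subset-inclusion form.
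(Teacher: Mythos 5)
Your proof is correct and follows essentially the same route as the paper's: reduce robust feasibility to the coordinatewise slack bound $s_i \geq \|\Delta b_i\|_1 > 0$, then exhibit $w := Sy$ for a suitably normalized $y > 0$ in $\mathcal N(A^{\top})$ and check the two defining inequalities of $\mathcal W$. The only (minor) divergence is in how the normalized $y$ is produced: the paper takes $\hat{y}$ from the weighted center of an arbitrary simplex weight $\hat{w}$ and invokes Lemma \ref{first-l} to get $\sum_i s_i \hat{y}_i = \sum_i \hat{s}_i \hat{y}_i = 1$, whereas you obtain $y>0$ with $A^{\top}y=0$ directly from boundedness of $\mathcal F$ plus full column rank (a theorem of the alternative) and rescale by $1/(s^{\top}y)$, which is a slightly more self-contained way to achieve the same normalization.
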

\begin {proof}
Let $\hat{w}>0$ be an arbitrary vector such that $\sum_{i=1}^m \hat{w}_i =1$, and let $(\hat{x},\hat{y},\hat{s})$ be the weighted center corresponding to it. Assume that $x$ is in the robust feasible region; we must have $ \langle a_i,x \rangle \leq
b_i^{(0)} + \langle \Delta b_i,\tilde{z}_i \rangle$ for every $\tilde{z}_i$ with nonzero probability, particularly for
$\tilde{z}_i= -e$ where $e$ is all ones vector. So
$$
\langle a_i,x \rangle -  b_i^{(0)} \leq \langle \Delta b_i,\tilde{z}_i \rangle
=\langle \Delta b_i, - e \rangle = -  \| \Delta b_i \|.
$$
Define $s_i :=  b_i^{(0)} -  \langle a_i,x \rangle$. Thus, from the above equation, for every $i\in \{1,\ldots,m\}$ we have
$$
0 <  \| \Delta b_i \| _1  \leq s_i,
$$
and consequently $\hat{y}_i  \| \Delta b_i \| _1  \leq \hat{y}_i s_i$ using
the fact that $\hat{y}_i > 0$. For every $i\in \{1,\ldots,m\}$, we set
$$
w_i := \hat{y}_i s_i.
$$
Since $(x,\hat{y}, s)$ satisfies the optimality conditions, we have $x=x(w)$. It remains
to show that $w \in \mathcal W $. First note that:
$$
\sum_{i=1}^{m} w_i = \sum_{i=1}^{m} s_i \hat{y}_i = \sum_{i=1}^{m} \hat{s}_i \hat{y}_i = \sum_{i=1}^{m} \hat{w}_i=1,
$$
where for the second equality we used Lemma \ref{first-l}. Now, using the fact that $w_i \geq 0$ for every $i\in \{1,\ldots,m\}$, we have $w_i < \sum_{j=1}^m w_j =1$. We already proved that
$\hat{y}_i  \| \Delta b_i \| _1  \leq \hat{y}_i s_i=w_i$. These two inequalities prove that $w_i \in [ \hat{y}_i \| \Delta b_i \| _1, 1)$.
\end {proof}
The above proposition shows that when the robust counterpart problem with respect to the uncertainty set
$B_1 \times B_2 \times \cdots \times B_m$ is feasible, the set $\mathcal W$ is nonempty. In the next proposition we prove that the equality holds in the above inclusion.
\begin{proposition}\label{33}
(a)We have
$$
\{ x \ : \ Ax \leq \tilde{b},\  \forall \tilde{b} \in B_1 \times B_2 \times \cdots \times B_m \}
= \{x(w) \ : \ w \in \mathcal W \}.
$$
\\
(b) Assume that ${w}>0$ satisfies $\sum_{i=1}^m {w}_i =1$, and $y$ is its corresponding $y$-vector. For every $i \in \{1,\ldots,m\}$, we have \\
 $$w_i \geq {y}_i \|\Delta b_i \|_1 \Rightarrow \langle a_i,x(w) \rangle \leq \tilde{b}_i, \ \ \ \forall \tilde{b}_i \in B_i.$$
\end{proposition}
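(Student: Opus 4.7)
My plan is to first dispatch part (b), since part (a) will follow from combining part (b) with Proposition \ref{22}. For part (b), the key observation is that the weighted analytic center system \eqref{analytic center} gives $s_i(w) = w_i/y_i$ directly from $Sy = w$. So the hypothesis $w_i \geq y_i \|\Delta b_i\|_1$ immediately translates into the slack bound $s_i(w) \geq \|\Delta b_i\|_1$. Unwinding $s_i(w) = b_i^{(0)} - \langle a_i, x(w)\rangle$, this gives
$$\langle a_i, x(w)\rangle \leq b_i^{(0)} - \|\Delta b_i\|_1.$$

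Next I would check that the right-hand side is a uniform lower bound on $\tilde b_i$ over $B_i$. Any $\tilde b_i \in B_i$ can be written as $\tilde b_i = b_i^{(0)} + \sum_{l=1}^{N_i} \Delta b_i^l \tilde z_i^l$ with $\tilde z_i^l \in [-1,1]$, so by the triangle inequality
$$\tilde b_i \geq b_i^{(0)} - \sum_{l=1}^{N_i} |\Delta b_i^l| = b_i^{(0)} - \|\Delta b_i\|_1.$$
Chaining the two inequalities yields $\langle a_i, x(w)\rangle \leq \tilde b_i$ for every $\tilde b_i \in B_i$, which is exactly the conclusion of (b).

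For part (a), the inclusion $\subseteq$ is precisely Proposition \ref{22}. For the reverse inclusion $\supseteq$, I take any $w \in \mathcal W$; by the definition of $\mathcal W$ in \eqref{W}, the weight vector satisfies $w_i \geq y_i(w) \|\Delta b_i\|_1$ for every $i \in \{1,\ldots,m\}$. Applying part (b) coordinate by coordinate then shows $\langle a_i, x(w)\rangle \leq \tilde b_i$ for all $\tilde b_i \in B_i$ and every $i$, so $x(w)$ lies in the robust feasible region. This gives equality of the two sets.

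I do not anticipate any genuine obstacle; the main subtlety is simply to recognize that the assumption $\Pr\{\tilde z_i^l = -1\} \neq 0$ in \eqref{uncer_set} is what forces $\|\Delta b_i\|_1$ to be the sharp worst-case deviation on the negative side, and that $s_i(w) = w_i/y_i$ is the only bridge needed between the weight-space description and the slack-space description. Once those two are in hand, the proof is a short chain of inequalities.
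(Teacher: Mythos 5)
Your proof is correct and rests on exactly the same computation as the paper's: unwinding $w_i = s_i y_i$ to get $s_i \geq \|\Delta b_i\|_1$ and then bounding $\tilde b_i$ from below by $b_i^{(0)} - \|\Delta b_i\|_1$ over the box uncertainty set. The only difference is organizational — you prove (b) directly rather than by contradiction and then derive the $\supseteq$ inclusion of (a) from (b), whereas the paper proves that inclusion independently with the identical chain of inequalities — so this counts as essentially the same approach.
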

\begin{proof}
(a) $\subseteq$ part was proved in Proposition \ref{22}. For $\supseteq$, let $w \in \mathcal W$ and $({x},{y},{s})$ be its corresponding weighted center. By $w \in \mathcal W$ we have
$$
 {y}_i \| \Delta b_i \|_1 \leq w_i = s_iy_i =
(b_i^{(0)} -  \langle a_i,x \rangle) {y}_i \Longrightarrow
 \| \Delta b_i \|_1 \leq (b_i^{(0)} -  \langle a_i,x \rangle).
$$
Therefore, for all $\tilde{z}_i \in \times_{i=1}^m [-1,1]$,
$$
\langle a_i,x \rangle \leq b_i^{(0)}- \| \Delta b_i \|_1
\leq b_i^{(0)}- \sum_{l=1}^N{\Delta b_i^l \ \tilde{z}_i^l} = b_i^{(0)} + \langle \tilde{z}_i,\Delta b_i \rangle,
$$
which proves $x$ is a robust feasible solution with respect to the uncertainty set $B_1 \times B_2 \times \cdots \times B_m$. \\
(b) \
Assume that ${w}>0$ satisfies $\sum_{i=1}^m {w}_i =1$, $y$ is its corresponding $y$-vector, and there exists $i \in \{1,\ldots,m\}$ such that $w_i \geq {y}_i \|\Delta b_i \|_1$. If there exists $\tilde{b}_i \in B_i$ such that $\langle a_i,x(w) \rangle > \tilde{b}_i$ where $\tilde{b}_i=b_i^{(0)}+\sum_{l=1}^{N_i}{\Delta b_i^l \tilde{z}_i^l}$, by using $\tilde{z}_i^l \geq -1$ we have
\begin{eqnarray*}
\langle a_i,x(w) \rangle > \tilde{b}_i \ &\Rightarrow& \ \langle a_i,x(w) \rangle >  b_i^{(0)}+ \sum_{l=1}^{N_i}{\Delta b_i^l \ \tilde{z}_i^l}
\ \geq \ b_i^{(0)} - \sum_{l=1}^{N_i}{\Delta b_i^l} \\
&\Rightarrow&   \sum_{l=1}^{N_i}{\Delta b_i^l} > b_i^{(0)}- \langle a_i,x(w) \rangle = s_i(w) \\
&\Rightarrow&  y_i \sum_{l=1}^{N_i}{\Delta b_i^l} > y_i s_i(w) = w_i \geq {y}_i \sum_{l=1}^{N_i}{\Delta b_i^l} \\
&\Rightarrow& \sum_{l=1}^{N_i}{\Delta b_i^l} \ > \   \sum_{l=1}^{N_i}{\Delta b_i^l},
\end{eqnarray*}
which is a contradiction. We conclude that $\langle a_i,x(w) \rangle \leq \tilde{b}_i$ for all  $\tilde{b}_i \in B_i$.
\end{proof}

\subsection{Probability bounds}
Without loss of generality, we make the following assumptions on $\tilde{b}$ and $\tilde{c}$:
\begin{itemize}
\item{For every $i\in\{1,2,\ldots,m\}$, $\tilde{b}_{i}$ can be written as
$\tilde{b}_{i}=b_i^{(0)}+\sum_{l=1}^{N_i}{\Delta b_i^l \tilde{z}_i^l}$ where
$\{ \tilde{z}_i^l \}_{l=1}^{N_i}$ are independent random variables for every $i \in \{1,\ldots,m\}$.}
\item{For each $\tilde{c}_i$, $i \in \{1, \ldots, n\}$, we have $\tilde{c}_i=c^{(0)}_i+\sum_{l=1}^{N_{ic}}{\Delta c^l_i \tilde{z}_{ic}^l}$ where
$\{ \tilde{z}_{ic}^l \}_{l=1}^{N_{ic}}$ are independent random variables.}
\end{itemize}
As can be seen above, each variable $\tilde{b}_{i}$ is the summation of a nominal value $b_i^{(0)}$
with scaled random variables $\{ \tilde{z}_i^l \}_{l=1}^{N_i}$. In practice, the number of these random
variables $N_i$ is small compared to the dimension of $A$ as we explained above: each random variable $ \tilde{z}_i^l $
represents a major source of uncertainty in the system.

 Suppose we wish to find a robust feasible solution with respect to the uncertainty set $B_1 \times B_2 \times \cdots \times B_m$, where $B_i$ was defined in \eqref{uncer_set}. By Proposition \ref{33}, it is equivalent to finding the weighted center for a $w \in \mathcal W$, where $\mathcal W$ is defined in \eqref{W}. However, finding such a weight vector is not straight forward as we do not have an explicit formula for $\mathcal W$. Assume that we pick an arbitrary weight vector $w>0$ such that $\sum_{i=1}^m {w}_i =1$, with the weighted center $({x},{y},{s})$. Let us define the vector $\delta$ for $w$ as
$$
\delta_i=\frac{w_i}{y_i \|\Delta b_i \|_1}, \ \ \ i \in \{1,2,\ldots,m\},
$$
where $\Delta b_i$ was defined in \eqref{uncer_set}. For each $i \in \{1,\ldots,m\}$, if $1 \leq \delta_i$, by Proposition \ref{33}-(b) we have $\langle a_i,x(w) \rangle \leq \tilde{b}_i$ for all  $\tilde{b}_i \in B_i$. So, the problem is with the constraints that $1 > \delta_i$.
For every such constraint, we can find a bound on the probability that $\langle a_j,x(w) \rangle > \tilde{b}_j$.
As in the proof of Proposition \ref{33}-(b), in general we can write:
\begin{eqnarray} \label{m1}
\text{Pr} \{\langle a_j,x \rangle > \tilde{b}_j \} &=& \text{Pr} \left \{ -y_i \sum_{l=1}^{N_i}{\Delta b_i^l \ \tilde{z}_i^l} > w_i = y_i \delta_i \|\Delta b_i \|_1 \right \} \nonumber \\
&=& \text{Pr} \left \{ -\sum_{l=1}^{N_i}{\Delta b_i^l \ \tilde{z}_i^l} > \delta_i \|\Delta b_i \|_1 \right \}
\leq \text{exp} \left (-\frac{\delta_i^2 (\|\Delta b_i \|_1)^2}{2 \sum_{l=1}^{N_i}{(\Delta b_i^l)^2}} \right ),
\end{eqnarray}
where the last inequality is derived by using Hoeffding's inequality:
\begin{lemma}\label{H}
(\emph{Hoeffding's inequality}\cite{ineq}) Let
$v_{1},v_{2},\ldots,v_{n}$ be independent random variables with finite
first and second moments, and for every $i\in\{1,2,\ldots,n\}$,
$\tau_{i}\leq v_{i}\leq \rho_{i}$. Then for every $\varphi>0$
$$
\Pr\left \{\sum_{i=1}^{n}v_{i}-E\left(\sum_{i=1}^{n} v_{i}\right )\geq
n\varphi\right \}\leq\exp\left[\frac{-2n^{2}\varphi^{2}}{\sum_{i=1}^{n}
(\rho_{i}-\tau_{i})^{2}}\right].
$$
\end{lemma}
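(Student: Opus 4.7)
The plan is to follow the classical Chernoff/Markov route combined with Hoeffding's moment-generating-function lemma, then optimize the auxiliary parameter.

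First, I would apply the exponential Markov (Chernoff) trick: for any $t>0$, using monotonicity of $x \mapsto e^{tx}$,
\[
\Pr\!\left\{\sum_{i=1}^{n}\bigl(v_i - E[v_i]\bigr) \geq n\varphi\right\}
\;\leq\; e^{-tn\varphi}\, E\!\left[\exp\!\left(t\sum_{i=1}^{n}(v_i-E[v_i])\right)\right].
\]
By independence of the $v_i$, the expectation factorizes as $\prod_{i=1}^n E[e^{t(v_i-E[v_i])}]$, so the problem reduces to bounding the moment generating function of a single bounded, centered random variable.

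Next I would prove the key single-variable estimate (Hoeffding's lemma): if $\tau_i \le v_i \le \rho_i$ with mean $\mu_i$, then
\[
E\!\left[e^{t(v_i-\mu_i)}\right] \;\leq\; \exp\!\left(\tfrac{t^2(\rho_i-\tau_i)^2}{8}\right).
\]
The argument uses convexity of $e^{tx}$ on $[\tau_i,\rho_i]$ to write
\[
e^{tv_i} \;\leq\; \frac{\rho_i-v_i}{\rho_i-\tau_i}\,e^{t\tau_i}+\frac{v_i-\tau_i}{\rho_i-\tau_i}\,e^{t\rho_i},
\]
take expectations, then set $p=(\mu_i-\tau_i)/(\rho_i-\tau_i)$ and $h=t(\rho_i-\tau_i)$, reducing the inequality to $L(h) := -hp+\ln(1-p+pe^{h}) \le h^2/8$. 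A second-order Taylor expansion of $L$ about $h=0$ (noting $L(0)=L'(0)=0$ and $L''(h)\le 1/4$ since $L''(h)=q(1-q)$ for some probability $q\in[0,1]$) yields this bound. This moment generating function estimate is the main technical obstacle, but is standard once one identifies the right parametrization.

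Combining the two steps gives
\[
\Pr\!\left\{\sum_{i=1}^{n}\bigl(v_i-E[v_i]\bigr)\geq n\varphi\right\}
\;\leq\; \exp\!\left(-tn\varphi + \frac{t^2}{8}\sum_{i=1}^{n}(\rho_i-\tau_i)^2\right).
\]
Finally I would minimize the right-hand side over $t>0$. Differentiating the exponent and setting the derivative to zero yields $t^* = 4n\varphi / \sum_{i=1}^{n}(\rho_i-\tau_i)^2$; plugging back in produces the exponent $-2n^{2}\varphi^{2}/\sum_{i=1}^{n}(\rho_i-\tau_i)^{2}$, which is exactly the stated bound. Finiteness of the first and second moments of the $v_i$ is automatic from boundedness, so no additional integrability issues arise.
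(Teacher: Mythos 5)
Your proof is correct: the Chernoff bound, the moment-generating-function estimate $E[e^{t(v_i-\mu_i)}]\leq e^{t^2(\rho_i-\tau_i)^2/8}$ via convexity and the second-order Taylor bound $L''\leq 1/4$, and the optimization $t^*=4n\varphi/\sum_{i=1}^n(\rho_i-\tau_i)^2$ all check out and reproduce the stated exponent exactly. The paper offers no proof of its own here --- it cites Hoeffding's 1963 paper --- and your argument is precisely the classical one from that reference, so there is nothing to compare beyond noting the agreement.
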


Bertsimas and Sim \cite{price} derived the best possible bound, i.e., a bound that is achievable. The corresponding lemma proved in \cite{price} is as follows:
\begin{lemma} \label{price-3}
(a) If $\tilde{z}_i^l$, $l \in \{1,\ldots,{N_i}\}$, are independent and symmetrically distributed random variables in $[-1,1]$, $p$ is a positive constant, and $\gamma_{il} \leq 1$, $l \in \{1,\ldots,{N_i}\}$, then
\begin{eqnarray} \label{price-3-1}
\Pr \left \{\sum_{l=1}^{N_i}{\gamma_{il} \ \tilde{z}_i^l} \geq p \right \} \leq B({N_i},p),
\end{eqnarray}
where
$$
B({N_i},p)=\frac{1}{2^{N_i}} \left[(1-\mu){{N_i} \choose {\left\lfloor \nu \right\rfloor}} +\sum_{i=\left\lfloor \nu \right\rfloor +1}^{{N_i}}  {{N_i} \choose i} \right],
$$
where $\nu:=({N_i}+p)/2$, and $\mu:= \nu - \left\lfloor \nu \right\rfloor$. \\
(b) The bound in \eqref{price-3-1} is tight for $\tilde{z}_i^l$ having a discrete probability distribution: \\ $\Pr \{\tilde{z}_i^l=1\}=\Pr\{\tilde{z}_i^l=-1 \} =1/2$, $\gamma_{il} = 1$, $l \in \{1,\ldots,{N_i}\}$, an integral value of $p \geq 1$, and $p+{N_i}$ being even.
\end{lemma}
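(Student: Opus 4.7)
The plan is to follow the standard Bertsimas--Sim-type argument, reducing the tail probability to that of a pure Rademacher sum and then computing it explicitly. The symmetry of each $\tilde z_i^l$ on $[-1,1]$ means we may decompose $\tilde z_i^l \stackrel{d}{=} A_{il}\,\varepsilon_{il}$, where $A_{il}:=|\tilde z_i^l|\in[0,1]$ is independent of a Rademacher sign $\varepsilon_{il}\in\{-1,+1\}$. Setting $\beta_{il}:=\gamma_{il}A_{il}$, the hypothesis $|\gamma_{il}|\le 1$ gives $|\beta_{il}|\le 1$, and
$$
\Pr\!\left\{\sum_{l=1}^{N_i}\gamma_{il}\tilde z_i^l\ge p\right\}
=\mathbb{E}_{A}\!\left[\Pr\!\left\{\sum_{l=1}^{N_i}\beta_{il}\varepsilon_{il}\ge p\,\Big|\,A\right\}\right],
$$
so it is enough to bound $\Pr\{\sum_l \beta_l\varepsilon_l\ge p\}$ uniformly over all deterministic coefficients $|\beta_l|\le 1$.

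The first main step is to show that this Rademacher-weighted tail is maximized when every $|\beta_l|=1$. By the symmetry of the $\varepsilon_l$ we may assume $\beta_l\ge 0$. Fix all $\beta_j$ for $j\neq l$ and all sign patterns $(\sigma_j)_{j\neq l}$. Writing $R:=\sum_{j\neq l}\beta_j\sigma_j$, the number of $\sigma_l\in\{-1,+1\}$ with $\beta_l\sigma_l+R\ge p$ is either $0$, $1$, or $2$ depending on whether $R\ge p$, $R-\beta_l\ge p$, or neither; in all cases this count is nondecreasing as $\beta_l$ increases from $0$ to $1$. Summing over $(\sigma_j)_{j\neq l}$ and iterating the argument coordinate by coordinate gives
$$
\Pr\!\left\{\sum_{l=1}^{N_i}\beta_{il}\varepsilon_{il}\ge p\right\}\ \le\ \Pr\!\left\{\sum_{l=1}^{N_i}\varepsilon_{il}\ge p\right\}.
$$

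The second step is the explicit computation of the Rademacher tail and its identification with $B(N_i,p)$. Writing $X=\#\{l:\varepsilon_{il}=+1\}\sim\text{Bin}(N_i,\tfrac12)$ one has $\sum_l\varepsilon_{il}=2X-N_i\ge p$ iff $X\ge\nu=(N_i+p)/2$. When $N_i+p$ is even, $\nu$ is an integer and the tail equals $2^{-N_i}\sum_{i=\nu}^{N_i}\binom{N_i}{i}$, which matches $B(N_i,p)$ with $\mu=0$ and yields part~(b) at once: in the tight configuration of part~(b) all of the chain of inequalities above are equalities. When $N_i+p$ is non-integral or has different parity, the Rademacher sum skips the value $p$, and the extra $(1-\mu)\binom{N_i}{\lfloor\nu\rfloor}$ in $B(N_i,p)$ is an interpolation that dominates the actual discrete tail, so $\Pr\{\sum_l\varepsilon_l\ge p\}\le B(N_i,p)$; combined with the mixture bound this completes part~(a).

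The step I expect to be the main obstacle is the monotonicity argument, since it is exactly the place where one needs the combination of symmetry of the $\varepsilon_l$ and the sign-indifference of the tail event; careless fixing of signs can hide a case in which increasing $\beta_l$ turns a ``2'' into a ``1.'' A careful case analysis on the sign of $R-p$ resolves this, and no extra structural assumption on the $\tilde z_i^l$ (beyond symmetry and the support in $[-1,1]$) is needed.
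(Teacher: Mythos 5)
The paper itself offers no proof of this lemma---it is quoted verbatim from Bertsimas and Sim \cite{price} (see the sentence immediately preceding it in Appendix C)---so your attempt must be judged against the argument in that reference, and there is a genuine gap in your first main step. The inequality
$$
\Pr\Bigl\{\sum_{l=1}^{N_i}\beta_{il}\varepsilon_{il}\ge p\Bigr\}\ \le\ \Pr\Bigl\{\sum_{l=1}^{N_i}\varepsilon_{il}\ge p\Bigr\}
$$
for arbitrary $|\beta_{il}|\le 1$ is false, and so is the per-coordinate monotonicity you use to derive it. Take $N_i=2$, $p=1$, $\beta=(1,0)$: the left side is $\Pr\{\varepsilon_1\ge 1\}=1/2$, while the right side is $\Pr\{\varepsilon_1+\varepsilon_2\ge 1\}=\Pr\{\varepsilon_1=\varepsilon_2=1\}=1/4$. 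In your own notation this is exactly the case you flagged: with $R=\sigma_1=+1\ge p$, increasing $\beta_2$ from $0$ to $1$ turns the count of admissible $\sigma_2$ from $2$ into $1$, and averaging over the remaining sign patterns does not repair it (the pattern $\sigma_1=-1$ contributes $0$ both before and after). No case analysis on the sign of $R-p$ resolves this, because the phenomenon is real: when $\nu=(N_i+p)/2$ is not an integer, the supremum of the weighted Rademacher tail over $\beta\in[0,1]^{N_i}$ strictly exceeds the unweighted tail, and the interpolation term $(1-\mu)\binom{N_i}{\lfloor\nu\rfloor}$ is present in $B(N_i,p)$ precisely to absorb that excess. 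Your chain of inequalities, if valid, would prove the smaller bound $2^{-N_i}\sum_{i=\lceil\nu\rceil}^{N_i}\binom{N_i}{i}$, which the example above violates ($1/2>1/4$, while $B(2,1)=1/2$).

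The remainder of your proposal is sound: the decomposition $\tilde z_i^l\stackrel{d}{=}A_{il}\varepsilon_{il}$ with $A_{il}=|\tilde z_i^l|$ independent of the sign is valid for symmetric variables supported in $[-1,1]$ (and correctly uses $|\gamma_{il}|\le 1$, which is what the lemma must mean), the identification of the unweighted Rademacher tail with $B(N_i,p)$ when $N_i+p$ is an even integer is correct, and part (b) follows immediately in that case. What is missing is the hard half of part (a): showing that $\Pr\{\sum_l\beta_l\varepsilon_l\ge p\}\le B(N_i,p)$ for every $\beta\in[0,1]^{N_i}$, not just for $\beta=e$. In \cite{price} this is done by an induction that isolates one fractional coefficient at a time and bounds the resulting tail by a convex combination of two integer-shifted binomial tails; the linear interpolation in $\nu$ built into the definition of $B(N_i,p)$ is exactly what allows that induction to close. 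You would need to reproduce an argument of that type rather than the all-ones reduction.
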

We can use the bound for our relation \eqref{m1} as follows. Assume that $\tilde{z}_i^l$, $l \in \{1,\ldots,{N_i}\}$, are independent and symmetrically distributed random variables in $[-1,1]$. Also denote by $\text{max}(\Delta b_i)$, the maximum entry of $\Delta b_i$. Using \eqref{m1}, We can write
\begin{eqnarray} \label{B2}
\text{Pr}\{\langle a_j,x \rangle > \tilde{b}_j\}
&=& \text{Pr} \left \{\sum_{l=1}^{N_i}{\Delta b_i^l \ \tilde{z}_i^l} > \delta_i \|\Delta b_i \|_1 \right \} \nonumber \\
&\leq& \text{Pr}\left \{ \sum_{l=1}^{N_i}{\frac{\Delta b_i^l}{\text{max}(\Delta b_i)} \ \tilde{z}_i^l} \geq \delta_i \frac{\|\Delta b_i \|_1}{\text{max}(\Delta b_i)} \right \} \nonumber \\
&\leq& B \left ({N_i},\delta_i \frac{\|\Delta b_i \|_1}{\text{max}(\Delta b_i)} \right ).
\end{eqnarray}
To compare these two bounds, assume that all the entries of $\Delta b_i$ are equal.
Bound \eqref{m1} reduces to $\exp(-\delta_i^2{N_i}/2)$, and bound \eqref{B2}
reduces to $B({N_i},\delta_i{N_i})$. We can prove that bound \eqref{B2} dominates bound
\eqref{m1}.  Moreover, bound \eqref{B2} is somehow the best possible bound
as it can be achieved by a special probability distribution as in Lemma \ref{price-3}.
The above probability bounds do not take part in our algorithm explicitly.
However, for each solution, we can present these bounds to the DM and s/he can use them to improve the feedback
to the algorithm. As an example of how these bounds may be used for the DM,
we show how to construct a concave utility function $U(s)$ based on these probability bounds.
Bounds \eqref{m1} and \eqref{B2} are functions of $\delta_i=\frac{w_i}{y_i \|\Delta b_i \|_1}=\frac{s_i}{\|\Delta b_i \|_1}$ and as a result, functions of $s$.  Now, assume that based on the probability bounds, the DM defines a function $u_i(s_i)$ for each slack variable $s_i$ as shown in Figure \ref{Fig-utility}. $u_i(s_i)$ increases as $s_i$ increases, and then at the point $\epsilon_i^1$ becomes flat. At $s_i=\epsilon_i^2$ it starts to decrease to reach zero. Parameters $\epsilon_i^1$ and $\epsilon_i^2$ are specified by the DM's desired bounds. Now, we can define the utility function as $U(s):= \prod _{j=1}^{m} u_i(s_i)$. This function is not concave, but maximization of it is equivalent to the maximization of $\ln(U(s))$ which is concave.
\begin{figure}[ht]
\centering
\includegraphics[height=2in, width = 3in]{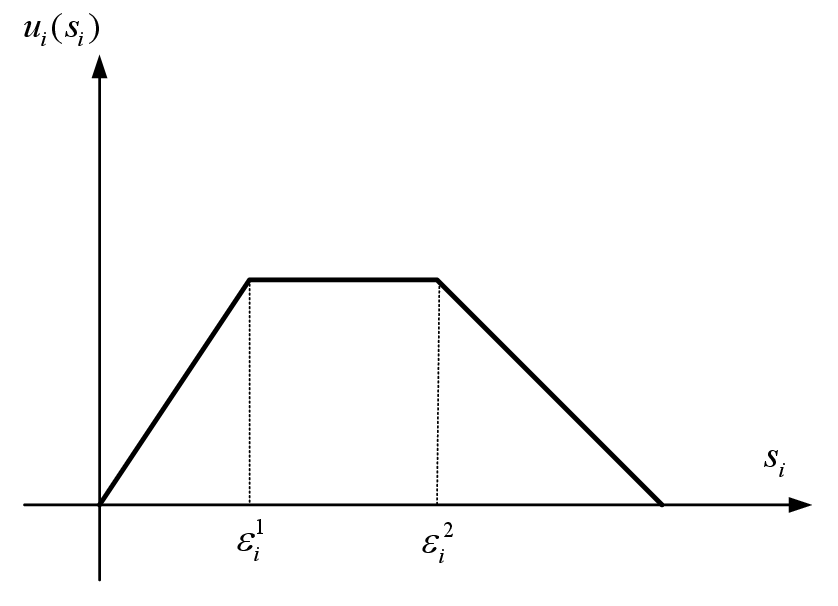}
\caption {The function $u_i(s_i)$ defined for the slack variable $s_i$}
\label{Fig-utility}
\end{figure}

\section{Properties of $w$-space} \label{appendix-1}
In this appendix, we study the properties of weight space as well as $W_s$ and $W_y$ regions. Let us start from the following well-known lemma:

\begin{lemma}\label{first-l}
Let $(x,y,s)$ and $(\hat{x},\hat{y},\hat{s})$ be the solutions of
system \eqref{analytic center} corresponding to the weight vectors
$w$, $\hat{w} \in \mathbb {R}_{++}^m$, respectively. For every $\bar{y}$
in the null space of $A^{\top}$ we have:
$$
\langle \hat{s},\bar{y}\rangle=\langle s,\bar{y}\rangle.
$$
\end{lemma}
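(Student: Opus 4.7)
The plan is to use the primal feasibility equation $Ax+s=b$ directly together with the definition of $\bar{y}$ being in $\mathcal{N}(A^{\top})$. Since both $(x,y,s)$ and $(\hat{x},\hat{y},\hat{s})$ satisfy system \eqref{analytic center}, I would first subtract the two primal feasibility equations to obtain $A(\hat{x}-x) + (\hat{s}-s) = 0$, which rearranges to $\hat{s}-s = -A(\hat{x}-x)$. This identity captures the fact that the displacement in the slack variables lies in the range of $-A$.

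Next I would take the inner product of both sides with any $\bar{y}\in\mathcal{N}(A^{\top})$. By the standard adjoint identity $\langle Az, \bar{y}\rangle = \langle z, A^{\top}\bar{y}\rangle$ and the assumption $A^{\top}\bar{y}=0$, the right-hand side vanishes:
\begin{equation*}
\langle \hat{s}-s, \bar{y}\rangle = -\langle A(\hat{x}-x), \bar{y}\rangle = -\langle \hat{x}-x, A^{\top}\bar{y}\rangle = 0.
\end{equation*}
Rearranging gives $\langle \hat{s},\bar{y}\rangle = \langle s,\bar{y}\rangle$, which is exactly the claim.

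There is no real obstacle here; the only ingredients needed are the first equation of \eqref{analytic center} (neither the dual equation $A^{\top}y=0$ nor the complementarity condition $Sy=w$ is used) and the definition of the adjoint. In fact, the result holds for any two primal feasible slack vectors, not just centric ones, which is a useful geometric observation: all feasible slacks differ by a vector in $\mathcal{R}(A)$, so their inner product with any element of $\mathcal{N}(A^{\top}) = \mathcal{R}(A)^{\perp}$ is an invariant of the feasible set.
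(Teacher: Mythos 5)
Your proof is correct and follows essentially the same route as the paper: both subtract the primal feasibility equations to conclude $\hat{s}-s\in\mathcal{R}(A)$ and then use the orthogonality $\mathcal{N}(A^{\top})=\mathcal{R}(A)^{\perp}$. Your closing observation that the lemma holds for arbitrary primal feasible slacks, not just centric ones, is accurate but does not change the argument.
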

\begin{proof}
From \eqref{analytic center}, we have $s=b-Ax$ and $\hat{s}=b-A\hat{x}$,
which results in $s-\hat{s}=A(x-\hat{x})$. Hence we have $s-\hat{s} \in \mathcal R(A)$. As the null space of $A^{\top}$ and the
range of $A$ are orthogonal, for every $\bar{y} \in \mathcal N(A^{\top})$ we can write:
$$
\langle s-\hat{s},\bar{y}\rangle = 0 \ \ \Rightarrow  \ \ \langle \hat{s},\bar{y}\rangle=\langle s,\bar{y}\rangle.
$$
\end{proof}
Let $(\hat{x},\hat{y},\hat{s})$ be the solution of system
\eqref{analytic center} corresponding to the weight vector
$\hat{w}$. Moreover, assume that $\bar{y}>0$ is such that
$A^{\top}\bar{y}=0$. Then, by using Lemma \ref{first-l}, we can show that $(\hat{x},\bar{y},\hat{s})$ is the solution
of system \eqref{analytic center} corresponding to the weight vector
$\bar Y (\hat Y)^{-1} \hat{w}$. Hence, there may be
many weight vectors that give the same $w$-center.
A stronger result is the following lemma which shows that in some cases, we can find the weighted center for a combination of weight vectors by using the combination of their weighted centers.

\begin{lemma} \label{W1}
Let $(x^{(i)},y^{(i)},s^{(i)})$, $i \in \{1,\ldots, \ell \}$, be solutions of
system \eqref{analytic center}, corresponding to the weights $w^{(i)}$. Then,
for every set of $\beta_i \in [0,1]$, $i \in \{1,\ldots, \ell\}$, such that
$\sum_{i=1}^\ell \beta_i =1$, and for every $j \in \{1,\ldots,\ell\}$, we have
$(\sum_{i=1}^\ell \beta_i x^{(i)}, y^{(j)}, \sum_{i=1}^\ell \beta_i s^{(i)})$ is the $w$-center of
$\mathcal F$, where
$$w:=\sum_{i=1}^\ell  \beta_i Y^{(j)}(Y^{(i)})^{-1} w^{(i)}.$$
Moreover,
$$
\sum_{i=1}^m w_i = \sum_{i=1}^m w_i^{(j)}.
$$
\end{lemma}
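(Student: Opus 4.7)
The plan is to verify directly that the triple $\bigl(\bar x, y^{(j)}, \bar s\bigr)$, where $\bar x := \sum_{i=1}^\ell \beta_i x^{(i)}$ and $\bar s := \sum_{i=1}^\ell \beta_i s^{(i)}$, satisfies the defining system \eqref{analytic center} for the weight vector $w$ displayed in the statement, and then appeal to the uniqueness part of Theorem \ref{w}. Primal feasibility is immediate by linearity: $A\bar x + \bar s = \sum_i \beta_i (Ax^{(i)} + s^{(i)}) = \sum_i \beta_i b = b$, while positivity $\bar s > 0$ follows from $s^{(i)} > 0$ and the convex-combination assumption on the $\beta_i$'s. Dual feasibility $A^{\top} y^{(j)} = 0$ is given.

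For the weight equation, I would compute $\bar S y^{(j)}$ coordinate-wise: using $s^{(i)}_k y^{(i)}_k = w^{(i)}_k$,
\begin{equation*}
(\bar S y^{(j)})_k \;=\; \Big(\sum_i \beta_i s^{(i)}_k\Big) y^{(j)}_k \;=\; \sum_i \beta_i \frac{y^{(j)}_k}{y^{(i)}_k} w^{(i)}_k,
\end{equation*}
which is precisely the $k$-th entry of $\sum_i \beta_i Y^{(j)} (Y^{(i)})^{-1} w^{(i)}$. Hence the three conditions of \eqref{analytic center} hold, and uniqueness of the $w$-center identifies this as the desired weighted center.

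For the final equality $\sum_k w_k = \sum_k w^{(j)}_k$, the key observation is that
\begin{equation*}
e^{\top}\!\bigl(Y^{(j)} (Y^{(i)})^{-1} w^{(i)}\bigr) \;=\; \sum_k y^{(j)}_k s^{(i)}_k \;=\; \langle y^{(j)}, s^{(i)} \rangle,
\end{equation*}
and this inner product is invariant in $i$: Lemma \ref{first-l} applied with the fixed null-space vector $y^{(j)}$ gives $\langle y^{(j)}, s^{(i)} \rangle = \langle y^{(j)}, s^{(j)} \rangle$ for every $i$. Summing against the convex weights $\beta_i$ (which add to $1$) then yields $e^{\top} w = \langle y^{(j)}, s^{(j)} \rangle = e^{\top} w^{(j)}$.

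I do not anticipate a genuine obstacle here; the statement is essentially a verification whose only subtle ingredient is that $y^{(j)}$ lies in $\mathcal{N}(A^{\top})$, which is exactly what Lemma \ref{first-l} exploits to make the inner products $\langle y^{(j)}, s^{(i)}\rangle$ independent of $i$. The mildly nontrivial conceptual point is realizing that one should not expect $y^{(j)}$ to equal a convex combination of the $y^{(i)}$'s; instead the asymmetric-looking rescaling $Y^{(j)}(Y^{(i)})^{-1}$ in the definition of $w$ is exactly what compensates for fixing the dual slack at $y^{(j)}$ rather than combining the duals.
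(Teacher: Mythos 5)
Your proof is correct and follows essentially the same route as the paper's: direct verification that the proposed triple satisfies system \eqref{analytic center} with the stated weight vector, followed by the uniqueness of the $w$-center, and an appeal to Lemma \ref{first-l} (via the invariance of $\langle y^{(j)}, s^{(i)}\rangle$ in $i$) for the equality of the weight sums. No gaps.
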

\begin{proof}
According to the assumptions, for every $i \in \{1,\ldots,\ell\}$, we have
\begin{eqnarray}
\nonumber && Ax^{(i)}+s^{(i)}=b^{(0)}, \ \ s>0,\\
\nonumber && A^{\top}y^{(i)}=0,\\
\nonumber && S^{(i)}y^{(i)}=w^{(i)}.
\end{eqnarray}
 Now, it can be seen that $(\sum_{i=1}^\ell \beta_i x^{(i)}, y^{(j)}, \sum_{i=1}^\ell \beta_i s^{(i)})$
satisfies the system:
\begin{eqnarray} \label{L1}
\nonumber && A(\sum_{i=1}^\ell \beta_i x^{(i)})+(\sum_{i=1}^\ell \beta_i s^{(i)})=b^{(0)}, \ \ (\sum_{i=1}^\ell \beta_i s^{(i)})>0,\\
\nonumber && A^{\top}y^{(j)}=0,\\
 && (\sum_{i=1}^\ell \beta_i S^{(i)}) y^{(j)} = \sum_{i=1}^\ell  \beta_i Y^{(j)}(Y^{(i)})^{-1} w^{(i)}.
\end{eqnarray}
Since the $w$-center of $\mathcal F$ is unique, the proof for the first part is done. \\
For the second part, from (\ref{L1}) we can write
\begin{eqnarray*}
\sum_{i=1}^m w_i = \sum_{i=1}^m (\sum_{p=1}^\ell \beta_p s_i^{(p)}) y_i^{(j)}=  \sum_{p=1}^\ell \beta_p ( \sum_{i=1}^m s_i^{(p)}y_i^{(j)}) =
\sum_{p=1}^\ell \beta_p \langle s^{(p)},  y^{(j)} \rangle.
\end{eqnarray*}
By Lemma \ref{first-l}, we have $\langle s^{(p)} , y^{(j)} \rangle = \langle s^{(i)}  ,y^{(j)} \rangle$. Therefore, we can continue the above series of equations as follows:
\begin{eqnarray*}
\sum_{i=1}^m w_i= \sum_{p=1}^\ell \beta_p \langle s^{(j)} , y^{(j)} \rangle=  \sum_{p=1}^\ell \beta_p ( \sum_{i=1}^m s_i^{(j)} y_i^{(j)}) = ( \sum_{i=1}^m w_i^{(j)}) \sum_{p=1}^\ell \beta_p= \sum_{i=1}^m w_i^{(j)}.
\end{eqnarray*}
\end{proof}

\subsection{Properties of $w$-space} \label{weight-2}

In this subsection, we study the structure of the $w$-space, which is important for the design of the algorithms in Section \ref{alg}. Let $s$ and $y$ be centric. First, we note that the simplex of the weight vectors can be divided into regions of constant $y$-vector  ($W_y$) and constant $s$-vector ($W_s$).
By using Lemma \ref{W1}, if $(\hat{x},\hat{y},\hat{s})$ is the solution of system \eqref{analytic center} corresponding to the weight vector $\hat{w} \in W$, and $\bar{y}>0$ is any centric $y$-vector, then $(\hat{x},\bar{y},\hat{s})$ is the solution of system \eqref{analytic center} corresponding to the weight vector $\bar Y (\hat Y)^{-1} \hat{w}$. This means that for every centric vector $\hat{s}$ and any centric vector $y$, $\hat{S}y$ is a weight vector in the simplex.

For every pair of centric vectors $s$ and $y$, $W_s$ and $W_y$ are convex. To see this, let  $(x,\bar{y},s)$ and $(x,y,s)$ be the weighted centers of $\hat{w}$ and $w$. Then, it is easy to see that for every $\beta \in [0,1]$, $(x,\beta\bar{y}+(1-\beta) y,s)$ is the weighted center of $\beta \hat{w}+(1-\beta) w$. With a similar reasoning, $W_y$ is convex for every centric $y$.

Using \eqref{analytic center}, we can express $W_s$ and $W_y$ as follows:
\begin{eqnarray} \label{W_y}
W_y = Y[(\mathcal R(A)+b) \cap \mathbb R_{++}^m] \cap B_1(0,1),
\end{eqnarray}
\begin{eqnarray} \label{W_s}
W_s = S[\mathcal N(A^{\top}) \cap \mathbb R_{++}^m] \cap B_1(0,1),
\end{eqnarray}
where $B_1(0,1)$ is the unit ball in $1$-norm centered at zero vector. Here,
we want to find another formulation for $W_y$ that might work better in some cases.
We use the following lemma.
\begin{lemma}\label{W-l4}
Assume that the rows of $B_y \in \mathbb R^{(m-n) \times m}$ make a basis for the null space of $A^{\top}Y$. Then there exists $x \in \mathbb R^{n}$ such that $YAx+w=Yb$ if and only if $B_yw=B_yYb$. I.e., $(Yb-w) \in \mathcal R(YA)$ iff $(Yb-w) \in \mathcal N(B_y)$.
\end{lemma}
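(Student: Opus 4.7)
The statement is a standard linear algebra equivalence: $\mathcal{R}(YA)$ and $\mathcal{N}(A^\top Y) = \mathcal{N}((YA)^\top)$ are orthogonal complements of each other in $\mathbb{R}^m$, and the hypothesis identifies $\mathcal{N}(B_y) = (\text{row space of } B_y)^\perp$ with the second complement. The plan is to prove the two subspaces $\mathcal{R}(YA)$ and $\mathcal{N}(B_y)$ coincide, which immediately gives the biconditional since ``there exists $x$ with $YAx + w = Yb$'' is just the assertion that $Yb - w \in \mathcal{R}(YA)$, while ``$B_y w = B_y Yb$'' is the assertion that $Yb - w \in \mathcal{N}(B_y)$.

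First I would note that $Y = \Diag(y)$ with $y > 0$ is invertible, so $YA$ has the same column rank as $A$, which by the standing assumption is $n$; hence $\dim \mathcal{R}(YA) = n$. Meanwhile the rows of $B_y$ form a basis for $\mathcal{N}(A^\top Y)$, so $B_y$ has full row rank $m-n$, giving $\dim \mathcal{N}(B_y) = m - (m-n) = n$. Thus the two subspaces have matching dimension, and to conclude equality it suffices to show one containment.

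Next I would verify $\mathcal{R}(YA) \subseteq \mathcal{N}(B_y)$ by a direct computation. Every row of $B_y$ lies in $\mathcal{N}(A^\top Y)$, which means $A^\top Y B_y^\top = 0$, or equivalently, taking transposes, $B_y Y A = 0$. Consequently, for any $x \in \mathbb{R}^n$, $B_y (YA x) = 0$, so $YAx \in \mathcal{N}(B_y)$. Combined with the dimension count above, this forces $\mathcal{R}(YA) = \mathcal{N}(B_y)$.

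The final step is to translate the set equality back into the two formulations in the lemma. If $YAx + w = Yb$ for some $x$, then $Yb - w = YAx \in \mathcal{R}(YA) = \mathcal{N}(B_y)$, so $B_y(Yb - w) = 0$, i.e., $B_y w = B_y Yb$. Conversely, if $B_y w = B_y Yb$, then $Yb - w \in \mathcal{N}(B_y) = \mathcal{R}(YA)$, so there exists $x \in \mathbb{R}^n$ with $YAx = Yb - w$, which rearranges to $YAx + w = Yb$. I do not anticipate a genuine obstacle here; the only thing to be careful about is invoking the standing assumption $\rank(A) = n$ (stated in Subsection~\ref{notions}) to get the dimension of $\mathcal{R}(YA)$ right, and verifying that ``rows of $B_y$ form a basis'' indeed delivers both full row rank and $\mathcal{N}(B_y) = (\text{row space})^\perp$.
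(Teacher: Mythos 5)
Your proof is correct and follows essentially the same route as the paper's: the forward direction is the identical computation $B_y YA = 0$, and both arguments reduce the converse to the subspace identity $\mathcal{N}(B_y) = \mathcal{R}(YA)$. The only (cosmetic) difference is how that identity is closed out --- the paper uses the orthogonal-complement chain $\mathcal{N}(B_y)=\mathcal{R}(B_y^{\top})^{\bot}=\mathcal{N}(A^{\top}Y)^{\bot}=\mathcal{R}(YA)$, whereas you combine the one containment with a dimension count via $\textup{rank}(A)=n$; both are standard and interchangeable here.
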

\begin{proof}
Assume that there exists $x$ such that $YAx+w=Yb$. By multiplying both sides with $B_y$ from the left and using the fact that
$B_yYA=0$ we have the result. For the other direction,
assume that $B_yw=B_yYb$. Then $B_y(w-Yb)=0$ which means $w-Yb$
is in the null space of $B_y$. Then, using the orthogonal decomposition theorem,
we have $\mathcal N(B_y)=\mathcal R(B_y^{\top})^{\bot}=\mathcal N(A^{\top}Y)^{\bot}=\mathcal R(YA)$.
Thus, there exists $x$ such that $YAx+w=Yb$.
\end{proof}
Assume that $B \in \mathbb R^{(m-n) \times m}$ is such that
its rows make a basis for the null space of $A^{\top}$.
For every vector $y$, we have $A^{\top}y=A^{\top}Y(Y^{-1}y)$,
so if $y$ is in the null space of $A^{\top}$, $Y^{-1}y$ is in
the null space of $A^{\top}Y$. Hence, if the rows of $B$ make
a basis for the null space of $A^{\top}$, the rows of $BY^{-1}$
make a basis for the null space of $A^{\top}Y$ and we can write
$B_y=BY^{-1}$.  Using Lemma \ref{W-l4}, there exists $x$ such that $YAx+w=Yb$ if and only if $BY^{-1}w=BY^{-1}Yb=Bb$, and we can write \eqref{W_y} as:
\begin{eqnarray} \label{W_y2}
W_y = \left \{  w>0 \ : \ BY^{-1}w=Bb, \ e^{\top}w=1 \right \}.
\end{eqnarray}
Let us denote the affine hull with $\aff(.)$. We can prove the following lemma about $W_s$ and $W_y$.
\begin{lemma} \label{W-l1}
Assume that $s$ and $y$ are centric, we have
\begin{eqnarray*}
W_s=\textup{aff}(W_s) \cap W \ \ \textup{and}  \ \
W_y=\textup{aff}(W_y) \cap W.
\end{eqnarray*}
\end{lemma}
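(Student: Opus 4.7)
The plan is to exhibit, for each of $W_s$ and $W_y$, an explicit affine subspace $L$ such that the region equals $L \cap \mathbb{R}^m_{++}$. Once this is done, the region is a nonempty relatively open subset of $L$ (nonemptiness coming from centricity of $s$ or $y$), and a standard fact yields $\aff(\,\cdot\,) = L$. Intersecting with $W$, and using that the condition $e^\top w = 1$ is already built into $L$, recovers the region itself, proving the lemma.

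For $W_s$, I would set
\[
L_s := \bigl\{ w \in \mathbb{R}^m : S^{-1}w \in \mathcal{N}(A^\top),\ e^\top w = 1 \bigr\},
\]
which is affine. The inclusion $W_s \subseteq L_s$ is immediate from \eqref{analytic center}: if $w \in W_s$ with associated $y$, then $y = S^{-1}w \in \mathcal{N}(A^\top)$ and $e^\top w = s^\top y = 1$. For the reverse, given $w \in L_s \cap \mathbb{R}^m_{++}$, define $y := S^{-1}w$. Since $s > 0$ and $w > 0$, we have $y > 0$, and by construction $A^\top y = 0$ and $Sy = w$. Centricity of $s$ furnishes an $x$ with $Ax + s = b$, so $(x, y, s)$ satisfies \eqref{analytic center} for the weight $w$; by Theorem \ref{w}, this is the unique weighted center associated with $w$, whence $w \in W_s$. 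Thus $W_s = L_s \cap \mathbb{R}^m_{++}$, and because $s$ is centric this set is nonempty and relatively open in $L_s$, giving $\aff(W_s) = L_s$. Then
\[
\aff(W_s) \cap W = L_s \cap \{w > 0 : e^\top w = 1\} = L_s \cap \mathbb{R}^m_{++} = W_s.
\]

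For $W_y$, the description \eqref{W_y2} already writes $W_y$ as an intersection of the affine set $L_y := \{w : BY^{-1}w = Bb,\ e^\top w = 1\}$ with $\mathbb{R}^m_{++}$, so no separate identification is needed. Centricity of $y$ gives $W_y \neq \emptyset$, hence $W_y$ is a nonempty relatively open subset of $L_y$, forcing $\aff(W_y) = L_y$, and the same final computation as above yields $\aff(W_y) \cap W = W_y$.

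The only real obstacle is the $W_s$ identification, specifically verifying that every positive $w \in L_s$ actually arises as a weight vector whose weighted center has slack $s$; this reduces to the observation that $s > 0$ makes $S^{-1}$ preserve positivity, together with the uniqueness clause of Theorem \ref{w}. Everything else is a routine application of the standard fact that a nonempty relatively open subset of an affine subspace has that subspace as its affine hull.
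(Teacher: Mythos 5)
Your proof is correct, but it takes a genuinely different route from the paper's. The paper argues by contradiction with a line-segment/supremum argument: it picks $w\in\aff(W_s)\cap W$ with $w\notin W_s$, joins it to a point $\hat w\in\relint(W_s)$, lets $\hat\beta$ be the supremum of the parameters staying in $W_s$, and then uses positivity of $y(\hat\beta)$ to push slightly past $\hat\beta$ while remaining in $W_s$, contradicting the choice of $\hat\beta$. You instead identify each region exactly as an affine set sliced by the open positive orthant --- $W_s=L_s\cap\mathbb{R}^m_{++}$ with $L_s=\{w: S^{-1}w\in\mathcal N(A^{\top}),\ e^{\top}w=1\}$, and $W_y$ via \eqref{W_y2} --- and then invoke the standard fact that a nonempty relatively open subset of an affine subspace affinely spans it. Your identification of $W_s$ is sound: the only nontrivial direction uses that $S^{-1}$ preserves positivity and that the solution of \eqref{analytic center} for a given $w$ is unique, both of which are available. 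Your route avoids the somewhat delicate limiting step in the paper (justifying that $w(\hat\beta)\in W_s$ at the supremum) and, as a bonus, produces explicit formulas for $\aff(W_s)$ and $\aff(W_y)$ that the paper only obtains implicitly and which feed naturally into the later dimension counts of Proposition \ref{aff-3}; the paper's argument, on the other hand, is more generic in that it would apply to any convex region known only to be a relatively open slice of $W$, without requiring an explicit affine description in hand.
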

\begin{proof}
We prove the first one and our proof for the second one is the same. Clearly we have $W_s \subseteq \textup{aff}(W_s) \cap W$. To prove the other side, assume by contradiction that there exist $w \in \textup{aff}(W_s) \cap W$ such that $w \notin W_s$. Pick an arbitrary $\hat{w} \in \relint(W_s)$ and consider all the points $w({\beta})=\beta {w} + (1-\beta) \hat {w}$ for $\beta \in [0,1]$. Both $w$ and $\hat{w}$ are in $\textup{aff}(W_s)$, so all the points $w({\beta})$ are also in $\textup{aff}(W_s)$. $w(0) \in W_s$ and $w(1) \notin W_s$, so let $\hat{\beta}$ be $\sup \{\beta : w(\beta) \in W_s \}$.

Note that all the points in $W_s$ has the same $s$-vector, so we have $w(\beta)=Sy(\beta)$ for $\beta \in [0,\hat{\beta})$. By using \eqref{analytic center} we must also have $w(\hat{\beta}) \in W_s$. We want to prove that $\hat{\beta}=1$. Assume that $\hat{\beta}<1$. All the points on the line segment between $w(0)$ and $w(\hat{\beta})$ have the same $s$-vector and we can write them as $S(\gamma y(0) + (1-\gamma) y(\hat{\beta}))$ for $\gamma \in [0,1]$. But note that $y(\hat{\beta})>0$, so there is a small enough $\epsilon > 0$ such that $y_{\epsilon}=(-\epsilon y(0) + (1+\epsilon) y(\hat{\beta}))>0$ and hence $Sy_{\epsilon}$ is a weight vector in $W_s$. However, it is also a vector on the line segment between $w(\hat{\beta})$ and $w$ which is a contradiction to $\hat{\beta}=\sup \{\beta : w(\beta) \in W_s \}$.  So $\hat{\beta}=1$ and $w=w(1) \in W_s$ which is a contradiction. Hence $W_s \supseteq \textup{aff}(W_s) \cap W$ and we are done.
\end{proof}

We conclude that $W$ is sliced in two ways by $W_y$'s and $W_s$'s for centric $s$ and $y$ vectors. For each centric $s$ and each centric $y$, $W_y$ and $W_s$ intersect at a single point $Sy$ on the simplex. We want to prove that the smallest affine subspace containing $W_s$ and $W_y$ is \mbox{$\aff(W)=\{w : e^{\top}w=1\}$}. To that end, we prove some results on the intersection of affine subspaces. We start with the following definition:
\begin{definition} \label{lineality-d}
The \emph{recession cone} of a convex set $C \in \mathbb R^n$ is denoted by $\textup{rec}(C)$ and defined as:
$$
\textup{rec}(C):= \{y \in \mathbb R^n \ : \ (x+y) \in C, \ \ \forall x \in C\}.
$$
The \emph{lineality space} of a convex set $C$ is denoted by $\lin(C)$ and defined as:
$$
\lin(C):= (\textup{rec}(C)) \cap (-\textup{rec}(C)).
$$
\end{definition}

Let $U$ be an affine subspace of $\mathbb R^m$. If $y \in \textup{rec}(U)$,
then $-y \in \textup{rec}(U)$, which means $(\textup{rec}(U))=(-\textup{rec}(U))$.
Therefore, by Definition \ref{lineality-d}, we have
$\lin(U)=\textup{rec}(U)$. Then, by using the definition of the affine space we have:
\begin{eqnarray} \label{lineality}
\lin(U) := \{u_1-u_2 : \forall u_1,u_2 \in U \}.
\end{eqnarray}
In other words, $\textup{lin}(U)$ is a linear subspace such that $U=u+\textup{lin}(U)$ for all $u \in U$
where $'+'$ is the Minkowski sum. The following two lemmas are standard, see, for instance, \cite{Gallier}.
\begin{lemma} \label{aff-1}
Given a pair of nonempty affine subspaces $U$ and $V$ in $\mathbb R^n$, the following facts hold: \newline
(1) $U \cap V \neq \emptyset$  iff for every $u \in U$ and $v \in V$, we have $(v-u) \in \textup{lin}(U)+\textup{lin}(V)$. \newline
(2) $U \cap V$ consists of a single point iff for every $u \in U$ and $v \in V$, we have $$(v-u) \in \textup{lin}(U)+\textup{lin}(V) \ \ \textup{and} \ \ \textup{lin}(U) \cap \textup{lin}(V)=\{0\}.$$
(3) For every $u \in U$ and $v \in V$, we have
$$
\textup{lin}(\aff(U \cup V))=\textup{lin}(U)+\textup{lin}(V)+ \{ \alpha (v-u): \ \alpha \in \mathbb R \}.
$$
\end{lemma}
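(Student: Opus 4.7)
The plan is to prove the three parts in sequence, using the fundamental fact that for an affine subspace $U$ and any $u_0 \in U$, we have $U = u_0 + \textup{lin}(U)$, and $\textup{lin}(U) = \{u_1 - u_2 : u_1, u_2 \in U\}$ (by \eqref{lineality}). This characterization is the only machinery required.

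For part (1), I would first handle the forward direction: picking any $w \in U \cap V$, for arbitrary $u \in U$ and $v \in V$ I write $v - u = (v - w) + (w - u)$, noting that $w - u \in \textup{lin}(U)$ and $v - w \in \textup{lin}(V)$ by the translate characterization. For the reverse direction, it suffices to exhibit a single common point: given that $v - u = p + q$ with $p \in \textup{lin}(U)$ and $q \in \textup{lin}(V)$, rearrange to $u + p = v - q$; then $u + p \in U$ (since $U = u + \textup{lin}(U)$) and $v - q \in V$ (since $V = v + \textup{lin}(V)$ and $-q \in \textup{lin}(V)$), so this common value lies in $U \cap V$.

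For part (2), I would start from part (1) to guarantee $U \cap V \neq \emptyset$, then observe the following two-way correspondence. If $w_1, w_2 \in U \cap V$, then $w_1 - w_2 \in \textup{lin}(U) \cap \textup{lin}(V)$; conversely, given any $w \in U \cap V$ and any $z \in \textup{lin}(U) \cap \textup{lin}(V)$, the point $w + z$ is again in $U$ and in $V$. So the intersection is a singleton precisely when $\textup{lin}(U) \cap \textup{lin}(V) = \{0\}$.

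For part (3), I would prove two inclusions. Fix $u \in U$, $v \in V$ and let $L := \textup{lin}(U) + \textup{lin}(V) + \{\alpha(v-u) : \alpha \in \mathbb{R}\}$. For the inclusion $L \subseteq \textup{lin}(\aff(U \cup V))$, note that each summand is a space of differences of points in $U \cup V$: elements of $\textup{lin}(U)$ are differences within $U \subseteq \aff(U \cup V)$, elements of $\textup{lin}(V)$ are differences within $V$, and $v - u$ itself is a difference of two points in $U \cup V$, so $L$ is contained in the linear subspace $\textup{lin}(\aff(U \cup V))$. For the reverse inclusion, I would show that $T := u + L$ is itself an affine subspace containing both $U$ and $V$, which forces $\aff(U \cup V) \subseteq T$ and hence $\textup{lin}(\aff(U \cup V)) \subseteq L$. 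The containment $U \subseteq T$ is immediate (take $\alpha = 0$ and use $U = u + \textup{lin}(U)$); for $V \subseteq T$, any $v' \in V$ satisfies $v' - u = (v' - v) + (v - u) \in \textup{lin}(V) + \textup{span}(v-u) \subseteq L$. The main obstacle, such as it is, lies in this last verification, but it is just a careful bookkeeping of the translate characterization of affine subspaces rather than any deep argument.
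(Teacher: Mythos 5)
Your proof is correct and complete. All three parts check out: in (1) the decomposition $v-u=(v-w)+(w-u)$ for the forward direction and the rearrangement $u+p=v-q$ for the converse are exactly right; in (2) the bijective correspondence between $U\cap V$ and the coset $w+(\textup{lin}(U)\cap\textup{lin}(V))$ settles both directions; and in (3) both inclusions are verified, with the key observation that $T:=u+L$ is an affine subspace containing $U\cup V$, so that $\aff(U\cup V)\subseteq T$ and hence $\textup{lin}(\aff(U\cup V))\subseteq L$. There is no proof in the paper to compare against: the authors state this lemma (together with Lemma \ref{aff-2}) as standard and simply cite Gallier's book for it. Your argument is the natural elementary one, relying only on the translate characterization $U=u_0+\textup{lin}(U)$ and the difference characterization \eqref{lineality}, and it would serve as a self-contained replacement for the citation.
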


\begin{lemma} \label{aff-2}
Let $U$ and $V$ be nonempty affine subspaces in $\mathbb R^n$. Then we
have the following properties:

(1) if $U \cap V = \emptyset$, then
$$
\dim(\aff(U \cup V))=\dim(U)+\dim(V)+1-\dim(\textup{lin}(U) \cap \textup{lin}(V)),
$$

(2) if $U \cap V \neq \emptyset$, then
$$
\dim(\aff(U \cup V))=\dim(U)+\dim(V)-\dim(U \cap V).
$$
\end{lemma}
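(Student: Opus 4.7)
The plan is to deduce Lemma \ref{aff-2} directly from Lemma \ref{aff-1}, using two standard facts from linear algebra: (a) for any nonempty affine subspace $W$ in $\mathbb R^n$, $\dim(W)=\dim(\textup{lin}(W))$, and (b) the Grassmann dimension formula $\dim(L_1+L_2)=\dim(L_1)+\dim(L_2)-\dim(L_1\cap L_2)$ for linear subspaces $L_1,L_2$. Throughout, fix arbitrary $u\in U$ and $v\in V$, and set $L_U:=\textup{lin}(U)$, $L_V:=\textup{lin}(V)$. By Lemma \ref{aff-1}(3),
\begin{equation*}
\textup{lin}(\aff(U\cup V)) \;=\; L_U+L_V+\mathbb R(v-u).
\end{equation*}

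For part (1), I would first invoke Lemma \ref{aff-1}(1): since $U\cap V=\emptyset$, the vector $v-u$ does not belong to $L_U+L_V$. Therefore adding the one-dimensional subspace $\mathbb R(v-u)$ to $L_U+L_V$ strictly increases the dimension by exactly $1$, so
\begin{equation*}
\dim(\textup{lin}(\aff(U\cup V))) \;=\; \dim(L_U+L_V)+1.
\end{equation*}
Applying the Grassmann formula to $L_U+L_V$ and using $\dim(U)=\dim(L_U)$ and $\dim(V)=\dim(L_V)$ yields the claimed equality.

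For part (2), Lemma \ref{aff-1}(1) now gives $v-u\in L_U+L_V$, so the extra summand $\mathbb R(v-u)$ is redundant and $\textup{lin}(\aff(U\cup V))=L_U+L_V$. The Grassmann formula then gives $\dim(\aff(U\cup V))=\dim(L_U)+\dim(L_V)-\dim(L_U\cap L_V)$. To rewrite $\dim(L_U\cap L_V)$ as $\dim(U\cap V)$, I would pick any $w\in U\cap V$ and observe that a vector $z$ lies in $\textup{lin}(U\cap V)$ iff $w+z\in U\cap V$, which (using $w\in U$ and $w\in V$) holds iff $z\in L_U$ and $z\in L_V$; hence $\textup{lin}(U\cap V)=L_U\cap L_V$ and consequently $\dim(U\cap V)=\dim(L_U\cap L_V)$. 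Substituting gives (2).

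The argument is essentially bookkeeping, and the only non-routine step is the identification $\textup{lin}(U\cap V)=L_U\cap L_V$ in part (2); this is where one must use the nonemptiness of $U\cap V$ to anchor the lineality computation at a common point. Everything else follows formally from Lemma \ref{aff-1} and the two linear-algebra facts recalled at the outset.
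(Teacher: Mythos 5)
Your proof is correct. Note that the paper itself does not supply a proof of this lemma: it declares both Lemma \ref{aff-1} and Lemma \ref{aff-2} to be standard and refers the reader to \cite{Gallier}, so there is no in-paper argument to compare against. Your derivation --- reading off $\lin(\aff(U\cup V))$ from Lemma \ref{aff-1}(3), using Lemma \ref{aff-1}(1) to decide whether the extra line $\{\alpha(v-u):\alpha\in\mathbb R\}$ contributes a dimension, and then applying the Grassmann formula together with the identity $\dim(W)=\dim(\lin(W))$ for nonempty affine subspaces --- is a clean, self-contained route and is consistent with the standard treatment. Two small points are worth making explicit. First, in part (1), Lemma \ref{aff-1}(1) as stated only yields the existence of \emph{some} pair with $v-u\notin\lin(U)+\lin(V)$; to conclude this for your fixed pair you should observe that membership of $v-u$ in $\lin(U)+\lin(V)$ does not depend on the choice of $u\in U$ and $v\in V$, since any two such differences differ by an element of $\lin(U)+\lin(V)$. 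You should also note that $v-u\neq 0$ (otherwise $u=v\in U\cap V$), so that adjoining $\mathbb R(v-u)$ genuinely raises the dimension by exactly one. Second, your identification $\lin(U\cap V)=\lin(U)\cap\lin(V)$ in part (2), anchored at a common point $w\in U\cap V$, is exactly the step needed to convert $\dim(\lin(U)\cap\lin(V))$ into $\dim(U\cap V)$, and you correctly flag that this is where nonemptiness of $U\cap V$ is used. With those one-line remarks added, the argument is complete.
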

Using the above lemmas, we deduce the following proposition.
\begin{proposition} \label{aff-3}
Assume that $s$ and $y$ are centric $s$-vector and $y$-vector, respectively. Then the smallest affine subspace containing $W_s$ and $W_y$ is $\aff(W)=\{w : e^{\top}w=1\}$.
\end{proposition}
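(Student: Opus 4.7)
My plan is to apply Lemma \ref{aff-2}(2) to the affine subspaces $U:=\aff(W_s)$ and $V:=\aff(W_y)$. Since $\aff(W_s\cup W_y)=\aff(U\cup V)$ and $W_s\cup W_y\subseteq W$, it will suffice to show $\dim U = m-n-1$, $\dim V = n$, and $U\cap V = \{Sy\}$; the lemma then gives $\dim\aff(W_s\cup W_y) = (m-n-1)+n-0 = m-1 = \dim\aff(W)$, and containment in $\aff(W)$ forces the two to coincide.

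For the dimensions, I would parameterize $W_s = \{Sy' : y'>0,\ A^{\top}y' = 0,\ b^{\top}y' = 1\}$ (using Lemma \ref{first-l} to see that $e^{\top}Sy' = \langle s,y'\rangle = b^{\top}y' $ on centric $y'$) and $W_y = \{Y(b-Ax) : x\in\mathbb R^n,\ b-Ax>0\}$. Note that $b\notin\mathcal R(A)$: otherwise $b=Ax_0$ would force $b^{\top}y'=x_0^{\top}A^{\top}y'=0$ for every $y'\in\mathcal N(A^{\top})$, contradicting the existence of a centric $y$ (which normalizes to $b^{\top}y=1$). Hence $A^{\top}y'=0$ imposes $n$ linearly independent constraints on $y'\in\mathbb R^m$, and $b^{\top}y'=1$ adds one more independent constraint, yielding $\dim U = m-n-1$. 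Likewise, the affine map $x\mapsto Y(b-Ax)$ is injective because $YA$ has full column rank, yielding $\dim V = n$.

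The crux is the single-point intersection $U\cap V = \{Sy\}$. The inclusion $Sy\in U\cap V$ is immediate (take $y'=y$ for $U$, and take any $x$ with $b-Ax=s$ for $V$). Conversely, let $w\in U\cap V$ and write $w = Sy'$ with $A^{\top}y' = 0$, and $w = Ys'$ with $s' = b - Ax'$ for some $x'$. Put $\delta := y'-y$; then $A^{\top}\delta = 0$. Using $Sy = Ys$, the identity $Sy' = Ys'$ becomes $S\delta = Y(s'-s)$, and since $s'-s = A(x-x') \in\mathcal R(A)$, we obtain $SY^{-1}\delta = A\xi$ for some $\xi\in\mathbb R^n$. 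Taking the inner product with $\delta$:
\[
\delta^{\top}SY^{-1}\delta \;=\; \delta^{\top}A\xi \;=\; (A^{\top}\delta)^{\top}\xi \;=\; 0.
\]
But $\delta^{\top}SY^{-1}\delta = \sum_{i=1}^{m}(s_i/y_i)\,\delta_i^{2}$ with all coefficients $s_i/y_i > 0$ (because $s,y > 0$ are centric), which forces $\delta = 0$. Hence $y' = y$ and $w = Sy$.

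The main obstacle is this last step: one must combine the $\aff(W_s)$-type condition $A^{\top}y'=0$ with the $\aff(W_y)$-type condition $s'-s\in\mathcal R(A)$ in precisely the right way so as to distill a positive-definite quadratic form in $\delta$ that is simultaneously forced to vanish. The essential ingredient making this work is the strict positivity of the centric vectors $s$ and $y$; without it, the quadratic form $\sum(s_i/y_i)\delta_i^{2}$ would not be positive definite, and the intersection could have positive dimension.
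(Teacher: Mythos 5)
Your proof is correct, and its skeleton is the same as the paper's: compute $\dim\aff(W_s)=m-n-1$ and $\dim\aff(W_y)=n$ exactly as you do (including the observation that $b\notin\mathcal R(A)$ makes $b^{\top}y'=1$ an independent constraint), then invoke Lemma \ref{aff-2}(2) and compare with $\dim\aff(W)=m-1$. Where you genuinely diverge is the crux step, the single-point intersection. The paper handles it by citing the earlier fact that $W_s\cap W_y=\{Sy\}$ --- which follows immediately from uniqueness of the weighted center, since a weight vector lying in both sets has $s$-vector $s$ and $y$-vector $y$ --- and then leans (implicitly, via Lemma \ref{W-l1}) on the identification $W_s=\aff(W_s)\cap W$ to pass from the intersection of the sets to the intersection of their affine hulls, which is what Lemma \ref{aff-2}(2) actually requires. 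You instead prove $\aff(W_s)\cap\aff(W_y)=\{Sy\}$ directly: writing a common point as $Sy'=Ys'$ with $A^{\top}(y'-y)=0$ and $s'-s\in\mathcal R(A)$, you extract the identity $\delta^{\top}SY^{-1}\delta=0$ and use strict positivity of the centric vectors to force $\delta=0$. This buys you a self-contained argument that works at the level of affine hulls from the start, with no appeal to uniqueness of weighted centers or to Lemma \ref{W-l1}, and it makes explicit where positivity of $s$ and $y$ enters; the paper's route is shorter but leaves the passage from $W_s\cap W_y$ to $\aff(W_s)\cap\aff(W_y)$ to the reader. Both are valid.
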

\begin{proof}
We assumed that $A \in \mathbb R^{m \times n}$  has full column rank, i.e., $\textup{rank}(A)=n\leq m$ and the interior of $\{x: Ax \leq b\}$ is not empty. Let $B_s$ denote the set of all centric $s$-vectors, i.e., the set of $s$-vectors for which there exist $(x,y,s)$ satisfies all the equations in \eqref{analytic center}. We claim that $B_s=\{s>0:s=b-Ax\}$. For every $s\in \{s>0:s=b-Ax\}$, pick an arbitrary $y>0$ such that $A^{\top}y=0$. For every scalar $\alpha$ we have $A^{\top}(\alpha y)=0$, so we can choose $\alpha$ such that $\alpha y^{\top}s=1$. Hence $(x,\alpha y, s)$ satisfies \eqref{analytic center} and we conclude that $B_s=\{s>0:s=b-Ax\}$. The range of $A$ has dimension $n$ and since $B_s$ is not empty; it is easy to see that the dimension of $B_s$ is also $n$. Moreover, we have $W_y=YB_s$ and since $Y$ is non-singular, we have $\dim(W_y)=n$.

Now denote by $B_y$ the set of centric $y$-vectors. By \eqref{analytic center}, we have $A^{\top}y=0$. The dimension of the null space of $A^{\top}$ is $(n-m)$. In addition, we have to consider the restriction $e^{\top}w=1$; we have
$$
1=e^{\top}w=e^{\top}(Ys)=s^{\top}y=(b-Ax)^{\top}y=b^{\top}y-x^{\top}A^{\top}y=b^{\top}y.
$$
So, we have $b^{\top}y=1$ for centric $y$-vectors which reduces the dimension by one (since $b \notin \mathcal R(A)$), and $\dim(B_y)=m-n-1$. We have $W_s=SB_y$ and so by the same explanation $\dim(W_s)=m-n-1$.

We proved that $W_s$ and $W_y$ intersect at only a single point $w=Sy$, so $\dim(W_s \cap W_y)=0$. By using Lemma \ref{aff-2}-(2) the dimension of the smallest affine subspace containing $W_s$ and $W_y$ is
$$
\dim(W_s)+\dim(W_y)-\dim(W_s \cap W_y)=n+m-n-1=m-1.
$$
The dimension of  $\aff(W)$ is also $m-1$, so by Lemma \ref{W-l1} $\aff(W)$ is the least affine subspace containing $W_s$ and $W_y$.
\end{proof}

\section{} \label{appendix}

\begin{example} \label{CoE}
The statement of Proposition \ref{W-l3} is not true for a general concave function.
\end{example}
\begin{proof}
Consider the first example of Example \ref{Wsy}. We have $m=3$, $n=1$, \\ $A=[1,\ -1,\ -1]^{\top}$, and $b=[1, \ 0, \ 0]^{\top}$. Using \eqref{analytic center}, the set of centric $s$-vectors is
$$B_s=\{[1-z, \ \ z, \ \ z]^{\top}: z\in(0,1)\}.$$
The set of centric $y$-vectors, $B_y$, is specified by solving $A^{\top}y=0$ and $y^{\top}b=1$ while $y>0$ and we can see that $B_y=\{[1, \ \ z, \ \ 1-z]^{\top}: z\in(0,1)\}$. As shown in Figure \ref{Fig2}, $W_s$ regions are parallel line segments while $W_y$ regions are line segments that all intersect at $[1, \ 0, \ 0]^{\top}$.

Now, assume that the function $U(s)$ is as follows (does not depend on $s_3$)
\begin{eqnarray} \label{CoE-1}
U(s)=\left \{
\begin{array}{llr}
3s_1-s_2, \ \ & \textup{if} \ \  & s_1 \leq s_2; \\
-s_1+3s_2, \ \ & \textup{if} \ \ & s_1 > s_2. \\
\end{array}
\right.
\end{eqnarray}
This function is piecewise linear and it is easy to see that it is concave. $U(s)$ is also differentiable at all the points except the points $s_1=s_2$. At any point that the function is differentiable, the supergradient is equal to the gradient of the function at that point. Hence, we have $\partial U(s)= \{[3, \ -1, \  0]^{\top}\}$ for $s_1 < s_2$ and $\partial U(s)= \{[-1, \ 3, \ \ 0]^{\top}\}$ for $s_1 > s_2$.

If we consider $U(s)$ on $B_s$, we can see that the maximum of the function is attained at the point that $s_1=s_2$, so \mbox{$s_{opt}=[1/2, \ 1/2,\ 1/2]^{\top}$}. Now assume that we start at \mbox{$w^0=S^0y^0=[0.4, \ 0.1, \ 0.5]^{\top}$}. Because we have $y_1=1$ for all centric $y$-vectors, $w^0_1=s^0_1$, and we can easily find $s^0$ and $y^0$ as \mbox{$s^0=[0.4, \ 0.6, \ 0.6 ]^{\top}$} and $y^0=[1, \ 1/6, \ 5/6]^{\top}$. The hyperplane passing through $w^0$ is \\ \mbox{$(g^0)^{\top}(Y^0)^{-1}(w-w^0)=0$} and since $s^0_1 < s^0_2$ we have
\begin{eqnarray} \label{CoE-2}
(g^0)^{\top}(Y^0)^{-1}=[3, \ -1, \ \ 0](Y^0)^{-1}=[3, \ -6, \ \  0],
\end{eqnarray}
and we can write the hyperplane as $3(w_1-0.4)-6(w_2-0.1)=0$. In the next step, we have to choose a point $w^1$ such that
$(g^0)^{\top}(Y^0)^{-1}(w^1-w^0) \geq 0$. Let us pick $w^1=[0.6, \ 0.19, \ 0.21]^{\top}$ for which we can easily find $s^1=[0.6, \ 0.4, \ 0.4]^{\top}$ and $y^1=[1, \ 0.475, \ 0.525]^{\top}$. For this point we have $s^1_1 > s^2_2$, so $(g^1)^{\top}(Y^1)^{-1}=[-1, \ 6.32, \  0]^{\top}$ and the hyperplane passing through $w^1$ is \mbox{$-(w_1-0.6)+6.32(w_2-0.19)=0$}. The intersection of two hyperplanes on the simplex can be found by solving the following system of equations:
\begin{eqnarray} \label{CoE-3}
\left \{
\begin{array}{l}
3w_1-6w_2=0.6 \\
-w_1-6w_2=0.6 \\
w_1+w_2+w_3=1 \\
\end{array}
\right. \Rightarrow  \ w^*=\left [
\begin{array}{l}
0.57 \\
0.185 \\
0.245 \\
\end{array}
\right].
\end{eqnarray}
The intersection of simplex and the hyperplanes $(g^0)^{\top}(Y^0)^{-1}(w-w^0)=0$ and $(g^1)^{\top}(Y^1)^{-1}(w-w^1)=0$ are shown in Figure \ref{Fig5}. The intersection of simplex with \\ \mbox{ $\{w: \ (g^0)^{\top}(Y^0)^{-1}(w-w^0)\geq 0, \ (g^1)^{\top}(Y^1)^{-1}(w-w^1)\geq 0 \}$} is shown by hatching lines. As can be seen, we have:
$$
\left \{w: \ (g^{0w})^{\top}(w-w^0) \geq 0, \ (g^{1w})^{\top}(w-w^1) \geq 0  \right \} \cap W_{s_{op}}  = \ \phi.
$$

\begin{figure}[ht]
\centering
\includegraphics[height=3in, width = 3.1in]{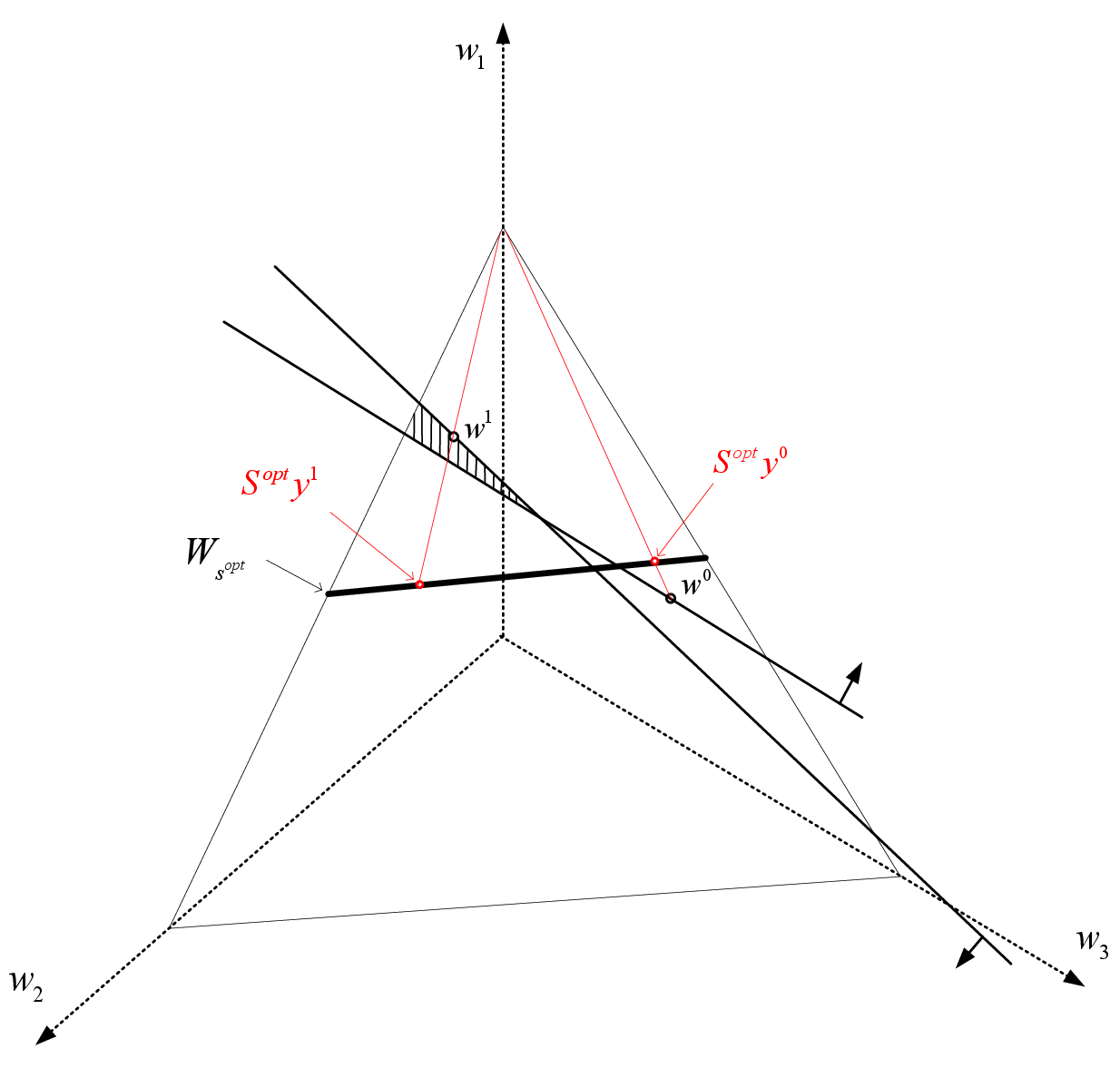}
\caption {Intersection of simplex and the hyperplanes $(g^0)^{\top}(Y^0)^{-1}(w-w^0)=0$ and $(g^1)^{\top}(Y^1)^{-1}(w-w^1)=0$ in Example \ref{CoE}.}
\label{Fig5}
\end{figure}
\end{proof}

\end{document}